\documentclass[reqno]{amsart}

\usepackage{graphicx,pdfsync,amssymb, latexsym,amsfonts,amsbsy,color,esint,mathtools,nicefrac,cancel,bbm,tikz, caption, subcaption}
\usepackage[alphabetic,nobysame]{amsrefs}
\usepackage[shortlabels]{enumitem}

\usepackage{hyperref}
\hypersetup{
colorlinks=true,
linkcolor=blue,
filecolor=blue,      
urlcolor=blue,
citecolor=blue,
}
\newtheorem{theorem}{Theorem}[section]
\newtheorem{lemma}[theorem]{Lemma}
\newtheorem{proposition}[theorem]{Proposition}
\newtheorem{definition}[theorem]{Definition}

\newtheorem{remark}[theorem]{Remark}

\numberwithin{equation}{section}

\usepackage{times}

\let\div\relax
\DeclareMathOperator\div{div}

\DeclareMathOperator\Id{Id}
\DeclareMathOperator\supp{supp}

\newcommand\newD{\tilde{\mathcal{D}}}

\title[Blowup for a flow with passive tracer]{Instantaneous blowup of incompressible flow with passive tracer}

\author [Mimi Dai]{Mimi Dai}

\address{Department of Mathematics, Statistics and Computer Science, University of Illinois at Chicago, Chicago, IL 60607, USA}
\email{mdai@uic.edu} 

\author [Xiaotong ``Dawson" Yang]{Xiaotong ``Dawson" Yang}

\address{Department of Mathematics, Statistics and Computer Science, University of Illinois at Chicago, Chicago, IL 60607, USA}
\email{xyang212@uic.edu} 

\thanks{The authors are grateful for the support of the NSF grant DMS--2308208. M.Dai also acknowledges the support from Simons Foundation. }

\begin{document}

\begin{abstract}
We construct a family of solutions $(u,b)$ of the incompressible flow with a passive tracer for which both $\|u(t)\|_{L^\infty}$ and $\|b(t)\|_{L^\infty}$ blow up at time $T_*$. Away from $T_*$, the solutions remain smooth in both space and time. The argument adapts the inverse cascade mechanism from \cite{CDP} to the presence of an advected scalar, but the passive component creates a new compatibility constraint: the iteration must propagate the tracer while preserving the same principal velocity profiles from one stage to the next. We resolve it by introducing a simultaneous decomposition lemma for a symmetric tensor and a vector field.

We also show the existence of an infinitely family of instantaneous blowup solutions to the 2D MHD system, with critical blowup rate for the velocity component according to the scaling of the system. Moreover, the non-uniqueness is sharp in the sense that it occurs in spaces borderline to $L^2_tL_x^\infty$, the endpoint space of the Ladyzhenskaya--Prodi--Serrin type where uniqueness is known.


\end{abstract}

\maketitle

\begingroup
\setlength{\parskip}{0pt}
\tableofcontents
\endgroup

\section{Introduction}

We consider the system of incompressible flow with a passive tracer 
\begin{equation}\label{eq-main}
\begin{split}
\partial_t u -   \Delta u + \div( u \otimes u) + \nabla p &= 0, \\
\partial_t b -   \Delta b + (u\cdot\nabla)b  &= 0, \\
\div u  & =0
\end{split}
\end{equation}
on $\mathbb T^d\times [0,T]$ or $\mathbb R^d\times [0,T]$ with $d\geq 2$ and some $T>0$. In the coupled system \eqref{eq-main}, $u$ is the velocity field and $p$ is the pressure function; while $b$ is the scalar-valued tracer function. 


Interestingly, a particular context of the incompressible magnetohydrodynamic (MHD) system can be viewed as a system of a flow with passive tracer. 
The classical incompressible MHD system is governed by
\begin{equation*}
\begin{split}
\partial_t u -   \Delta u + \div( u \otimes u-B \otimes B) + \nabla p &= 0, \\
\partial_t B -   \Delta B + \div( u \otimes B-B \otimes u)  &= 0, \\
\div u = 0 & =0.
\end{split}
\end{equation*}
Assume in particular
\[u=(u^1(x_1,x_2), u^2(x_1,x_2), 0), \qquad B=(0,0,b(x_1,x_2)),\]
which implies
\[\div (B \otimes B)=0, \qquad \div( u \otimes B-B \otimes u)=(u\cdot\nabla)b.\]
Therefore the MHD system above reduces to \eqref{eq-main} in the 2D case, which is sometimes referred as two and half dimension (2.5D) MHD (cf.\cite{BLFNL}). 

Moreover, the 2D MHD with $u=(u_1(x_1,x_2), u_2(x_1,x_2))$ and $B=(B_1(x_1,x_2), B_2(x_1,x_2))$ can be rewritten into
\begin{equation}\label{mhd-2d}
\begin{split}
\partial_t u -   \Delta u + \div( u \otimes u-B \otimes B) + \nabla p &= 0, \\
\partial_t b -  \Delta b + u\cdot\nabla b &= 0, \\
\div u  =0, \ B&=\nabla^\perp b,
\end{split}
\end{equation}
with $b=b(x_1,x_2)$ being a stream function for the magnetic field $B$. We note that \eqref{mhd-2d} is in the same form of \eqref{eq-main} with an additional forcing term $\div(B \otimes B)$.


It is known that for the first equation in \eqref{eq-main} - the Navier-Stokes equation, the existence of global classical solution remains an open problem in dimensions $d\geq 3$. Thus as a coupled system, it is also not known whether a classical solution to \eqref{eq-main} or \eqref{mhd-2d} would blow up at a finite time or not. It is therefore necessary to introduce the standard notion of weak solutions for \eqref{eq-main}. A similar definition of weak solutions for \eqref{mhd-2d} can be given as below.

\begin{definition}\label{def:weak_solutions}
Let $\mathcal{D}_T$ denote the class of pairs
\[
(\varphi,\tilde\varphi)\in C^\infty(\mathbb T^d\times\mathbb R;\mathbb R^d)\times C^\infty(\mathbb T^d\times\mathbb R)
\]
such that $\div\varphi=0$ and $\varphi(t)=\tilde\varphi(t)=0$ for $t\ge T$.
Given weakly divergence-free $u_0\in L^2(\mathbb T^d)$ and $b_0\in L^2(\mathbb T^d)$, we say that $(u,b)$ is a weak solution of \eqref{eq-main} with initial data $(u_0,b_0)$ provided that $u(t)$ is weakly divergence-free for a.e. $t\in [0,T]$ and 
\begin{equation*}
\begin{split}
\int_{\mathbb{T}^d} u_0(x)\cdot \varphi(x,0) \, dx &= - \int_0^T \int_{\mathbb{T}^d} u\cdot \big(  \partial_t \varphi+ \Delta \varphi +  u \cdot \nabla \varphi  \big) \, dx dt,\\ 
\int_{\mathbb{T}^d} b_0(x)\tilde \varphi(x,0) \, dx &= - \int_0^T \int_{\mathbb{T}^d} b\big(  \partial_t \tilde\varphi+ \Delta \tilde\varphi +  u \cdot \nabla \tilde\varphi  \big)\, dx dt
\end{split}
\end{equation*}
for every $(\varphi,\tilde\varphi)\in \mathcal{D}_T$.

\end{definition}

Equivalently, a weak solution of \eqref{eq-main} satisfies the Duhamel representation
\[
\begin{split}
u(t) &= e^{t\Delta}u_0 - \int_0^t e^{(t-s)\Delta} \mathbb{P}\div(u\otimes u) (s)\, ds,\\
b(t) &= e^{t\Delta}b_0 - \int_0^t e^{(t-s)\Delta} \div(u b) (s)\, ds
\end{split}
\]
in the sense of distributions; analogously, a weak solution of \eqref{mhd-2d} also satisfies
\[
\begin{split}
u(t) &= e^{t\Delta}u_0 - \int_0^t e^{(t-s)\Delta} \mathbb{P}\div(u\otimes u-\nabla^\perp b\otimes \nabla^\perp b) (s)\, ds,\\
b(t) &= e^{t\Delta}b_0 - \int_0^t e^{(t-s)\Delta} \div(u b) (s)\, ds.
\end{split}
\]

The scaling of \eqref{eq-main} is inherited from the Navier--Stokes equation in the velocity component, while the passive scalar allows an additional homogeneity parameter. More precisely, if $(u,p,b)(x,t)$ solves \eqref{eq-main}, then for every $\lambda>0$ the rescaled fields
\[u_\lambda(x,t)=\lambda u(\lambda x,\lambda^2t), \quad p_\lambda(x,t)=\lambda^2 p(\lambda x,\lambda^2t),\]
\[b_\lambda(x,t)=\lambda^{\mathfrak{a}} b(\lambda x,\lambda^2t), \quad \forall \mathfrak{a}\in\mathbb R,\]
again solve the same system with the corresponding rescaled initial data. The velocity scaling gives a family of critical spaces
\begin{equation}
\label{eq:critical_spaces}\dot H^{\frac d2-1}\subset L^d\subset L^{d,\infty}\subset \dot W^{-1, \infty}\subset BMO^{-1} \subset \dot B^{-1}_{\infty,\infty}.
\end{equation}
The same scaling leaves invariant the Ladyzhenskaya--Prodi--Serrin classes $L^p_tL^q_x$ satisfying $\frac2p + \frac{d}{q}=1$ with $q>d$. The freedom in choosing $\mathfrak{a}$ reflects that $b$ evolves by a linear advection-diffusion equation once $u$ is fixed. The choice $\mathfrak{a}=1$ recovers the scaling of the MHD system.

It is obvious that the 2D MHD system \eqref{mhd-2d} has the scaling
\[u_\lambda(x,t)=\lambda u(\lambda x,\lambda^2t), \quad B_\lambda(x,t)=\lambda B(\lambda x,\lambda^2t), \quad p_\lambda(x,t)=\lambda^2 p(\lambda x,\lambda^2t),\]
\[b_\lambda(x,t)=b(\lambda x,\lambda^2t).\]


We now formulate the main results for \eqref{eq-main}.

\begin{theorem} \label{main-thm}
Let $u_0\in C^\infty(\mathbb T^2)$ be divergence-free and let $b_0\in C^\infty(\mathbb T^2)$. Then there exists $T>0$ with the following property: for every $T_*\in[0,T)$, one can find a weak solution $(u(t), b(t))$ of \eqref{eq-main} on $\mathbb T^2\times[0,T]$ such that:
\begin{enumerate}[1.,ref=\arabic*,left=1em]
    \item On the interval $[0,T_*]$, the pair $(u(t), b(t))$ is a classical solution of \eqref{eq-main} with initial data $(u(0), b(0))=(u_0,b_0)$;\label{blow_up_theorem_classical_1}
    \item On $(T_*,T]$, the pair $(u(t), b(t))$ is again a classical solution of \eqref{eq-main} and there exists $c>0$ such that
    \[
    \|u(t_n)\|_{L^\infty} \geq \frac{c}{\sqrt{t_n-T_*}}, \qquad  \|\nabla u(t_n)\|_{L^\infty} \geq \frac{c}{t_n-T_*},
    \]
     \[
    \|b(t_n)\|_{L^\infty} \geq \frac{c}{\sqrt{t_n-T_*}}, \qquad  \|\nabla b(t_n)\|_{L^\infty} \geq \frac{c}{t_n-T_*},
    \]
    along a sequence of times $t_n \to T_*+$;\label{blow_up_theorem_classical_2_with_lower_bound}
    moreover, there is another constant $C>0$ such that
    \[
    \|u(t)\|_{L^\infty}+\|b(t)\|_{L^\infty}  \leq \frac{C}{\sqrt{t-T_*}}, \quad  \|\nabla u(t)\|_{L^\infty} + \|\nabla b(t)\|_{L^\infty}\leq \frac{C}{t-T_*}
    \]
    for all $t\in(T_*,T]$; \label{blow_up_theorem_type_i}
    \item The trajectory $(u(t), b(t))$ is weak-* continuous in time with values in $BMO^{-1}\times BMO^{-1}$;\label{blow_up_theorem_continuity}
    \item The solution also satisfies
    \begin{equation}
    u, b\in L^2([0,T];L^p), \quad \forall p<\infty.
\end{equation}
\label{blow_up_theorem_prodi_serrin}
\end{enumerate}
\end{theorem}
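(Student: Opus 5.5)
Following the inverse cascade scheme of \cite{CDP}, we run the construction backwards in time from the singular time, so that the blowup is instantaneous at $T_*$. On $[0,T_*]$ we take the unique local classical solution of \eqref{eq-main} from the smooth data $(u_0,b_0)$; $T>0$ is chosen below the local existence time. The task is then to build, on $(T_*,T]$, a classical solution whose data at $T_*$ are $(u(T_*),b(T_*))$ in the weak sense. It should also carry an infinite sequence of increasingly concentrated, self-similarly scaled profiles as $t\to T_*^+$.

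\textbf{Ansatz.} We write the solution on $(T_*,T]$ as a sum of stages. Stage $q$ is active on a time window of length $\sim\tau_q$ with $\tau_q\to0$ geometrically, and lives at spatial frequency $\lambda_q\sim\tau_q^{-1/2}$. Its velocity amplitude is $\sim\lambda_q$ and, using the freedom $\mathfrak a=1$ in the tracer scaling, its tracer amplitude is also $\sim\lambda_q$. These are exactly the Type~I rates: $\|u\|_{L^\infty},\|b\|_{L^\infty}\sim (t-T_*)^{-1/2}$ and gradients $\sim (t-T_*)^{-1}$.

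Going forward in time, each stage is a principal shear/laminar profile $(W_q,\Theta_q)$ at frequency $\lambda_q$. Its self-interaction $\div(W_q\otimes W_q)$ and $\div(W_q\Theta_q)$ produces low-frequency outputs, a symmetric stress $R_q$ and a flux vector $F_q$. These outputs must be exactly cancelled by the time derivative of the next, coarser stage. The iteration is run from $q=\infty$ down, which is why it is an inverse cascade. The errors (linear terms, interactions between different stages, the diffusive decay of each profile) are absorbed via the Duhamel formulation. Concretely, we set up a fixed-point problem for a remainder $(v,\beta)$ in a weighted space with norm $\sup_t\big(\sqrt{t-T_*}\,\|\cdot\|_{L^\infty}+(t-T_*)\|\nabla\cdot\|_{L^\infty}\big)$ with small constant, and close it by contraction on a short interval.

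\textbf{Main obstacle.} For the tracer, the coarse stage must simultaneously absorb the stress $R_q$ and the tracer flux $F_q$ using the \emph{same} principal velocity directions, so that the velocity profile family is preserved stage to stage. This requires a simultaneous decomposition lemma: for $(R,F)$ in a neighborhood of a fixed reference pair $(R_0,F_0)$, we want
\[
R=\sum_k a_k^2\,\xi_k\otimes\xi_k,\qquad F=\sum_k a_k c_k\,\xi_k,
\]
with fixed directions $\xi_k$ and smooth coefficients $a_k(R,F)>0$, $c_k(R,F)$. I would prove it via the implicit function theorem. First pick enough directions (three or more in 2D) so that the linear map $(a_k^2)\mapsto R$ is onto symmetric matrices with positive coefficients at $R_0$. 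Then solve for $c_k$ in $\sum a_kc_k\xi_k=F$, which is underdetermined linear and solvable once the $\xi_k$ span. Normalizing so that the reference data stay in the lemma's neighborhood at every stage forces the specific choice of amplitudes above.

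\textbf{Remaining properties.} The lower bounds along $t_n$ are read off at the centers of the stage windows, where the principal profile dominates the small remainder. Weak-* continuity into $BMO^{-1}$ follows because each stage is the divergence of a uniformly bounded $L^\infty$ field with frequency $\lambda_q\to\infty$, and so tends to $0$ weak-* as $t\to T_*$. The $L^2_tL^p$ bound follows from the spatial concentration of the profiles: each stage has support, or effective intermittency, of measure $\sim\lambda_q^{-\gamma}$, giving $\|u(t)\|_{L^p}\lesssim (t-T_*)^{-1/2+\epsilon(p)}$, which is square integrable in time. In the $L^\infty$ endpoint this integrability fails logarithmically, consistent with sharpness.
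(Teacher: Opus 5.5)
Your overall architecture matches the paper: the classical solution up to $T_*$, an inverse cascade of stages switched on after $T_*$, a coupled tensor--vector decomposition, and a corrector. However, the step that closes the argument fails as you describe it.

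\textbf{The corrector cannot be closed by contraction.} Write $(w,\beta)$ for your remainder and $(v,h)$ for the principal stages. The remainder equation contains the linear terms $\mathbb P\div(2v\odot w)$ and $\div(v\beta+wh)$, whose coefficients are the principal profiles themselves. These have exactly critical size: $\sqrt t\,\|v(t)\|_{L^\infty}$ is of order one along a sequence $t\to0$, since that is the blowup. So shrinking the interval gains nothing. In your critical weight the map $w\mapsto\int_0^te^{(t-s)\Delta}\mathbb P\div(2v\odot w)\,ds$ is not small; the naive bound $\int_0^t(t-s)^{-1/2}s^{-1}\,ds$ even diverges at $s=0$. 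No smallness of the remainder turns this into a contraction.

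The missing idea is how the paper treats exactly these terms. They are kept inside the linearized propagator $\mathbb S(t,t')$ around $(U^{1/N_0}+v,\,H^{1/N_0}+h)$. Proposition~\ref{prop-semi} bounds this propagator by a fractional Gr\"onwall argument fed by the sparse-scale estimates \eqref{v-critical-bounds}--\eqref{h-critical-bounds}, namely $\|v\|_{L^1([t',t],t^{-1/2}dt;L^\infty)}\lesssim1+(\log A)^{-1}\log(t/t')$. This costs only a factor $(t/t')^{\epsilon}$ with $\epsilon\to0$ as $A\to\infty$. That loss is absorbed by running the contraction in the slightly subcritical space \eqref{def-X} with weight $t^{(1-\alpha)/2}$, $\alpha>\epsilon$. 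Only the quadratic terms and the small source terms are treated perturbatively.

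This in turn requires every error left by the principal part to be subcritical, $\lesssim\varepsilon_0t^{-1+\alpha}$ as in \eqref{f_bound}, including the cross-stage products $v_{k_1}\otimes v_{k_2}$. That is why the paper's scales are super-exponential, $N_{j,k}\simeq A^{b^{k+(j-1)/J}}$. With geometrically shrinking windows, adjacent stages interact at $t\simeq\lambda_{q+1}^{-2}$ with size $(\lambda_q/\lambda_{q+1})\,t^{-1}$. That is a small constant times the critical rate, not a power gain in $t$, and such a forcing does not even lie in $\mathcal Y$.

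\textbf{Items 3, 4 and the lower bound for $b$.} These need ingredients your ansatz lacks.

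Laminar shear stages with full support have $\|v_q\|_{L^2_tL^p}\simeq\|v_q\|_{L^2_tL^\infty}\simeq1$ for every $q$. Since they dominate essentially disjoint time windows, their sum is not in $L^2_tL^p$. Likewise, a per-stage $L^\infty$ bound gives no uniform $BMO^{-1}$ bound for the infinite sum. In the paper the localization is built into the amplitude recursion. The pipe cutoffs $\varphi_{j,k}$ and the cutoffs $\chi_k$ confine stage $k$ to nested sets $\Omega_k$ with $|\Omega_k|\le2^{-k}$ and the cube bound \eqref{cube_intersection_volume_bound}. Via \eqref{vk-pointwise-bounds} this yields $\|v_k\|_{L^2_tL^p}\lesssim2^{-k/p}+N_{1,k+1}^{-\alpha}$, which is summable in $k$, and it is also what controls the infinite sum in $BMO^{-1}$.

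The gain is a fixed factor per stage, not a power of $t$. Your claimed $\|u(t)\|_{L^p}\lesssim(t-T_*)^{-1/2+\epsilon(p)}$ would need ever thinner concentration. But the self-interaction of stage $q$ must have mean of order one on the support of stage $q-1$; this is the normalization that $\varphi_{j,k}^2\sin^2(N_{j,k}\eta^j\cdot x)$ has mean one. Thinner supports therefore force the amplitudes to grow and break the Type~I upper bound.

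\textbf{Your decomposition lemma.} Using free tracer amplitudes $c_k$ is a valid and simpler alternative to Lemma~\ref{le-geometry-TV}, but it does not by itself keep the tracer alive. To stay in the range of the geometric lemma you decompose $\mu^{-1}(\Id+\mu S_q)$ with a small constant $\mu$ (the $c$ of the amplitude construction), with the constant part going into the pressure. Then $a_k\simeq\mu^{-1/2}$ and $R^{-1}\simeq\mu$.

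The minimal-norm choice $c_k=a_k\xi_k\cdot R^{-1}F_q$ gives $|c^{(q+1)}|\lesssim\mu^{1/2}|F_q|\lesssim\mu^{1/2}|c^{(q)}|$. So the tracer amplitudes decay geometrically along the cascade, and $\|b(t_n)\|_{L^\infty}\gtrsim(t_n-T_*)^{-1/2}$ fails. You would have to add, at every stage, a smooth unit-size component in the kernel of $(c_k)\mapsto\sum_ka_kc_k\xi_k$.

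The paper avoids this by freezing the tracer amplitude at $1$ and imposing the flux constraint on the same velocity amplitudes. It uses $\pm\xi$ pairs, which cancel in the vector and add in the tensor. It also uses $\pm e_i$ pairs with $\alpha_i-\beta_i=\pm g_i$ and $\alpha_i^2+\beta_i^2=M(g)$, sending the excess $M(g)\Id$ into the pressure.
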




\begin{theorem}\label{thm-nonunique-2d}
There exists an infinite family of such instantaneous blowup solutions given in Theorem \ref{main-thm} from the same initial data. 
\end{theorem}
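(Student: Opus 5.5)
Theorem \ref{thm-nonunique-2d} follows from Theorem \ref{main-thm} by exploiting the free parameter $T_*\in[0,T)$. Fix smooth divergence-free $u_0$ and smooth $b_0$, and let $T>0$ be the time given by Theorem \ref{main-thm}. For each $T_*\in[0,T)$, Theorem \ref{main-thm} produces a weak solution $(u^{T_*},b^{T_*})$ on $[0,T]$ with initial data $(u_0,b_0)$. This gives a family indexed by the uncountable set $[0,T)$. It remains to show that distinct indices give distinct solutions.

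Take $T_1<T_2$ in $[0,T)$. By item \ref{blow_up_theorem_classical_1}, $(u^{T_2},b^{T_2})$ is a classical solution on $[0,T_2]$. Hence $\|u^{T_2}(t)\|_{L^\infty}$ is bounded on the compact interval $[T_1,(T_1+T_2)/2]$; I assume "classical" includes continuity of $u$ and $b$ up to and including $t=T_2$. By item \ref{blow_up_theorem_classical_2_with_lower_bound}, there is a sequence $t_n\downarrow T_1$ with
\[
\|u^{T_1}(t_n)\|_{L^\infty}\ge c\,(t_n-T_1)^{-1/2}\to\infty .
\]
For $n$ large we have $t_n<(T_1+T_2)/2$, so $u^{T_1}(t_n)\ne u^{T_2}(t_n)$. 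Thus the two solutions differ on a set of positive measure near $T_1$, since both are smooth on $(T_1,T_2)$ and the inequality propagates by continuity. The same argument applies to the $b$ component. The map $T_*\mapsto(u^{T_*},b^{T_*})$ is therefore injective, and this gives infinitely many distinct weak solutions from the same data. Each of them blows up instantaneously after leaving the classical solution at its own time $T_*$.

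A further point is that all these solutions agree with the unique classical solution up to their respective $T_*$; this is consistent with item \ref{blow_up_theorem_classical_1}. If one also wants infinitely many instantaneous blowup solutions with the same blowup time (for instance $T_*=0$), I would go back to the construction behind Theorem \ref{main-thm}. There one chooses the free parameters of the inverse cascade differently: the amplitudes or the orientation of the principal profiles in the simultaneous decomposition lemma. These choices produce distinct self-similar-type profiles. Distinctness is then checked through the leading-order term of $\|u(t_n)\|_{L^\infty}\sqrt{t_n-T_*}$, which is determined by these parameters.

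The only step needing care is this distinctness check. The main obstacle is ensuring that the lower bound along $t_n$ is genuinely incompatible with the other solution being bounded there. Using the classical regularity on the closed interval $[0,T_2]$ from item \ref{blow_up_theorem_classical_1} resolves it.
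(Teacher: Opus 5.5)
Your argument proves the statement as literally worded. For $T_1<T_2$, item~\ref{blow_up_theorem_classical_1} makes $(u^{T_2},b^{T_2})$ bounded on $\mathbb T^2\times[T_1,\tfrac{T_1+T_2}{2}]$. You do not need continuity at $t=T_2$, since this interval lies inside $[0,T_2)$. Item~\ref{blow_up_theorem_classical_2_with_lower_bound} makes $(u^{T_1},b^{T_1})$ unbounded as $t\downarrow T_1$, and continuity of both on $\mathbb T^2\times(T_1,T_2)$ turns a pointwise difference into one on a set of positive measure.

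This is, however, a different and much weaker route than the paper's. Your family comes for free from the quantifier ``for every $T_*$'' in Theorem~\ref{main-thm}. Its members differ only in their blowup times, and each coincides with the classical solution up to its own $T_*$. For any fixed $T_*$ you still have only one solution. In particular, among solutions smooth on $(0,T]$ you get only the classical one and the $T_*=0$ one.

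The paper gives no proof here; it defers to \cite{CDP}. At the start of Section~\ref{sec:corrector} it announces that the family $(u^{(\sigma)},b^{(\sigma)})$ arises by modulating the frequency scale $N_0$ in $v^{N_0},h^{N_0}$. That produces many solutions at a \emph{fixed} $T_*$, which is the form of non-uniqueness stated in Theorem~\ref{thm-2dmhd}.

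In that approach distinctness has to come from the construction itself. Heuristically:
\begin{itemize}
\item $\sqrt{t-T_*}\,\|v^{N_0}(t)\|_{L^\infty}$ is of order one only near $t-T_*\approx(N_0N_{j,k})^{-2}$ and is small in the gaps between these super-lacunary times;
\item $\sqrt{t-T_*}\,\|w^{N_0}(t)\|_{L^\infty}\to0$ by the $X$-bound.
\end{itemize}
So suitably separated values of $N_0$ should give different solutions with the same data and the same blowup time.

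Your third paragraph is where this stronger statement would have to be proved, and it does not prove it. You do not say which parameter of the amplitude recursion is actually free. You do not check that a modified choice still satisfies \eqref{a-convex-integration-identity} and the bounds of Section~\ref{sec:principal}. You also do not compute the leading coefficient of $\sqrt{t_n-T_*}\,\|u(t_n)\|_{L^\infty}$ that you propose to compare. If you want infinitely many solutions at a fixed $T_*$, vary $N_0$ and compare the locations of the peaks rather than constants.
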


Moreover, the results hold in 3D and higher dimensions. Specifically, we have
\begin{theorem} \label{thm-3d}
For any $u_0\in C^\infty(\mathbb T^3)$ with $\div u_0=0$ and $b_0\in C^\infty(\mathbb T^3)$, there exist $T>0$ and infinitely many weak solutions $(u(t), b(t))$ of \eqref{eq-main} on $\mathbb T^3\times[0,T]$, such that for any $T_*\in[0,T)$, the weak solutions $(u(t), b(t))$ satisfy all the properties of Theorem \ref{main-thm}. In addition, $u,b\in L_t^\infty \dot W^{-1,\infty}_x$. 
\end{theorem}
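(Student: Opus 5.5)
The plan is to run the 2D construction behind Theorem \ref{main-thm} in three dimensions, embedding its building blocks so that the extra regularity $L^\infty_t\dot W^{-1,\infty}_x$ comes out for free. Following the inverse cascade scheme of \cite{CDP}, we look for the solution on $(T_*,T]$ as a sum $u=\sum_{n\ge 0}u_n$, $b=\sum_{n\ge0} b_n$ over dyadic time shells $t-T_*\sim \lambda_n^{-2}$, where $\lambda_n\to\infty$. Each $u_n$ is a self-similar profile of the form $\lambda_n U(\lambda_n x,\lambda_n^2(t-T_*))$, a shear/Kolmogorov-type flow living at frequency $\lambda_n$; each $b_n$ is a companion profile with $\mathfrak a=1$. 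Going backward in time, the stage at frequency $\lambda_{n+1}$ is designed so that its quadratic self-interaction $\div(u_{n+1}\otimes u_{n+1})$, together with the tracer flux $\div(u_{n+1}b_{n+1})$, produces exactly the low-frequency stress $R_n$ and flux $r_n$ that the stage-$n$ profile requires. To arrange this we use the simultaneous decomposition lemma for a symmetric tensor and a vector field: it writes $R_n=\sum_k a_k^2\,\xi_k\otimes\xi_k$ and $r_n=\sum_k a_k c_k\,\xi_k$ using a common finite set of directions $\xi_k\in\mathbb Z^3$. In 3D we apply the version of that lemma for $3\times3$ symmetric tensors near a multiple of the identity, choosing a sufficiently large family of integer directions. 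The amplitudes $a_k$ and $c_k$ are obtained from the decomposition lemma by the implicit function theorem.

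The steps are as follows.
\begin{enumerate}
\item Solve the smooth problem from $(u_0,b_0)$ on $[0,T_*]$ classically, and take $T$ small, as in the 2D case.
\item Build the building blocks $u_n=\sum_k a_k\lambda_n\,\eta_k\sin(\lambda_n\xi_k\cdot x)$ with $\eta_k\perp\xi_k$, and $b_n=\sum_k c_k\lambda_n\cos(\lambda_n\xi_k\cdot x)$, each damped by $e^{-\lambda_n^2|\xi_k|^2 (t-T_*)}$. Every single mode is a stationary solution of the Euler/transport part, so only the cross interactions generate errors.
\item Estimate the errors: the cross-frequency terms between distinct $k$ and between distinct $n$, and the heat-flow mismatch. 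Choosing $\lambda_{n+1}/\lambda_n$ huge makes these summable in the critical norms, and the residual is absorbed by a fixed-point argument in a Koch--Tataru-type space $X$ adapted to the singular time $T_*$.
\item Read off the bounds. Since $\|u_n(t)\|_{L^\infty}\sim\lambda_n$ at $t-T_*\sim\lambda_n^{-2}$, and similarly for $b_n$, we get the upper and lower blowup rates, the $BMO^{-1}$ weak-$*$ continuity, and $L^2_tL^p_x$ for all $p<\infty$.
\end{enumerate}

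For the additional claim, note that $u_n=\nabla\cdot(\lambda_n^{-1}\Phi_n)$ with $\Phi_n$ bounded uniformly in $n$, so $\|u_n(t)\|_{\dot W^{-1,\infty}}\lesssim 1$. Because the frequencies are lacunary and each stage is temporally localized, the sum stays bounded in $\dot W^{-1,\infty}$ uniformly in $t$; the same holds for $b$. The fixed-point correction is placed in a space embedding into $L^\infty_t\dot W^{-1,\infty}$, for instance via the bound $\sup_t t^{1/2}\|w\|_{L^\infty}$ together with a Duhamel estimate for $e^{(t-s)\Delta}\mathbb P\div$.

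Infinitely many solutions, as in Theorem \ref{thm-nonunique-2d}, come from the free parameters of the construction: the choice of $T_*$, a global amplitude scale $\epsilon$ in a small open interval, or the choice of direction set. Each choice gives a different solution with the same data.

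The main obstacle is the 3D simultaneous decomposition. The inverse cascade must reproduce the same principal profile at every stage, so the tensor $R_n$ generated by the $\xi_k$-modes must again be decomposable along the same $\xi_k$, with a compatible tracer flux. Here I would first try to choose the direction set invariant under the cubic symmetry group. Then the self-generated stress is proportional to the identity plus a small perturbation, and the vector constraint $\sum a_kc_k\xi_k=r_n$ can be solved linearly in $c_k$ once the $a_k$ are fixed.
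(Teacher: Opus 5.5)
Your plan has genuine gaps; the paper omits the 3D proof and defers to the 2D construction above as adapted in \cite{Dai2026}, and the comparison below is against that construction.

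\textbf{The energy transfer in Step 2 does not work as written.} Suppose the amplitudes $a_k$ are constants, as your self-similar ansatz $u_n=\lambda_nU(\lambda_nx,\lambda_n^2(t-T_*))$ and the claim that every mode is a stationary Euler solution presuppose. Then the only Fourier mode of $u_{n+1}\otimes u_{n+1}$ below frequency $\lambda_{n+1}$ is the zero mode. That is a spatially constant matrix with vanishing divergence, so nothing is transferred to frequency $\lambda_n$.

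The transfer requires the level-$(k+1)$ amplitudes to be modulated by the level-$k$ profile: $a_{j,k+1}=c^{-1/2}\chi_{k+1}\Gamma_j(\Id+cS_k)$, with $S_k=2\mathcal D\sum_jN_{j,k}\psi_{j,k}$ an $O(1)$ function oscillating at frequency $N_{j,k}$. The low-frequency part of the \emph{same-direction} self-interaction, integrated in time against $N_{j,k+1}^2e^{-2N_{j,k+1}^2s}$, is then exactly what produces $\bar v_k$ (see \eqref{a-convex-integration-identity} and Proposition~\ref{barv-I_estimate_proposition}). Such modulated waves are neither self-similar nor stationary. They are not exactly divergence-free either, which is why the paper builds them from potentials $\psi_{j,k}$ and works with $\mathbb P$ and $\mathbb Q$.

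So your sentence ``only the cross interactions generate errors'' has the roles reversed: self-interactions are the main term, and cross-direction interactions are the error. The paper makes the latter small by placing the directions within one level at separated frequencies $N_{1,k}\ll\dots\ll N_{J,k}$, which yields the factor $N_{j',k}/N_{j,k}$ in \eqref{scale-separation}. With all directions at the same $\lambda_n$ and plain sine waves in 3D (not Beltrami waves), the time-integrated cross stresses are of order one at frequency comparable to $\lambda_n$. They therefore produce velocity errors as large as $u_n$ itself, and making $\lambda_{n+1}/\lambda_n$ large does not help.

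\textbf{The tracer has two further problems.} First, your $b_n$ uses $\cos(\lambda_n\xi_k\cdot x)$ against $\sin(\lambda_n\xi_k\cdot x)$ in $u_n$. The same-direction product is then a multiple of $a_kc_k\eta_k\sin(2\lambda_n\xi_k\cdot x)$, which has no low-frequency part, so no flux $r_n$ is generated. This is exactly the degeneracy the paper identifies for genuine 2D MHD in Section~\ref{sec-ansatz-2dmhd}. For the passive tracer you need in-phase profiles as in \eqref{def_psi_b}--\eqref{def_psi_c}. The flux is then $\tfrac12a_kc_k\eta_k$, along the velocity direction $\eta_k$ rather than $\xi_k$; likewise the stress is $\sum_ka_k^2\eta_k\otimes\eta_k$, not $\sum_ka_k^2\xi_k\otimes\xi_k$.

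Once that is fixed, giving the tracer its own amplitude $c_k$ and solving $\sum_ka_kc_k\eta_k=r_n$ linearly looks like a legitimate simplification of Lemma~\ref{le-geometry-TV}. The paper needs that lemma because its tracer blocks \eqref{def_psi_b} carry no amplitude of their own, so flux and stress share the coefficients $a_{j,k,b}$.

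Second, your blocks are spatially homogeneous plane waves. Hence $\|u_n(t)\|_{L^p}\sim\|u_n(t)\|_{L^\infty}\sim\lambda_ne^{-\lambda_n^2t}$, and each stage contributes $\int\lambda_n^2e^{-2\lambda_n^2t}\,dt\sim1$ to $\|u\|_{L^2_tL^p}^2$. The sum over $n$ diverges, so item 4 of Theorem~\ref{main-thm} fails for both $u$ and $b$. The paper obtains that item from intermittency: level $k$ is supported in the nested pipe sets $\Omega_k$ with $|\Omega_k|\le2^{-k}$, through the cutoffs $\varphi_{j,k}$ and $\chi_k$. This gives $\|v_k\|_{L^2_tL^p}+\|h_k\|_{L^2_tL^p}\lesssim N_{1,k+1}^{-\alpha}+2^{-k/p}$. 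Lacunarity of the $\lambda_n$ alone cannot supply this.
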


For the 2D MHD system \eqref{mhd-2d}, we have 

\begin{theorem} \label{thm-2dmhd}
Let $u_0, B_0\in C^\infty(\mathbb T^2)$ with $\div u_0=\div B_0=0$. There exists $T>0$ with the following property: for every $T_*\in[0,T)$, there exist infinitely many weak solutions $(u(t), B(t))$ of \eqref{mhd-2d} on $\mathbb T^2\times[0,T]$ with non-trivial $B(t)$ such that:
\begin{enumerate}[1.,ref=\arabic*,left=1em]
    \item The pair $(u(t), B(t))$ is a classical solution of \eqref{mhd-2d} on $[0,T_*]$ and $(T_*,T]$ separately with initial data $(u(0), B(0))=(u_0,B_0)$; and $(u(t), B(t))$ is weak-* continuous in time with values in $BMO^{-1}\times BMO^{-1}$;
    \item On $(T_*,T]$, the pair $(u(t), B(t))$ satisfies
    \[
    \|u(t_n)\|_{L^\infty} \gtrsim \frac{1}{\sqrt{t_n-T_*}}, \qquad  \|\nabla u(t_n)\|_{L^\infty} \gtrsim \frac{1}{t_n-T_*},
    \]
    along a sequence of times $t_n \to T_*+$, and
    \[
    \|u(t)\|_{L^\infty}+\|B(t)\|_{L^\infty}  \lesssim \frac{1}{\sqrt{t-T_*}}, \quad  \|\nabla u(t)\|_{L^\infty} + \|\nabla B(t)\|_{L^\infty}\lesssim \frac{1}{t-T_*}
    \]
    for all $t\in(T_*,T]$; 
    \item Moreover,
    \begin{equation}\notag
    u, B\in L^2([0,T];L^p), \quad \forall p<\infty.
\end{equation}
\end{enumerate}
\end{theorem}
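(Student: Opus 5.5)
The plan is to reduce Theorem \ref{thm-2dmhd} to the construction behind Theorem \ref{main-thm}. The 2D MHD system \eqref{mhd-2d} is \eqref{eq-main} plus the extra forcing $\div(\nabla^\perp b\otimes\nabla^\perp b)$ in the momentum equation, and $B=\nabla^\perp b$. The natural scaling is therefore $\mathfrak a=0$ for $b$, since $B$ scales like $u$. So we run the inverse cascade iteration adapted from \cite{CDP} with the stream function $b$ as the tracer. We put the Reynolds-type error of the velocity equation into the form
\[
u\otimes u-\nabla^\perp b\otimes\nabla^\perp b+R,
\]
so the tensor fed to the decomposition step is $u\otimes u-B\otimes B$ plus the error. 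We work backwards in time from $T$ towards $T_*$ with stages at times $T_*+\tau_n$, $\tau_n\to0$ geometrically. At each stage we add a high-frequency velocity perturbation $w_n$ at frequency $\lambda_n\sim\tau_n^{-1/2}$ with amplitude $\sim\lambda_n$, giving $\|u\|_{L^\infty}\sim (t-T_*)^{-1/2}$. We also add a stream-function perturbation $\beta_n$ with amplitude $O(1)$ at the same frequency, so that $\nabla^\perp\beta_n$ has size $\lambda_n$. This matches the stated upper bound on $B$ and gives non-trivial $B$.

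The key steps, in order, are as follows.
\begin{enumerate}[(i)]
\item Set up the iteration and its inductive estimates. The profiles are built from the paper's simultaneous decomposition lemma, which takes a symmetric tensor and a vector field. Here the tensor is the velocity error, corrected by the new magnetic stress $\nabla^\perp\beta_n\otimes\nabla^\perp\beta_n$. The vector field is the tracer flux error $u\,b$ minus its mean-field part.
\item Check the quadratic self-interactions. Both $w_n\otimes w_n$ and $\nabla^\perp\beta_n\otimes\nabla^\perp\beta_n$ must cancel the low-frequency stress from the previous stage. The low-frequency part of $w_n\otimes w_n-\nabla^\perp\beta_n\otimes\nabla^\perp\beta_n$ should equal the prescribed symmetric tensor, up to a trace absorbed in the pressure. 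Since the magnetic stress enters with a minus sign, we first fix $\beta_n$ from the tracer flux constraint. Its stress is a known positive semidefinite tensor, which we add to the target, and only then do we decompose to choose $w_n$. Positivity of the target is preserved by taking it to be $c\lambda_n^2\Id$ plus lower order terms.
\item Bound the errors, the heat-flow corrections and the interaction terms by the same geometric series as in Theorem \ref{main-thm}. This yields classical smoothness on $(T_*,T]$ and the bounds on $u,B,\nabla u,\nabla B$. It also yields $L^2_tL^p_x$ membership, from the $(t-T_*)^{-1/2}$ rate combined with the spatial localization (intermittency) of the profiles, giving the log-borderline integrability. Weak-* continuity in $BMO^{-1}$ holds because each layer is $O(1)$ in $\dot W^{-1,\infty}$.
\item Glue on $[0,T_*]$ the classical local solution starting from $(u_0,B_0)$.
\item Obtain infinitely many solutions by varying a free amplitude parameter, or the sequence $\tau_n$, in the first layer. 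This mirrors Theorem \ref{thm-nonunique-2d}.
\end{enumerate}

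The lower bound on $\|u(t_n)\|_{L^\infty}$ follows from the principal velocity profile dominating at its time $t_n$. No lower bound for $B$ is claimed, so we may keep $\beta_n$ small relative to $w_n$ if needed.

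The main obstacle is step (ii). The magnetic stress is quadratic in the tracer perturbation, so the compatibility constraint of the simultaneous decomposition lemma becomes nonlinear. The tracer must carry the required flux $w_n\beta_n$, while its own stress must not destroy the principal velocity profiles reused at the next stage. I would first try scaling $\beta_n$ by a small fixed parameter $\epsilon$. Then $\nabla^\perp\beta_n\otimes\nabla^\perp\beta_n=O(\epsilon^2\lambda_n^2)$ is a small perturbation of a positive definite target, still inside the open cone where the decomposition lemma applies. The flux $w_n\beta_n=O(\epsilon\lambda_n)$ is matched by rescaling the vector-field input. This keeps the same profiles from stage to stage with $\epsilon$-dependent constants.
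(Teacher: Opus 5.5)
\textbf{Where step (ii) fails.} Your plan runs an inverse cascade for the magnetic stream function, and it breaks at step (ii), but not for the reason you anticipate. The issue is not that the magnetic stress is quadratic in $\beta_n$. It is that, in genuine 2D MHD, the same-direction interaction of $w_n$ and $\beta_n$ cannot transmit enough flux to regenerate the stream function at the next level.

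\emph{Geometry.} A divergence-free wave oscillating in direction $\eta$ is parallel to $\eta^\perp$, while $\nabla\beta_n$ is parallel to $\eta$. So the leading part of $\div(w_n\beta_n)=w_n\cdot\nabla\beta_n$ vanishes; in $B$-variables, $w_n\otimes\nabla^\perp\beta_n-\nabla^\perp\beta_n\otimes w_n=0$ for parallel plane waves. Only the coarse-scale divergence of the mean of $w_n\beta_n$ survives at low frequency.

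\emph{Scaling.} With $w_n\sim\lambda_n e^{-\lambda_n^2 s}$ and $\beta_n\sim e^{-\lambda_n^2 s}$, the time-integrated flux is $\int_0^t\lambda_n e^{-2\lambda_n^2 s}\,ds\sim\lambda_n^{-1}$. Its divergence at the coarse scale gives a stream function of size $\lambda_{n-1}/\lambda_n\ll1$, not the $O(1)$ profile $\beta_{n-1}$ your ansatz needs. Since the $b$-equation is linear in $b$, the cascade forces the tracer amplitude to grow like $\lambda_n$. This is why the passive-tracer construction of Theorem \ref{main-thm} produces $\|b(t)\|_{L^\infty}\sim(t-T_*)^{-1/2}$, i.e.\ $\mathfrak a=1$. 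For $B=\nabla^\perp b$ this means $B\sim\lambda_n^2$, which is supercritical and whose magnetic stress overwhelms the velocity stress.

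\emph{Why $\epsilon$ does not help.} Rescaling $\beta_n$ by a small $\epsilon$ cannot fix this, because the deficit is a ratio of frequencies, not of amplitudes. The vector input to Lemma \ref{le-geometry-TV} would need coefficients of size $\lambda_n/\lambda_{n-1}$, so the profiles cannot be preserved from stage to stage. This is exactly the obstruction the paper records in Section \ref{sec-ansatz-2dmhd}, and it is why the paper abandons a direct cascade for 2D MHD.

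\textbf{The missing idea.} Since no lower bound on $B$ is required, the magnetic field should not cascade at all. The paper proceeds as follows.
\begin{enumerate}
\item Take the 2D Navier--Stokes blowup solution $u^{NS}$ of \cite{CDP}.
\item Solve the linear induction equation around $u^{NS}$ with small data $H_0=\nabla^\perp\psi_0$, $\|\psi_0\|_{C^\alpha}\le\delta_*$. Control $H^{lin}$ in $X$ via Propositions \ref{prop:semi-2dmhd-sketch} and \ref{prop:Hlin-from-data-sketch}, whose key input is the logarithmic bound on $\|u^{NS}\|_{L^1(s^{-1/2}ds;L^\infty)}$ combined with fractional Gr\"onwall.
\item Close a contraction for $(w,Z)$ in $\mathcal X$. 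This works because the Lorentz force $H^{lin}\otimes H^{lin}$ and all coupling terms are quadratically small in $Y$, giving $\|(w,Z)\|_{\mathcal X}\lesssim\|\psi_0\|_{C^\alpha}^2$.
\end{enumerate}
The velocity lower bound persists because $w\in X$ is $o(t^{-1/2})$ (Lemma \ref{lem:linfty-persistence-2dmhd}). The upper bounds on $B$ and $\nabla B$ come from the $X$-norm of $H^{lin}+Z$, and $\psi_0\neq0$ gives nontrivial $B$. No decomposition lemma is needed for the magnetic part.
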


As discussed in the companion paper \cite{Dai2026}, the instantaneous blowup rate established above is critical according to the scaling of the systems. The non-uniqueness occurs in spaces borderline to $L^2([0,T];L^\infty)$ which is the endpoint space of the Ladyzhenskaya--Prodi--Serrin type, where uniqueness is known. Therefore, the non-uniqueness is sharp in this sense.

As seen in the work \cite{CDP}, the proof of the 2D case is more technical than that of the 3D (or higher dimension) case. Thus, we will present the proof of Theorem \ref{main-thm} in the following sections.
The proof of Theorem \ref{thm-3d} is similar to that of Theorem 1.1 in \cite{Dai2026} and thus is omitted in this paper. Also, we focus on the blowup phenomenon; the non-uniqueness stated in Theorem \ref{thm-nonunique-2d} follows from analogous analysis as in \cite{CDP}.

The construction leading to a proof of Theorem \ref{thm-2dmhd} for the 2D MHD system \eqref{mhd-2d} is provided in Section \ref{sec:perturbative-2dmhd}.


\section{Notations and technical lemmas}\label{sec:notation}

\subsection{Notations}
Throughout the paper,
\[
\mathbb T^2=(\mathbb R/2\pi\mathbb Z)^2.
\]
For $1\leq p\leq\infty$, the space $L^p(\mathbb T^2)$ is understood in the usual sense, and we write either $\|f\|_{L^p(\mathbb T^2)}$ or simply $\|f\|_p$. We use $L^{p,\infty}$ for the Lorentz, or weak $L^p$, space, and we write $|\Omega|$ for the measure of $\Omega$ inside a fixed fundamental domain of $\mathbb T^2$. The spaces $BMO^{-1}$ and $X_T$ are taken in the sense of \cites{KochTataru2001,CDP}. We also write $\|\cdot\|_{C^\alpha}$ for the $\alpha$-H\"older seminorm on either $\mathbb R^2$ or $\mathbb T^2$.

We next record the Fourier multipliers that will be used repeatedly. The heat semigroup and the Leray projection are given by
$$e^{t\Delta}f\coloneqq\mathcal F^{-1}(e^{-t|\xi|^2}\hat f(\xi)),$$
$$\mathbb Pf\coloneqq \mathcal F^{-1}\left(\left(\Id-\frac{\xi\otimes \xi}{|\xi|^2}\right)\hat f(\xi)\right),$$
for vector-valued $f$. For dyadic $N\in 2^\mathbb N$, we also define the Littlewood--Paley projections
$$P_Nf\coloneqq\mathcal F^{-1}(\psi(|\xi|/N)\hat f(\xi)),$$
where $\psi\in C_c^\infty((2/3,3/2))$ is chosen so that $\sum_N\psi(r/N)\equiv 1$ for all $r\in[3/4,\infty)$.

For a vector field $f$, let $\nabla\odot f$ denote the symmetric tensor with entries $\frac12(\partial_if_j+\partial_jf_i)$. Following \cites{CDP,CoiculescuPalasek2025}, we introduce
\begin{align*}
    \mathcal Df&\coloneqq2\nabla\odot f-2(\div f)\Id,\\
    \newD f&\coloneqq2\nabla\odot f-(\div f)\Id,\\
    \mathcal R&\coloneqq \Delta^{-1}\newD,\\
    \mathbb Q&\coloneqq2\Delta^{-1}\nabla\odot \mathbb P\div,
\end{align*}
and recall the identities
\begin{align*}
    \div \mathcal D&=\Delta\mathbb P,\\
    \div\newD &= \Delta,\\
    \newD&=\mathbb Q\mathcal D+\left(2\frac{\nabla\otimes\nabla}{\Delta}-\Id\right)\div,\\
    \div \mathcal R&=P_{\neq0},
\end{align*}
where $P_{\neq0}$ denotes the projection onto nonzero Fourier modes, together with
\begin{align}\label{eq:Q_identities}
    \mathbb Q=\mathcal R\mathbb P\div,\qquad \mathbb Q\mathcal D=2\nabla\odot\mathbb P.
\end{align}

\subsection{Technical lemmas}
The following technical lemmas were proved in \cite{CDP}, and will be used frequently in later sections. 
\begin{lemma}[Commutator estimate for $e^{t\Delta}$]\label{l:commutator}
    Let $a\in C^\infty(\mathbb T^d)$ and $\xi\in\mathbb Z^d\setminus \{0\}$, and define
    \begin{align*}
        A_i\coloneqq |\xi|^{-i}\|\nabla^ia\|_{L^\infty}.
    \end{align*}
    Then
    \begin{equation}\begin{aligned}\label{commutator_inequality}
        &\|\nabla^n[e^{t\Delta},a(x)]\sin(\xi\cdot x)\|_{L^\infty}\\
        &\qquad\lesssim_{m,n} |\xi|^{n}\sum_{i=0}^n \left((A_i^{1-1/m}A_{m+i}^{1/m}+A_i^{1-2/m}A_{m+i}^{2/m})e^{-|\xi|^2t/4}+A_{m+i}\right)
        \end{aligned}\end{equation}
        for $m\geq3+n$. Furthermore,
        \begin{align}\label{heat-decay-estimate}
             \|\nabla^n e^{t\Delta}(a(x)\sin(\xi\cdot x))\|_\infty&\lesssim_{m,n} |\xi|^{n}\left(\sum_{i=0}^{n}A_i e^{-|\xi|^2t/4}+\sum_{i=m}^{n+m}A_{i}\right).
        \end{align}
\end{lemma}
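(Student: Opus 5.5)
The plan is to work on the Fourier side of the oscillation $\sin(\xi\cdot x)$, treating $a$ as slowly varying at scale $|\xi|^{-1}$. Expand $a=\sum_k \hat a(k)e^{ik\cdot x}$, so that
\[
e^{t\Delta}\bigl(a\sin(\xi\cdot x)\bigr)=\frac{1}{2i}\sum_k \hat a(k)\Bigl(e^{-t|k+\xi|^2}e^{i(k+\xi)\cdot x}-e^{-t|k-\xi|^2}e^{i(k-\xi)\cdot x}\Bigr).
\]
Split $a=a_{\mathrm{lo}}+a_{\mathrm{hi}}$ with $a_{\mathrm{lo}}=P_{\le \delta|\xi|}a$ for a small absolute constant $\delta$.

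For the high part, bound $e^{t\Delta}$ and $a_{\mathrm{hi}}e^{t\Delta}\sin$ directly in $L^\infty$, using that derivatives of the heat kernel are uniformly bounded in $L^1$. Each $\nabla$ that falls on $a_{\mathrm{hi}}$ or on $\sin(\xi\cdot x)$ costs at most a factor $|\xi|$ or a derivative of $a$. Bernstein's inequality gives $\|\nabla^i a_{\mathrm{hi}}\|_\infty\lesssim (\delta|\xi|)^{-m}\|\nabla^{m+i}a\|_\infty$, hence these contributions are bounded by $|\xi|^n\sum_{i\le n}A_{m+i}$. This is the time-independent error term in both \eqref{commutator_inequality} and \eqref{heat-decay-estimate}. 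One has to be careful here: derivatives $\nabla^n$ applied to $a_{\mathrm{hi}}\sin$ must be distributed by Leibniz *before* taking sup norms, so that only $A_{m+i}$ with $i\le n$ appear.

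For the low part, the frequencies $k\pm\xi$ satisfy $|k\pm\xi|\ge |\xi|/2$, which yields the decay $e^{-|\xi|^2t/4}$. For \eqref{heat-decay-estimate}, write
\[
e^{t\Delta}\bigl(a_{\mathrm{lo}}e^{i\xi\cdot x}\bigr)=e^{i\xi\cdot x}\,\bigl(e^{-t|\xi|^2}e^{t\Delta}e^{2t\xi\cdot\nabla}a_{\mathrm{lo}}\bigr).
\]
The symbol $e^{-t|\xi|^2-t|k|^2-2tk\cdot\xi}$, restricted to $|k|\le\delta|\xi|$, equals $e^{-t|\xi|^2/4}$ times a smooth multiplier whose Mikhlin/$L^1$-kernel norms are uniformly bounded. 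Derivatives then give $|\xi|^{n-i}$ times $\|\nabla^i a_{\mathrm{lo}}\|$, which produces the first sum.

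For the commutator, the low part equals
\[
e^{i\xi\cdot x}\,\Bigl(e^{-t|\xi|^2}\bigl(e^{t\Delta+2t\xi\cdot\nabla}-\Id\bigr)a_{\mathrm{lo}}\Bigr).
\]
The multiplier $\bigl(e^{-t(|k|^2+2k\cdot\xi)}-1\bigr)e^{-t|\xi|^2}$ is bounded by $\min\bigl(2,\,t|k||\xi|\bigr)e^{-t|\xi|^2/4}$ on the support.

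The main obstacle is turning this into the interpolated bounds $A_i^{1-1/m}A_{m+i}^{1/m}$ and $A_i^{1-2/m}A_{m+i}^{2/m}$. I would introduce a further splitting of $a_{\mathrm{lo}}$ at frequency $\lambda\le\delta|\xi|$. On $|k|\le\lambda$ the multiplier is $\lesssim (t|\xi|\lambda+t\lambda^2)e^{-t|\xi|^2/4}$. Using $t|\xi|^2e^{-t|\xi|^2/8}\lesssim 1$, this is $\lesssim (\lambda/|\xi|+\lambda^2/|\xi|^2)$ times $A_i$. On $\lambda<|k|\le\delta|\xi|$, Bernstein gives $(|\xi|/\lambda)^m A_{m+i}$. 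Optimizing $\lambda/|\xi|=(A_{m+i}/A_i)^{1/m}$, truncated at $\delta$ (if truncation occurs, $A_i\lesssim A_{m+i}$ and the bound is absorbed into $A_{m+i}$), produces exactly the two interpolation terms with exponents $1/m$ and $2/m$. The requirement $m\ge 3+n$ should enter in making the kernel bounds of the multipliers $\nabla^n$-uniform (summability of the Littlewood–Paley pieces in $L^\infty$, losing polynomial factors in $|k|/|\xi|$). I expect the delicate part to be carrying out these $L^\infty$ multiplier bounds with constants uniform in $t$.
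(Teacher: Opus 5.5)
The paper does not prove this lemma; it quotes it from \cite{CDP}. So your argument has to stand on its own, and most of it does. The high-frequency part via Bernstein is fine, and so is the proof of \eqref{heat-decay-estimate}. The multiplier bounds you flag as delicate are routine: rescale $k=\delta|\xi|\eta$, note that every $\eta$-derivative of $e^{-t|\xi+k|^2}$ costs only a power of $1+t|\xi|^2$, absorb that by $e^{-ct|\xi|^2}$ with $c>\frac14$, and periodize the $L^1(\mathbb R^d)$ kernel to $\mathbb T^d$.

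Two small slips. The conjugated operator is $e^{2it\xi\cdot\nabla}$, not $e^{2t\xi\cdot\nabla}$; your symbol is correct. Also, derivatives of the heat kernel are not uniformly in $L^1$. For the high part you only need that $\nabla$ commutes with $e^{t\Delta}$ and that $e^{t\Delta}$ is an $L^\infty$ contraction.

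The genuine problem is the final optimization for \eqref{commutator_inequality}. On the band $\lambda<|k|\le\delta|\xi|$ you bound the commutator multiplier by $2e^{-t|\xi|^2/4}$ and obtain $(|\xi|/\lambda)^mA_{m+i}$. With your choice $\lambda/|\xi|=(A_{m+i}/A_i)^{1/m}$, this term is a constant times $A_i e^{-t|\xi|^2/4}$. That is the uncancelled size, so the commutator gains nothing. Even the best choice of $\mu=\lambda/|\xi|$ in $\mu A_i+\mu^{-m}A_{m+i}$ gives only $A_i^{m/(m+1)}A_{m+i}^{1/(m+1)}$. This is not bounded by any multiple of $A_i^{1-1/m}A_{m+i}^{1/m}$ as $A_{m+i}/A_i\to0$, which is the only regime where the estimate has content. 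The exponents $1/m$ and $2/m$ you report were matched to the target, not derived.

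The loss comes from discarding the vanishing of the multiplier at $k=0$ on the middle band. Instead, keep the first-order structure on the whole low range:
\[
e^{-t|\xi+k|^2}-e^{-t|\xi|^2}=\sum_{l}(ik_l)\int_0^1 2it(\xi_l+\theta k_l)\,e^{-t|\xi+\theta k|^2}\,d\theta .
\]
For $|k|\le2\delta|\xi|$ one has $|\xi+\theta k|\ge(1-2\delta)|\xi|$. So the multipliers $M^l_\theta$ with symbols $2it(\xi_l+\theta k_l)e^{-t|\xi+\theta k|^2}$, cut off to that ball, have kernel norms $\lesssim|\xi|^{-1}e^{-t|\xi|^2/4}$, uniformly in $\theta$ and $t$. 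Hence the $e^{i\xi\cdot x}$-part of the low-frequency commutator is $e^{i\xi\cdot x}\int_0^1 M_\theta\cdot\nabla a_{\mathrm{lo}}\,d\theta$. After Leibniz it is bounded by $|\xi|^n\sum_{i\le n}A_{i+1}e^{-t|\xi|^2/4}$.

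Now apply the Landau--Kolmogorov inequality on $\mathbb R^d$ to the periodic function $\nabla^ia$: $\|\nabla^{i+1}a\|_\infty\lesssim\|\nabla^ia\|_\infty^{1-1/m}\|\nabla^{i+m}a\|_\infty^{1/m}$, that is, $A_{i+1}\lesssim A_i^{1-1/m}A_{m+i}^{1/m}$. This is the claimed bound, and the $2/m$ term becomes superfluous. Equivalently, within your own splitting, keep the factor $t|k||\xi|$ on the middle band. Bernstein then gives $(|\xi|/\lambda)^{m-1}A_{m+i}$, and your choice of $\lambda$ does balance.

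Finally, in this smooth Littlewood--Paley framework the hypothesis $m\ge3+n$ is never used; any $m\ge2$ works. So you need not look for where it enters.
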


\begin{lemma}\label{l:heat_stationary_phase}
Let $\xi\in\mathbb Z^d\setminus \{0\}$, $a\in C^\infty(\mathbb T^d; \mathbb R^d)$, and denote $A_{i,\kappa}\coloneqq|\xi|^{-i-\kappa}\|\nabla^ia\|_{C^\kappa}$ for $\kappa\in(0,1)$. Then for $n\geq 0$ and $m\geq 1$
\begin{align*}
\|\nabla^ne^{t\Delta}\mathcal R\mathbb P(ae^{i\xi\cdot x})\|_{C^\kappa}&\lesssim |\xi|^{n+\kappa}\left(|\xi|^{-1}\sum_{i=0}^{m-1}A_{i,\kappa}e^{-|\xi|^2t/4}+\sum_{i=m}^{n+2m}
A_{i,\kappa}\right).
\end{align*}
\end{lemma}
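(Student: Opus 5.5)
The plan is to reduce Lemma \ref{l:heat_stationary_phase} to a frequency-localized estimate for the symbol of $e^{t\Delta}\mathcal R\mathbb P$ acting on a modulated amplitude. The scheme is a Taylor expansion of the symbol about $\xi$, in the spirit of the stationary-phase proof of \eqref{heat-decay-estimate} in Lemma \ref{l:commutator}. Write $T_t\coloneqq\nabla^n e^{t\Delta}\mathcal R\mathbb P$; this is a Fourier multiplier with symbol
\[
m_t(\eta)=(i\eta)^{\otimes n}\,e^{-t|\eta|^2}\,\sigma(\eta),
\]
where $\sigma$ is the symbol of $\mathcal R\mathbb P$. The key structural fact is that $\sigma$ is homogeneous of degree $-1$. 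Also $\widehat{ae^{i\xi\cdot x}}(\eta)=\hat a(\eta-\xi)$. I would split $a=P_{\le |\xi|/8}a+P_{>|\xi|/8}a$ into low and high frequencies relative to $|\xi|$, and treat the two pieces separately.

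\textbf{High-frequency part.} For the high part, $P_{>|\xi|/8}a\,e^{i\xi\cdot x}$ is supported at frequencies $\lesssim$ that of $a$. I would bound $\|T_t(\cdot)\|_{C^\kappa}$ crudely, using that $e^{t\Delta}$ is bounded, that $\mathcal R\mathbb P$ gains one derivative, and that $\nabla^n$ costs $n$ derivatives, with Schauder/Littlewood--Paley boundedness on $C^\kappa$. Bernstein's inequality then turns the excess derivatives into factors $|\xi|^{-(i-n)}$. Since $\|P_{>|\xi|/8}a\|_{C^\kappa}\lesssim |\xi|^{-j}\|\nabla^j a\|_{C^\kappa}$, this part is bounded by $|\xi|^{n+\kappa}\sum_{i=m}^{n+2m}A_{i,\kappa}$, after choosing $j$ between $m$ and $n+2m$ as needed. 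Derivatives landing on the product are distributed by the Leibniz rule, which produces the range of indices $i$ up to $n+2m$.

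\textbf{Low-frequency part.} For the low part, the relevant $\eta$ satisfy $|\eta-\xi|\le|\xi|/8$, so $|\eta|\sim|\xi|$ and $m_t$ is smooth there. I would Taylor expand $m_t(\eta)$ about $\eta=\xi$ to order $m-1$:
\[
m_t(\eta)=\sum_{|\beta|<m}\frac{\partial^\beta m_t(\xi)}{\beta!}(\eta-\xi)^\beta+\mathcal E_m(\eta).
\]
Each term $\partial^\beta m_t(\xi)(\eta-\xi)^\beta$ acts on $\hat a(\eta-\xi)$ as $e^{i\xi\cdot x}\partial^\beta m_t(\xi)\,(-i\nabla)^\beta a$. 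We have
\[
|\partial^\beta m_t(\xi)|\lesssim |\xi|^{n-1-|\beta|}e^{-t|\xi|^2/4}.
\]
This follows by homogeneity of $\sigma$ together with $\partial^\gamma e^{-t|\eta|^2}\lesssim |\eta|^{-|\gamma|}e^{-t|\eta|^2/2}$, since polynomial factors $(t|\eta|^2)^k$ are absorbed by halving the exponent. Taking the $C^\kappa$ norm of $e^{i\xi\cdot x}\nabla^\beta a$ costs a factor $|\xi|^\kappa$ (for the phase) plus $\|\nabla^\beta a\|_{C^\kappa}$. The Taylor terms therefore give exactly $|\xi|^{n+\kappa}|\xi|^{-1}\sum_{i<m}A_{i,\kappa}e^{-|\xi|^2t/4}$.

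\textbf{Main obstacle: the remainder.} The step I expect to be hardest is the remainder $\mathcal E_m$. Its symbol has integral form, with $m$-th derivatives of $m_t$ evaluated on the segment between $\xi$ and $\eta$, so it is again a multiplier in which $\nabla^m$ falls on $a$. Its kernel must be bounded on $C^\kappa$ uniformly in $t$. I would handle this by writing the remainder operator as an average over $s\in[0,1]$ of modulated multipliers $\partial^\beta m_t(\xi+s(\cdot))$ localized to $|\cdot|\le|\xi|/4$. These obey Mikhlin-type bounds at scale $|\xi|$, uniformly in $t$ and $s$, and hence are bounded on $C^\kappa$ with norm $\lesssim|\xi|^{n-1-m}$. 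This yields a term $|\xi|^{n+\kappa}A_{m,\kappa}$ with no decay, which is acceptable since it is one of the terms $\sum_{i\ge m}A_{i,\kappa}$. The only delicate point is checking that the symbol bounds for the rescaled symbol are uniform in $t\ge0$, and this again reduces to $\sup_{r}r^ke^{-r}<\infty$. Because $\mathbb T^d$ is involved, the multipliers are restricted to the lattice; the same bounds hold by transference (periodization of the $\mathbb R^d$ kernel, which decays rapidly at scale $|\xi|^{-1}\le1$).
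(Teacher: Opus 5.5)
The paper does not prove this lemma; it imports it from \cite{CDP} (``The following technical lemmas were proved in \cite{CDP}''), so there is no internal proof to compare yours with. On its own merits, your proposal is a correct proof.

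\begin{itemize}
\item \emph{High piece.} You rightly use only that $\nabla^n e^{t\Delta}\mathcal R\mathbb P$ has order $n-1$: it is bounded $C^{n-1+\kappa}\to C^\kappa$ for $n\ge1$ and $L^\infty\to C^\kappa$ for $n=0$. You do not use an $|\xi|^{-1}$ gain, which would fail because $P_{>|\xi|/8}a\,e^{i\xi\cdot x}$ has Fourier modes near $0$. Leibniz together with $\|\nabla^jP_{>|\xi|/8}a\|\lesssim|\xi|^{-m}\|\nabla^{j+m}a\|$ (in $L^\infty$ and in $C^\kappa$) then produces only indices $i\in[m,n+m]$, which lies inside the stated range.
\item \emph{Low piece.} The remainder multipliers $\chi(\zeta/|\xi|)\,\partial^\beta m_t(\xi+s\zeta)$ with $|\beta|=m$ have $L^1$ kernels of size $|\xi|^{n-1-m}$, uniformly in $t$ and $s$. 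Periodizing the kernel handles $\mathbb T^d$.
\end{itemize}

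Two remarks.

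First, the Taylor expansion is more than you need. On $|\zeta|\le|\xi|/4$ the localized symbol $\chi(\zeta/|\xi|)\,m_t(\xi+\zeta)$ itself satisfies $|\partial_\zeta^\gamma(\cdot)|\lesssim|\xi|^{n-1-|\gamma|}e^{-t|\xi|^2/4}$, since $|\xi+\zeta|\ge\frac34|\xi|$ there. So the whole low piece is already bounded by the single decaying $i=0$ term. The parameter $m$ enters only through the number of derivatives you trade on the high piece.

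Second, your accounting ``the phase costs $|\xi|^\kappa$'' means you are proving the estimate with
\[
A_{i,\kappa}=|\xi|^{-i}\|\nabla^ia\|_{L^\infty}+|\xi|^{-i-\kappa}[\nabla^ia]_{C^\kappa},
\]
not literally $|\xi|^{-i-\kappa}\|\nabla^ia\|_{C^\kappa}$. State this explicitly rather than asserting that the Taylor terms give the stated bound ``exactly''. This reading is forced by a counterexample. Take a constant unit vector $a\perp\xi$ with $n=t=0$. The left side is $\sim|\xi|^{\kappa-1}$. With the H\"older seminorm fixed in Section~2 every $A_{i,\kappa}$ vanishes, so the right side is $0$; even with the full $C^\kappa$ norm it is only $\sim|\xi|^{-1}$.
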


\begin{lemma}[Oscillation estimate]\label{l:oscillation_estimate}
Let $a\in C^\infty(\mathbb T^d;\mathbb R^d)$ and $b\in C^\infty(\mathbb T^d;\mathbb R)$. Let $\lambda\in\mathbb N$ and $\Xi\in \mathbb Z^d$ with $|\Xi|\in\frac12\lambda\mathbb N$. Then, for any $n\geq0$, $m\geq1$, and $\kappa\in(0,1)$, we obtain 
\begin{align*}
&\Big\|\nabla^ne^{t\Delta}\mathcal R\mathbb P\Big(a(x)\Big(b(\lambda x)\sin^2(\Xi\cdot x)-\fint_{\mathbb T^d}b(\lambda y)\sin^2(\Xi\cdot y)dy\Big)\Big)\Big\|_{C^\kappa}\\
&\qquad\lesssim_{m,n,\kappa} \Bigg(\sum_{i=0}^{m-1}(\lambda^{n-i-1}+|\Xi|^{n-i-1})\|a\|_{C^{i,\kappa}}e^{-\lambda^2t/4}\\
&\qquad\qquad\qquad\qquad+\sum_{i=m}^{n+2m}
(\lambda^{n-i}+|\Xi|^{n-i})\|a\|_{C^{i,\kappa}}\Bigg)\|\nabla^{n+2d}b\|_{L^\infty}
\end{align*}
\end{lemma}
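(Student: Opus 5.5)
The plan is to reduce Lemma~\ref{l:oscillation_estimate} to the single-frequency stationary phase estimate of Lemma~\ref{l:heat_stationary_phase} by expanding the oscillating factor in Fourier series. First, write $\sin^2(\Xi\cdot x)=\tfrac12-\tfrac14(e^{2i\Xi\cdot x}+e^{-2i\Xi\cdot x})$ and expand
\[
b(\lambda x)=\sum_{k\in\mathbb Z^d}\hat b(k)e^{i\lambda k\cdot x}.
\]
The product $b(\lambda x)\sin^2(\Xi\cdot x)$ is then a sum of plane waves $e^{i\xi\cdot x}$ with frequencies $\xi\in\{\lambda k,\ \lambda k\pm 2\Xi\}$ and coefficients $\hat b(k)$ times $\tfrac12$ or $-\tfrac14$. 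Subtracting the mean $\fint b(\lambda y)\sin^2(\Xi\cdot y)\,dy$ removes exactly the terms with $\xi=0$: namely $k=0$ in the first family and $\lambda k=\mp2\Xi$ in the others. The hypothesis $|\Xi|\in\frac12\lambda\mathbb N$ is what makes the frequency bookkeeping clean here.

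Second, for every remaining frequency $\xi\neq 0$, apply Lemma~\ref{l:heat_stationary_phase} to $a e^{i\xi\cdot x}$. This gives
\[
|\xi|^{n+\kappa}\Bigl(|\xi|^{-1}\sum_{i<m}|\xi|^{-i-\kappa}\|a\|_{C^{i,\kappa}}e^{-|\xi|^2t/4}+\sum_{i=m}^{n+2m}|\xi|^{-i-\kappa}\|a\|_{C^{i,\kappa}}\Bigr),
\]
which simplifies to $|\xi|^{n-i-1}$ and $|\xi|^{n-i}$ weights. We then need to replace $|\xi|$ by $\lambda$ or $|\Xi|$. Every nonzero $\xi$ in the sum lies in the lattice $\lambda\mathbb Z^d+\{0,\pm2\Xi\}$. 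A nonzero element of $\lambda\mathbb Z^d$ has norm at least $\lambda$, and a nonzero $\lambda k\pm2\Xi$ is a nonzero lattice vector; this last claim uses the assumption on $\Xi$, which I would need to check carefully. Hence $|\xi|\gtrsim\lambda$ and $e^{-|\xi|^2t/4}\le e^{-\lambda^2t/4}$. For the powers, split on the sign of $n-i$. When the exponent is negative, use $|\xi|^{n-i}\le\lambda^{n-i}$. When it is nonnegative, use $|\xi|\lesssim\lambda|k|+|\Xi|$, so that
\[
|\xi|^{p}\lesssim \lambda^p|k|^p+|\Xi|^p\lesssim (\lambda^p+|\Xi|^p)\langle k\rangle^p .
\]
In all cases each weight is bounded by $(\lambda^{n-i(-1)}+|\Xi|^{n-i(-1)})\langle k\rangle^{n}$.

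Third, sum over $k$. The total is bounded by the right-hand side times $\sum_k\langle k\rangle^{n}|\hat b(k)|$, plus a factor $3$ for the three families. By Cauchy--Schwarz,
\[
\sum_k\langle k\rangle^n|\hat b(k)|\lesssim\Bigl(\sum_k\langle k\rangle^{-2d}\Bigr)^{1/2}\Bigl(\sum_k\langle k\rangle^{2n+2d}|\hat b(k)|^2\Bigr)^{1/2}\lesssim\|b\|_{H^{n+d}}.
\]
The desired $\|\nabla^{n+2d}b\|_{L^\infty}$ controls this, since $\|\hat b(k)\|\le|k|^{-(n+2d)}\|\nabla^{n+2d}b\|_\infty$ for $k\ne0$ and $\sum_{k\ne0}|k|^{-d-1}<\infty$. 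The $k=0$ mode needs separate handling: it appears only in the $\pm2\Xi$ families, and there $\hat b(0)$ is not controlled by derivatives. We can dispose of it in one of two ways. One option is to absorb $\hat b(0)$ by subtracting the constant from $b$, which leaves the left side unchanged only if $\sin^2$ is mean-adjusted consistently; this does not work directly. The fallback is to assume $b$ has mean zero as in the application, or to read the norm as inhomogeneous.

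The main obstacle is this zero-mode and lattice bookkeeping. Specifically, it consists of showing that every surviving frequency satisfies $|\xi|\gtrsim\lambda$, and of controlling the $\hat b(0)e^{\pm2i\Xi\cdot x}$ terms. Those terms have frequency $2|\Xi|\ge\lambda$, so the decay part is fine. Only the coefficient bound by $\|\nabla^{n+2d}b\|_\infty$ is an issue, and I expect the paper's convention (mean-zero $b$) resolves it.
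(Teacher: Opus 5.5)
The paper does not prove this lemma; it quotes it from \cite{CDP}. Your reduction is the natural one: expand $b$ in Fourier series and apply Lemma~\ref{l:heat_stationary_phase} to each mode $a e^{i\xi\cdot x}$. The $n+2d$ derivatives on $b$ are exactly what it costs to sum $\langle k\rangle^n|\hat b(k)|$ over $\mathbb Z^d$.

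The gap is the step you flagged. The hypothesis $|\Xi|\in\frac12\lambda\mathbb N$ constrains only the length of $\Xi$, not $2\Xi$ modulo $\lambda\mathbb Z^d$. So $\lambda k\pm 2\Xi$ need not lie in $\lambda\mathbb Z^d$: for $\lambda=5$, $\Xi=(4,3)$, $k=(-2,-1)$ one gets $\xi=(-2,1)$, with $|\xi|<\lambda$.

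No argument can close this step, because the inequality itself fails for general $b$. For odd $q$ take $\lambda=(q^2+1)/2$ and $\Xi=(\lambda-1,q)$, so $|\Xi|=\lambda$. Take $b(y)=\cos 2y_1$ and $a$ the constant unit vector orthogonal to $(1,-q)$.
\begin{itemize}
\item Then $b(\lambda x)\sin^2(\Xi\cdot x)$ contains $-\frac14\cos(2x_1-2qx_2)$, whose frequency has $|\xi|=2\sqrt{2\lambda}$. Its other modes have $|\xi|\ge 2\lambda$, so they cannot cancel this one.
\item For $n=0$ and $0\le t\le\lambda^{-1}$, this mode alone makes the left side $\gtrsim\lambda^{-(1-\kappa)/2}$.
\item The right side is $O_m(\lambda^{-1})$, so the inequality fails as $q\to\infty$.
\end{itemize}

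What is missing is a structural hypothesis. The one the application supplies is that $b$ depends only on $\Xi\cdot y$: in $\mathcal J_k^1$ one has $b(\lambda\,\cdot)=\varphi_{j,k+1}^2$, with $\eta^{j,\perp}\cdot\nabla\varphi_{j,k+1}=0$ and $\Xi=N_{j,k+1}\eta^j$. Under this hypothesis the lattice claim holds:
\begin{itemize}
\item $\hat b$ is supported on $\mathbb Z\Xi_0$, where $\Xi_0$ is the primitive lattice vector along $\Xi$.
\item Since $|\Xi|$ is rational with integer square, $|\Xi|\in\mathbb Z$, and likewise $|\Xi_0|\in\mathbb Z$.
\item Hence $\lambda\ell\Xi_0\pm2\Xi=(\lambda\ell|\Xi_0|\pm2|\Xi|)\,\Xi/|\Xi|$ has length in $\lambda\mathbb Z$.
\end{itemize}
This is exactly what the condition on $|\Xi|$ is for; for general $b$ you would instead need $2\Xi\in\lambda\mathbb Z^d$. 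With either hypothesis, your bound $|\xi|\ge\lambda$ holds and your weight replacements go through.

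Your diagnosis of the zero mode is also correct, and it too is a defect of the statement. For $b\equiv1$ and a constant $a\perp\Xi$, the left side is $\|\nabla^ne^{t\Delta}\mathcal R\mathbb P(-\frac12a\cos(2\Xi\cdot x))\|_{C^\kappa}\neq0$, while the right side vanishes.

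The resolution you expect, a mean-zero convention for $b$, is wrong; the paper has no such convention. In the application $b(\lambda\,\cdot)=\varphi_{j,k+1}^2\ge0$, normalized by $(2\pi)^{-2}\int\varphi_{j,k+1}^2\sin^2(N_{j,k+1}\eta^j\cdot x)\,dx=1$, so $\hat b(0)>0$.

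The right fix is to read the factor as the inhomogeneous norm $\|b\|_{C^{n+2d}}$. This is harmless in the application, because $\varphi_{j,k+1}=\Phi(M_{j,k+1}\,\cdot)$ with $\Phi$ of unit scale. The terms $\hat b(0)\,a\,e^{\pm2i\Xi\cdot x}$ are then handled directly by Lemma~\ref{l:heat_stationary_phase}, since $|\xi|=2|\Xi|\ge\lambda$.

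With these two corrections your argument is complete. The Cauchy--Schwarz detour through $H^{n+d}$ can be dropped, since $|\hat b(k)|\lesssim|k|^{-n-2d}\|\nabla^{n+2d}b\|_{L^\infty}$ sums directly.
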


\section{Heuristics of the scheme}
\label{sec:heuristics}

\subsection{Main ideas of the construction}\label{sec:idea}
At the heuristic level, the blowup is driven by energy injected from arbitrarily high frequencies into low frequencies. Accordingly, the construction is organized as an inverse cascade from high modes to low modes. This cascade is produced iteratively by running convex integration along a sequence of times converging to the blowup time, which in the proofs is shifted from $T_*$ to $t=0$. The underlying cascade mechanism is the same as in the Navier--Stokes construction of \cite{CDP}, but the coupled structure of \eqref{eq-main} changes what must be preserved from one stage to the next. Each step must both generate the desired low-frequency contribution and keep the principal ansatz intact. The high-high to low transfer is a standard feature of convex integration; the genuinely new issue is enforcing this compatibility for a system involving both a symmetric tensor and a vector field. This is why we need a new decomposition lemma adapted to the nonlinear structure of \eqref{eq-main}.


We first recall the standard geometric lemma (see, for instance, \cite{DeLellisSzekelyhidi2013}) for a symmetric tensor which has been extensively used in previous convex integration constructions. 

\begin{lemma}[Symmetric geometric lemma]\label{le-geometry}
Let $B_{\varepsilon_u}(Id)$ be the ball of radius $\varepsilon_u>0$ centered at the identity matrix in the space of $2\times 2$ symmetric matrices. The exists a finite subset $\Lambda_u\subset S^1\cap \mathbb Q^2$ of vectors $\eta$ associated with orthonormal bases $(\eta, \eta^{\perp})$ such that for any $R\in B_{\varepsilon_u}(Id)$, we have the decomposition 
\begin{equation}\label{decomp1}
R=\sum_{\eta\in \Lambda_u}\Gamma_\eta^2(R) \eta^{\perp}\otimes \eta^{\perp}
\end{equation}
for smooth functions $\Gamma_\eta: B_{\varepsilon_u}(Id)\to \mathbb R$.
\end{lemma}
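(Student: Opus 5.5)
We prove the statement by choosing finitely many rational directions that span the symmetric $2\times2$ matrices with positive coefficients at the identity, and then using the implicit function theorem to extend a positive decomposition from the identity to a small ball around it.

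The space $\mathcal S$ of symmetric $2\times 2$ matrices is three-dimensional. We take three rational unit vectors, for instance
\[
\eta_1=(1,0),\qquad \eta_2=(0,1),\qquad \eta_3=\Big(\tfrac35,\tfrac45\Big).
\]
The associated rank-one matrices $\eta_i^\perp\otimes\eta_i^\perp$ are then
\[
\begin{pmatrix}0&0\\0&1\end{pmatrix},\qquad
\begin{pmatrix}1&0\\0&0\end{pmatrix},\qquad
\frac1{25}\begin{pmatrix}16&-12\\-12&9\end{pmatrix}.
\]
The third matrix has a nonzero off-diagonal entry, so these three matrices are linearly independent and form a basis of $\mathcal S$.

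A single basis is not enough, because $\Id$ need not have positive coordinates in it. For the choice above, writing $\Id=a\,\eta_1^\perp\otimes\eta_1^\perp+b\,\eta_2^\perp\otimes\eta_2^\perp+c\,\eta_3^\perp\otimes\eta_3^\perp$ and reading off the off-diagonal entry forces $c=0$. We therefore use a symmetric family of directions so that $\Id$ lies in the interior of the positive cone.

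Since $\Lambda_u$ consists of unit vectors, any finite subset satisfies
\[
\sum_{\eta\in\Lambda_u}\eta^\perp\otimes\eta^\perp=\Big(\sum_{\eta\in\Lambda_u}\eta^\perp_i\eta^\perp_j\Big)_{ij}.
\]
Take $\Lambda_u$ to consist of the six vectors
\[
(1,0),\quad (0,1),\quad \pm\Big(\tfrac35,\tfrac45\Big)\ \text{(one sign)},\quad \Big(\tfrac35,-\tfrac45\Big),\quad \Big(\tfrac45,\tfrac35\Big),\quad \Big(\tfrac45,-\tfrac35\Big).
\]
This set is invariant under the reflection $x_2\mapsto -x_2$, which cancels all off-diagonal terms, and under the swap of coordinates, which equalizes the diagonal entries. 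Consequently
\[
\sum_{\eta\in\Lambda_u}\eta^\perp\otimes\eta^\perp=\mu\,\Id,\qquad \mu=\frac{|\Lambda_u|}{2}>0.
\]
Hence $\Id=\sum_{\eta\in\Lambda_u}\gamma_\eta^0\,\eta^\perp\otimes\eta^\perp$ with all coefficients $\gamma_\eta^0=1/\mu>0$. Moreover, the matrices $\eta^\perp\otimes\eta^\perp$ for $\eta\in\Lambda_u$ span $\mathcal S$, since the three vectors chosen in the first step already do.

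Next we extend this decomposition to a neighbourhood of $\Id$. Consider the linear map
\[
L:\mathbb R^{\Lambda_u}\to\mathcal S,\qquad L(\gamma)=\sum_{\eta\in\Lambda_u}\gamma_\eta\,\eta^\perp\otimes\eta^\perp .
\]
It is surjective because its image contains a basis of $\mathcal S$. Let $L^+$ be a fixed right inverse of $L$, for instance the Moore--Penrose inverse; it is a linear, hence smooth, map. Define
\[
\gamma(R)=\gamma^0+L^+(R-\Id).
\]
Then $L(\gamma(R))=\Id+(R-\Id)=R$ for every $R\in\mathcal S$, and $\gamma$ depends affinely on $R$. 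Since each $\gamma_\eta^0=1/\mu>0$, we have
\[
\gamma_\eta(R)\ge \frac1\mu-\|L^+\|\,|R-\Id|>\frac1{2\mu}
\]
for all $R\in B_{\varepsilon_u}(\Id)$ once $\varepsilon_u<1/(2\mu\|L^+\|)$. On this ball we may therefore set
\[
\Gamma_\eta(R)=\sqrt{\gamma_\eta(R)}.
\]
Each $\Gamma_\eta$ is smooth, being the square root of a smooth function bounded below by a positive constant, and \eqref{decomp1} holds by construction.

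The only delicate point is the positivity of the coefficients at $\Id$. A minimal basis can fail this, as the first step shows, and that is why the symmetric six-element family is used. Rationality of the directions is automatic from the Pythagorean triples $(3,4,5)$ and $(1,0,1)$; the remaining steps are elementary linear algebra.
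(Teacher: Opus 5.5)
Your proof is correct, and it is essentially the standard argument behind the result the paper cites from De Lellis--Sz\'ekelyhidi rather than proves: choose rational directions whose matrices $\eta^\perp\otimes\eta^\perp$ span the symmetric matrices and represent $\Id$ with strictly positive weights, let the weights depend affinely on $R$, keep them positive on a small ball, and take square roots (no implicit function theorem is needed, as your right-inverse construction shows). If you also want the property noted right after the lemma, $\Lambda_u\cap\{\pm e_1,\pm e_2\}=\emptyset$, which matters when $\Lambda_b=\Lambda_u\cup\Lambda_u^-\cup\{\pm e_1,\pm e_2\}$ is formed in Lemma~\ref{le-geometry-TV}, drop $(1,0)$ and $(0,1)$: the four vectors $(\tfrac35,\pm\tfrac45)$, $(\tfrac45,\pm\tfrac35)$ still give $\sum_\eta\eta^\perp\otimes\eta^\perp=2\Id$, their rank-one matrices still span, and your argument applies verbatim.
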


We fix the notation: if $\eta=(\eta_1,\eta_2)$, $\eta^{\perp}=(-\eta_2, \eta_1)$. 

Note $\Lambda_u$ in Lemma \ref{le-geometry} can be chosen such that $\Lambda_u\cap \{\pm e_1,\pm e_2\}=\emptyset$. In addition to the standard geometric lemma above, we prove the following decomposition lemma, which can be generalized to any higher dimensional case.

\begin{lemma}[ Tensor-vector decomposition]\label{le-geometry-TV}
There exist $\varepsilon_*>0$ and a finite family of unit vectors
\[
\Lambda_b=\{\eta^m\}_{m=1}^N\subset S^1\cap\mathbb Q^2
\]
together with smooth maps
\[
\Gamma_m:\;B_{\varepsilon_*}(\Id)\times \mathbb R^2\to (0,\infty),\qquad
p:\;B_{\varepsilon_*}(\Id)\times \mathbb R^2\to \mathbb R,
\]
such that for every $(R,g)\in B_{\varepsilon_*}(\Id)\times\mathbb R^2$,
\[
\begin{split}
R-p(R,g)\Id&=\sum_{m=1}^N\Gamma_m^2(R,g)\,\eta^{m,\perp}\otimes\eta^{m,\perp},\\
g&=\sum_{m=1}^N\Gamma_m(R,g)\,\eta^{m,\perp}.
\end{split}
\]
\end{lemma}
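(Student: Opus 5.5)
We will derive the statement from Lemma \ref{le-geometry}, using a doubling trick: we pair each direction $\eta$ with $-\eta$ and add a large multiple of the identity through $p$. The key observation is that the two directions $\eta$ and $-\eta$ give the same rank-one tensor $\eta^\perp\otimes\eta^\perp$ but opposite vectors $\pm\eta^\perp$. This lets us prescribe the tensor weight and the vector coefficient along $\eta^\perp$ independently.

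Concretely, suppose nonnegative $\Gamma_\pm$ are to satisfy $\Gamma_+^2+\Gamma_-^2=w$ and $\Gamma_+-\Gamma_-=c$. Then $\Gamma_+\Gamma_-=(w-c^2)/2$ and $(\Gamma_++\Gamma_-)^2=2w-c^2$. So whenever $w>c^2$ we can take
\[
\Gamma_\pm=\tfrac12\bigl(\sqrt{2w-c^2}\pm c\bigr)>0 ,
\]
which is smooth in $(w,c)$ on the open set $\{w>c^2\}$.

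\textbf{Step 1 (the directions).} Start from $\Lambda_u$ and the functions $\Gamma_\eta$ of Lemma \ref{le-geometry}. We may assume three things, after discarding redundant directions and shrinking $\varepsilon_u$ to some $\varepsilon_*$:
\begin{itemize}
\item $\Lambda_u$ contains no antipodal pair;
\item the vectors $\eta^\perp$, $\eta\in\Lambda_u$, span $\mathbb R^2$;
\item $\Gamma_\eta^2\ge c_0>0$ on $\overline{B_{\varepsilon_*}(\Id)}$.
\end{itemize}
The lower bound holds because the standard construction gives $\Gamma_\eta(\Id)>0$ for every $\eta$, and continuity then gives positivity near $\Id$. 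This lower bound is the step that needs care, and it is where I would check the construction of Lemma \ref{le-geometry}. If the given $\Gamma_\eta$ could vanish, I would instead redo that lemma with the identity written as a strictly positive combination of the $\eta^\perp\otimes\eta^\perp$ and apply the implicit function theorem.

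Set $\Lambda_b=\Lambda_u\cup(-\Lambda_u)$. It consists of rational unit vectors because $\Lambda_u$ does. Fix linear maps $c_\eta:\mathbb R^2\to\mathbb R$ with $g=\sum_{\eta\in\Lambda_u}c_\eta(g)\,\eta^\perp$, for instance by expanding $g$ in two independent vectors $\eta^\perp$ and setting the remaining coefficients to zero. Let $C_1$ be a constant with $|c_\eta(g)|^2\le C_1|g|^2$ for all $\eta$ and $g$.

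\textbf{Step 2 (absorbing $g$ via $p$).} Define $M(g)=1+(C_1/c_0)|g|^2$ and $p(R,g)=-M(g)$. Then
\[
R-p\Id=R+M\Id=(1+M)\,\tilde R,\qquad \tilde R=\Id+\frac{R-\Id}{1+M}.
\]
Since $|\tilde R-\Id|<\varepsilon_*$, Lemma \ref{le-geometry} applies to $\tilde R$ and gives
\[
R-p\Id=\sum_{\eta\in\Lambda_u}w_\eta\,\eta^\perp\otimes\eta^\perp,\qquad w_\eta=(1+M)\,\Gamma_\eta^2(\tilde R).
\]
The weights are smooth in $(R,g)$ and satisfy
\[
w_\eta\ge(1+M)c_0>C_1|g|^2\ge c_\eta(g)^2 .
\]

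\textbf{Step 3 (splitting).} For each $\eta\in\Lambda_u$, apply the pairing formula above with $w=w_\eta$ and $c=c_\eta(g)$. Set $\Gamma_\eta=\Gamma_+$ and $\Gamma_{-\eta}=\Gamma_-$. Both are strictly positive and smooth on $B_{\varepsilon_*}(\Id)\times\mathbb R^2$.

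Since $(-\eta)^\perp\otimes(-\eta)^\perp=\eta^\perp\otimes\eta^\perp$, the tensor identity follows from $\Gamma_+^2+\Gamma_-^2=w_\eta$. Since $(-\eta)^\perp=-\eta^\perp$, the vector identity follows from $\Gamma_+-\Gamma_-=c_\eta(g)$. Relabeling $\Lambda_b$ as $\{\eta^m\}_{m=1}^N$ completes the proof. The only delicate point is the uniform positivity in Step 1, which is what makes a $g$-dependent choice of $p$ sufficient for all $g\in\mathbb R^2$.
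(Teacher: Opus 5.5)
Your proof is correct, and it is a genuine variant of the paper's argument. Both arguments use the same antipodal pairing formula $\Gamma_\pm=\tfrac12(\sqrt{2w-c^2}\pm c)$ and the choice $p=-M(g)$ with $M\sim 1+|g|^2$. The difference is where $g$ is carried. The paper splits every $\xi\in\Lambda_u$ evenly, $\Gamma_{\pm\xi}=A_\xi(R)/\sqrt2$, so that the $\Lambda_u\cup(-\Lambda_u)$ part reproduces $R$ exactly with zero vector contribution. It then adjoins the four coordinate directions $\pm e_1,\pm e_2$, which alone carry $g$ (with $w=M(g)=1+|g|^2$ and $c=\pm g_i$) and contribute exactly $M(g)\Id$, absorbed by $p$. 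You instead carry $g$ asymmetrically on the $\Lambda_u$ pairs themselves. You create the room $w_\eta>c_\eta(g)^2$ by feeding $R+M\Id=(1+M)\tilde R$ back into Lemma \ref{le-geometry}.

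The paper's route needs no quantitative lower bound on the $A_\xi$ and no renormalization, and its $g$-dependence is completely explicit, at the cost of four extra directions. Yours uses only $2|\Lambda_u|$ directions and has strict positivity of every $\Gamma_m$ built in through $w_\eta>c_\eta^2$. On positivity your version is actually cleaner. As written, the paper's $\Gamma_{e_2}=\tfrac12(-g_1-s_1)$ and $\Gamma_{-e_2}=\tfrac12(g_1-s_1)$ are negative, contrary to the stated codomain $(0,\infty)$. They should be replaced by $\tfrac12(s_1-g_1)$ and $\tfrac12(s_1+g_1)$ respectively.

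The step you flag as delicate costs essentially nothing. Since $M\ge1$, you have $|\tilde R-\Id|\le\tfrac12|R-\Id|$. So you only need $\Gamma_\eta>0$ on the compact set $\overline{B_{\varepsilon_*/2}(\Id)}$. This follows from positivity on the open ball, which is the same positivity the paper's proof assumes for its $A_\xi$, without shrinking $\varepsilon_*$.
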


\begin{proof}
By Lemma~\ref{le-geometry}, there exist $\varepsilon_u>0$, a finite set
$\Lambda_u\subset S^1\cap\mathbb Q^2$, and smooth functions
$A_\xi:B_{\varepsilon_u}(\Id)\to(0,\infty)$ such that
\[
R=\sum_{\xi\in\Lambda_u}A_\xi^2(R)\,\xi^\perp \otimes\xi^\perp,\qquad R\in B_{\varepsilon_u}(\Id).
\]
Fix $\varepsilon_*=\varepsilon_u$.
Denote $\Lambda_u^-=\{-\xi:\xi\in\Lambda_u\}$.
Define the family $\Lambda_b=\Lambda_u\cup \Lambda_u^- \cup \{\pm e_1,\pm e_2\}$.
We index this family by $m=1,\dots,N$.

For each $\xi\in\Lambda_u$, set
\[
\Gamma_{\xi,+}(R,g)\coloneqq \frac1{\sqrt2}A_\xi(R),\qquad
\Gamma_{\xi,-}(R,g)\coloneqq \frac1{\sqrt2}A_\xi(R).
\]
Then
\[
\Gamma_{\xi,+}^2\,\xi^\perp\otimes\xi^\perp+\Gamma_{\xi,-}^2\,(-\xi)^\perp\otimes(-\xi)^\perp
=A_\xi^2(R)\,\xi^\perp\otimes\xi^\perp,
\]
while the associated vector contribution vanishes:
\[
\Gamma_{\xi,+}\xi^\perp+\Gamma_{\xi,-}(-\xi)^\perp=0.
\]

Now write $g=(g_1,g_2)$ and define
\[
M(g)\coloneqq 1+|g|^2,\qquad
s_i(g)\coloneqq \sqrt{2M(g)-g_i^2},\quad i=1,2.
\]
Since $2M(g)-g_i^2=2+2|g|^2-g_i^2\ge 2+|g|^2>0$, each $s_i$ is smooth.
Set
\[
\begin{split}
\alpha_1(g)\coloneqq \frac{-g_1-s_1(g)}2,\qquad
\beta_1(g)\coloneqq \frac{g_1-s_1(g)}2,\\
\alpha_2(g)\coloneqq \frac{g_2+s_2(g)}2,\qquad
\beta_2(g)\coloneqq \frac{s_2(g)-g_2}2,
\end{split}
\]
and define
\[
\begin{split}
\Gamma_{e_1}(R,g)\coloneqq \alpha_2(g),\qquad
\Gamma_{-e_1}(R,g)\coloneqq \beta_2(g),\\
\Gamma_{e_2}(R,g)\coloneqq \alpha_1(g),\qquad
\Gamma_{-e_2}(R,g)\coloneqq \beta_1(g).
\end{split}
\]
Then
\[
\alpha_1-\beta_1=-g_1,\quad \alpha_2-\beta_2=g_2;  \quad
\alpha_i^2+\beta_i^2=M(g), \quad \mbox{for} \quad i=1,2.
\]
Thus we can verify
\[
\sum_{i=1}^2\big(\Gamma_{e_i}e_i^\perp+\Gamma_{-e_i}(-e_i)^\perp\big)=g
\]
and
\[
\sum_{i=1}^2\big(\Gamma_{e_i}^2 e_i^\perp\otimes e_i^\perp+\Gamma_{-e_i}^2 (-e_i)^\perp\otimes(-e_i)^\perp\big)=M(g)\Id.
\]

Combining both groups of directions gives
\[
\sum_{m=1}^N\Gamma_m(R,g)\,\eta^{m,\perp}=g,
\]
and
\[
\sum_{m=1}^N\Gamma_m^2(R,g)\,\eta^{m,\perp}\otimes\eta^{m,\perp}
=R+M(g)\Id.
\]
Therefore, setting
\[
p(R,g)\coloneqq -M(g),
\]
we obtain
\[
R-p(R,g)\Id=\sum_{m=1}^N\Gamma_m^2(R,g)\,\eta^{m,\perp}\otimes\eta^{m,\perp},\qquad
g=\sum_{m=1}^N\Gamma_m(R,g)\,\eta^{m,\perp}.
\]
It is clear to see that all coefficient maps are smooth on
$B_{\varepsilon_*}(\Id)\times\mathbb R^2$.
\end{proof}

We index the elements of $\Lambda_u$ together with $\Lambda_b$ despite repetitions by $1, 2, ..., J$ (so that $J$ is the sum of the numbers of elements in $\Lambda_u$ and $\Lambda_b$) and denote $\mathcal J=\{1, 2, ... , J\}$.
For $k\in\mathbb N$ and $j\in \mathcal J$, the frequency scales $N_{j,k}$ will be chosen to rapidly increase and satisfy
\[
A^{\mathfrak{b}^k}\sim N_{1,k} \ll N_{2,k} \ll \dots \ll N_{J,k} \ll N_{1,k+1} \sim A^{\mathfrak{b}^{k+1}}
\]
for a large constant $A\gg 1$ and another constant $\mathfrak{b}>1$.
Accordingly we choose a decreasing sequence of times $N_{1, k+1}^{-2} \ll t_k \ll N_{J,k}^{-2}$. We then design an iterative scheme such that the energy transfers from the $(k+1)$-th frequency level to the $k$-th level on the time interval $[t_{k+1},t_k]$.

\subsection{Building blocks and the ansatz of the principal solution for \eqref{eq-main}}
\label{sec-ansatz}
We begin with oscillatory profiles of the form
\[
\begin{split}
\psi_{j,k,u}(x)&\approx N_{j,k}^{-2}a_{j,k,u}(x)\eta^{j,\perp}\sin(N_{j,k}\eta^j\cdot x), \quad \eta^j\in \Lambda_u,\\
\psi_{j,k,c}(x)&\approx N_{j,k}^{-2}a_{j,k,b}(x)\eta^{j,\perp}\sin(N_{j,k}\eta^j\cdot x), \quad \eta^j\in \Lambda_b,\\
\psi_{j,k,b}(x)&\approx N_{j,k}^{-2}\sin(N_{j,k}\eta^j\cdot x), \quad \eta^j\in \Lambda_b
\end{split}
\]
with $N_{j,k}\eta^j\in\mathbb Z^2$ for all $j\in \mathcal J$ and $k\in\mathbb N$. The label ``c'' in $\psi_{j,k,c}$ marks the extra profile inserted to transmit the velocity-scalar coupling. The amplitudes $a_{j,k,u}$ and $a_{j,k,b}$ are chosen recursively from the previous frequency level and vary on spatial scales much larger than $N_{j,k}^{-1}$.

We decompose the solution as $(u,b)=(v,h)+(w,\zeta)$, where $(v,h)$ is the principal profile and $(w,\zeta)$ is the perturbative correction. The discussion below concerns only $(v,h)$ and explains how the inverse cascade is encoded in the main ansatz. We set heuristically
\[v(x,t) = \sum_k v_k(x,t), \quad h(x,t) = \sum_k h_k(x,t)\]
with components $v_k$ and $h_k$ given approximately by
\[
\begin{split}
v_k(x,t)& \approx -\sum_{j\in \Lambda_u} N_{j,k} e^{-N_{j,k}^2t} \mathbb P\Delta \psi_{j,k,u}(x)-\sum_{j\in \Lambda_b} N_{j,k} e^{-N_{j,k}^2t} \mathbb P\Delta \psi_{j,k,c}(x), \\
h_k(x,t)& \approx -\sum_{j\in \Lambda_b} N_{j,k} e^{-N_{j,k}^2t} \Delta \psi_{j,k,b}(x)
\end{split}
\]
for $t \geq t_k$,
where, as a shorthand, $j\in \Lambda_u$ and $j\in \Lambda_b$ mean that the sum runs over those indices for which $\eta^j$ belongs to the corresponding set. Because the amplitudes are much lower frequency than the oscillatory phases, the dominant effect of $-\Delta$ or $-\mathbb P\Delta$ is to remove the prefactor $N_{j,k}^{-2}$ and recover an $O(1)$ oscillation. More precisely,
$$-\mathbb P\Delta\psi_{j,k,u}(x)\approx a_{j,k,u}(x)\eta^{j,\perp}\sin(N_{j,k}\eta^j\cdot x)=O(1),$$ 
and likewise
\[-\Delta\psi_{j,k,b}(x)\approx O(1), \quad -\mathbb P\Delta \psi_{j,k,c}(x)\approx O(1).\]

The recursive goal is that the $(k+1)$-st frequency level generates the $k$-th one over the short interval $[t_{k+1},t_k]$. In heuristic form, we therefore want to choose $a_{j,k,u}$ and $a_{j,k,b}$ so that
\begin{equation} \label{ansatz-intro}
\begin{split}
v_k(x,t) &\approx -\int_0^{t} e^{(t-s)\Delta }\mathbb P\div ( v_{k+1}\otimes v_{k+1})(s) \, ds,\\
h_k(x,t) &\approx -\int_0^{t} e^{(t-s)\Delta }\div ( v_{k+1} h_{k+1})(s) \, ds.
\end{split}
\end{equation}
A direct computation of the time integral shows that the relevant scaling factor satisfies
\begin{equation}\label{scale-separation}
\int_0^{t} N_{j,k} N_{j',k} e^{-N_{j,k}^2 s-N_{j',k}^2 s} \, ds \sim 
\begin{cases}
1, &j=j',\\
N_{j,k}^{-1} N_{j',k} \ll 1, &j > j',
\end{cases}
\end{equation}
for $t\geq t_{k-1}$. This separation says that, to leading order, only the self-interactions from the same direction matter. Consequently,
\[
\begin{split}
&\quad\int_0^t P_{< N_{1,k+1}}\mathbb P\div( v_{k+1}\otimes v_{k+1}) \, ds \\
&\approx P_{< N_{1, k+1}}\mathbb P\div \sum_{j\in \Lambda_u} \mathbb P\Delta \psi_{j,k+1,u} \otimes \mathbb P\Delta \psi_{j,k+1,u}\\
&\quad+P_{< N_{1, k+1}}\mathbb P\div \sum_{j\in \Lambda_b} \left(\mathbb P\Delta \psi_{j,k+1,c} \otimes \mathbb P\Delta \psi_{j,k+1,c}\right)\\
& \approx \mathbb P\div\sum_{j\in\Lambda_u}a_{j,k+1,u}^2\eta^{j,\perp}\otimes\eta^{j,\perp}+\mathbb P\div\sum_{j\in\Lambda_b}a_{j,k+1,b}^2\eta^{j,\perp}\otimes\eta^{j,\perp}
\end{split}
\]
for $t\geq t_k$, after discarding the lower-order cross interactions. The scalar transport term admits the analogous leading-order reduction:
\[
\begin{split}
&\quad\int_0^t P_{< N_{1,k+1}}\div( v_{k+1} h_{k+1}) \, ds \\
&\approx P_{< N_{1, k+1}}\div \sum_{j\in \Lambda_b} \mathbb P\Delta \psi_{j,k+1,c} \Delta \psi_{j,k+1,b}\\
& \approx \div\sum_{j\in\Lambda_b}a_{j,k+1,b}\eta^{j,\perp}.
\end{split}
\]

Suppose now that the amplitudes are arranged so that
\begin{equation}\label{decomp-heuristics}
\begin{split}
\sum_{j\in \Lambda_u}a_{j,k+1,u}^2\eta^{j,\perp}\otimes\eta^{j,\perp} &\approx \mathcal D \sum_{j\in\Lambda_u} N_{j,k}\psi_{j,k,u}+p\Id, \\
\sum_{j\in\Lambda_b}a_{j,k+1,b}^2\eta^{j,\perp}\otimes\eta^{j,\perp}&\approx \mathcal D \sum_{j\in\Lambda_b}  N_{j,k}\psi_{j,k,c}+p\Id,\\
\sum_{j\in\Lambda_b}a_{j,k+1,b}\eta^{j,\perp} &\approx \nabla \sum_{j\in\Lambda_b}  N_{j,k}\psi_{j,k,b}
\end{split}
\end{equation}
for some scalar functions $p(x)$, which may change from line to line. Inserting these identities into the previous approximations yields
\[
\begin{split}
&\quad\int_0^t P_{< N_{1,k+1}}\mathbb P\div (v_{k+1}\otimes v_{k+1}) \, ds\\
& \approx \sum_{j\in\Lambda_u} N_{j,k} \Delta \psi_{j,k,u}+\sum_{j\in\Lambda_b} N_{j,k} \Delta \psi_{j,k,c} + \nabla p,
\end{split}
\]
\[\int_0^t P_{< N_{1,k+1}}\div( v_{k+1} h_{k+1}) \, ds\approx \sum_{j\in\Lambda_b} N_{j,k} \Delta \psi_{j,k,b} \]
for $t\geq t_k$. This is the heuristic mechanism behind \eqref{ansatz-intro}. 

The first line of \eqref{decomp-heuristics} falls within the standard geometric Lemma \ref{le-geometry}. The last two lines are more rigid: they must be realized simultaneously with the same coefficients $a_{j,k+1,b}$ because both come from the scalar-coupling component of the scheme. This is precisely where the new tensor-vector decomposition Lemma \ref{le-geometry-TV} is needed, and it is the genuinely new structural input in the construction.

\subsection{Heuristics for 2D MHD \eqref{mhd-2d}}
\label{sec-ansatz-2dmhd}
In this section we shall argue that the inverse-cascade mechanism combined with the convex integration approach described above seems to be rather difficult to be implemented for the genuine 2D MHD due to the rigid geometry in the magnetic equation.
For the same $N_{j,k}$ and $\eta^j$ as in previous section, now we choose the potential profiles
\[
\begin{split}
\psi_{j,k,u}(x)&\approx N_{j,k}^{-2}a_{j,k,u}(x)\eta^{j,\perp}\sin(N_{j,k}\eta^j\cdot x), \quad \eta^j\in \Lambda_u,\\
\psi_{j,k,c}(x)&\approx N_{j,k}^{-2}a_{j,k,b}(x)\eta^{j,\perp}\sin(N_{j,k}\eta^j\cdot x), \quad \eta^j\in \Lambda_b,\\
\psi_{j,k,b}(x)&\approx N_{j,k}^{-2}\tilde a_{j,k,b}(x)\cos(N_{j,k}\eta^j\cdot x), \quad \eta^j\in \Lambda_b
\end{split}
\]
with slowly varying amplitude functions $a_{j,k,u}$, $a_{j,k,b}$ and $\tilde a_{j,k,b}$ to be determined recursively.

As before the solution $(u,b)$ of \eqref{mhd-2d} would consist of the principal part $(v,h)$ and perturbative correction $(w,\zeta)$.
The pair $(v,h)$ is constructed as 
\[v(x,t) = \sum_k v_k(x,t), \quad h(x,t) = \sum_k h_k(x,t)\]
with $v_k$ and $h_k$ given by, up to leading order terms
\[
\begin{split}
v_k(x,t)& \approx -\sum_{j\in \Lambda_u} N_{j,k} e^{-N_{j,k}^2t} \mathbb P\Delta \psi_{j,k,u}(x)-\sum_{j\in \Lambda_b} N_{j,k} e^{-N_{j,k}^2t} \mathbb P\Delta \psi_{j,k,c}(x), \\
h_k(x,t)& \approx \sum_{j\in \Lambda_b} e^{-N_{j,k}^2t} \Delta \psi_{j,k,b}(x)
\end{split}
\]
for $t \geq t_k$. 

The amplitude functions $a_{j,k,u}$, $a_{j,k,b}$ and $\tilde a_{j,k,b}$ are chosen iteratively such that
\begin{equation} \label{ansatz-intro-2dmhd}
\begin{split}
v_k(x,t) &\approx -\int_0^{t} e^{(t-s)\Delta }\mathbb P\div ( v_{k+1}\otimes v_{k+1}-\nabla^\perp h_{k+1}\otimes \nabla^\perp h_{k+1})(s) \, ds,\\
h_k(x,t) &\approx -\int_0^{t} e^{(t-s)\Delta }\div ( v_{k+1} h_{k+1})(s) \, ds.
\end{split}
\end{equation}
Thanks to the scaling separation in \eqref{scale-separation}, we compute by only taking into account self-interactions
\[
\begin{split}
&\quad\int_0^t P_{< N_{1,k+1}}\mathbb P\div( v_{k+1}\otimes v_{k+1}-\nabla^\perp h_{k+1}\otimes \nabla^\perp h_{k+1}) \, ds \\
&\approx P_{< N_{1, k+1}}\mathbb P\div \sum_{j\in \Lambda_u} \mathbb P\Delta \psi_{j,k+1,u} \otimes \mathbb P\Delta \psi_{j,k+1,u}\\
&\quad+P_{< N_{1, k+1}}\mathbb P\div \sum_{j\in \Lambda_b} \left(\mathbb P\Delta \psi_{j,k+1,c} \otimes \mathbb P\Delta \psi_{j,k+1,c}\right)\\
&\quad-P_{< N_{1, k+1}}\mathbb P\div \sum_{j\in \Lambda_b} \left((\tilde a_{j,k+1,b}\eta^{j,\perp}\sin(N_{j,k+1}\eta^j\cdot x))\otimes (\tilde a_{j,k+1,b}\eta^{j,\perp}\sin(N_{j,k+1}\eta^j\cdot x))\right)\\
& \approx \mathbb P\div\sum_{j\in\Lambda_u}a_{j,k+1,u}^2\eta^{j,\perp}\otimes\eta^{j,\perp}+\mathbb P\div\sum_{j\in\Lambda_b}\left(a_{j,k+1,b}^2-\tilde a_{j,k+1,b}^2\right)\eta^{j,\perp}\otimes\eta^{j,\perp}
\end{split}
\]
and 
\[
\begin{split}
&\quad-\int_0^t P_{< N_{1,k+1}}\div( v_{k+1} h_{k+1}) \, ds \\
&\approx -\int_0^t P_{< N_{1,k+1}}( v_{k+1}\cdot \nabla h_{k+1})  \, ds 
\end{split}
\]
for $t\geq t_k$.

The break point is that the leading order term in $v_{k+1}\cdot \nabla h_{k+1}$ vanishes; otherwise, one would be able to use a variation of Lemma \ref{le-geometry-TV} to carry through the iterative process applying convex integration as done in the previous section. 
The vanishing leading order term comes exactly from the geometry of the induction equation in the genuine 2D case. 
This is consistent with what is found in convex integration study for 2D MHD \cite{FaracoLindbergSzekelyhidi2021}.

The fact of the leading order term being vanished in the nonlinear interaction in the magnetic equation seems to indicate that the geometry tends to prevent blowup of the magnetic field. This suggests us to construct solutions $(u,B)$ for the 2D MHD \eqref{mhd-2d} with nontrivial but controlled magnetic field and instantaneous blowup velocity field as stated in Theorem \ref{thm-2dmhd}.



\addtocontents{toc}{\protect\setcounter{tocdepth}{2}}

\section{Building blocks}\label{sec:principal}

This section fixes the frequency hierarchy, the pipe geometry, and the potential profiles used later to build the principal part of the solution. The spatial localization scheme is inherited from \cites{CDP,CoiculescuPalasek2025}; the point here is to record the 2D version we need and to prepare the coupled amplitude construction for the passive scalar.

\subsection{The basic building blocks}\label{frequency_scales_section}


Fix two growth parameters $A>1$ and $b>1$ sufficiently large. We also choose an integer $m_*\in\mathbb N$ later. We normalize the first frequency by $N_{1,0}=1$ and define all remaining scales by
\begin{equation}\label{N_definition}
N_{j,k}=m_*\big\lceil A^{b^{k+(j-1)/J}}\big\rceil
\end{equation}
for $(j,k)\neq(1,0)$, where $\lceil \cdot\rceil$ denotes the ceiling function. If $A$ is large enough in terms of $b$, $d$, and $m_*$, then these scales are strictly ordered in the sense that
\[
N_{j_1,k_1}\ll N_{j_2,k_2}
\]
whenever either $k_1<k_2$, or $k_1=k_2$ and $j_1<j_2$.

We next introduce intermediate scales, which will govern the geometry of the cutoffs. For $k\geq1$ and $j\in\mathcal J$, set
\begin{equation}\label{M_definition}
    M_{j,k}=\left\{
\begin{aligned}
&\big\lceil A^{\gamma b^{k}}\big\rceil, \quad &j=1,\\
&\big\lceil A^{\gamma b^{(j-1)/J}}\big\rceil M_{1,k}, \quad &j\geq2
\end{aligned}
\right.
\end{equation}
where $\gamma\in(0,1)$ will be fixed later. If, in addition,
\begin{align*}
    \gamma> b^{-1/J},
\end{align*}
and $A$ is taken larger if necessary, then we may arrange
\begin{equation}\begin{aligned}\label{N_and_M_ordering}
    A^cN_{j-1,k}\leq M_{j,k}\leq A^{-c}N_{j,k}&,\quad 2\leq j\leq J,\\
    A^cN_{J,k-1}\leq M_{1,k}\leq A^{-c} N_{1,k}&
\end{aligned}\end{equation}
for some $c>0$ (depending on $b$ and $\gamma$).

The geometric arrangement of the building blocks follows the periodic pipe construction from \cites{CDP,CoiculescuPalasek2025}. Choose rational directions $\eta^j\in\mathbb S^1\cap\mathbb Q^2$, $j\in\mathcal J$, so that $\{\eta^j,\eta^{j,\perp}\}$ is an orthonormal basis for each $j$. We now fix $m_*$ so that $m_*\eta^j\in\mathbb Z^2$ for all $j$, which guarantees $N_{j,k}\eta^j\in\mathbb Z^2$ at every scale.

For $\rho<1/10$, let $\mathcal C_{j,k}(\rho)$ denote the $2\pi/M_{j,k}$-periodic cylinder of radius $\rho M_{j,k}^{-1}$ around the periodic line $\mathbb R\eta^j\pmod{2\pi\mathbb Z^2/M_{j,k}}$. Choose $\delta_0>0$ small enough so that
\begin{align}\label{pipe_volume_bound}
|\mathcal C_{j,k}(4\delta_0)|\leq1/(10J).
\end{align}
For each $k\geq1$, choose a $2\pi/M_{j,k}$-periodic cutoff $\varphi_{j,k}\in C_c^\infty(\mathcal C_{j,k}(\delta_0))$ satisfying $\eta^{j,\perp}\cdot\nabla\varphi_{j,k}=0$ and normalized by
\[
(2\pi)^{-2}\int_{\mathbb T^2}\varphi_{j,k}^2(x)\sin^2(N_{j,k}\eta^j\cdot x) \, dx =1, \qquad \forall j,k,
\]
and with
\begin{equation} \label{eq:varphi_bound}
\|\nabla^n \varphi_{j,k}\|_\infty\lesssim M_{j,k}^{n}, \qquad \forall j,k.
\end{equation}
The active regions are then defined by
\begin{align*}
    \Omega_k=\bigcap_{k'=1}^k\bigcup_{j\in\mathcal J}\mathcal C_{j,k'}((3-2^{-(k-k')})\delta_0),\quad \widetilde\Omega_k=\bigcap_{k'=1}^k\bigcup_{j\in \mathcal J}\mathcal C_{j,k'}((3-\frac342^{-(k-k')})\delta_0),
\end{align*}
with the convention $\Omega_0=\widetilde\Omega_0=\mathbb T^2$.

\begin{lemma}[\cite{CDP}]
For any $k\geq0$, we have
    \begin{align}
    |\Omega_{k}|\leq2^{-k}|\mathbb T^2|.\label{Omega_volume_estimate}
\end{align}
Moreover, there is a constant $C_0>1$ such that if $Q\subset\mathbb T^2$ is a cube of length $\ell(Q)\in [C_0M_{1,k_0}^{-1},2\pi)$, then
\begin{align}\label{cube_intersection_volume_bound}
    |\Omega_{k}\cap Q|\leq2^{-(k-k_0)}|Q|\qquad\forall k\geq k_0.
\end{align}
\end{lemma}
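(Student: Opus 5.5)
Both bounds come from the geometry of the pipes $\mathcal C_{j,k}$ together with the separation of scales \eqref{N_and_M_ordering}, following the argument in \cite{CDP}. The key observation is that, for each fixed $k'$, the set
\[
U_{k'}\coloneqq\bigcup_{j\in\mathcal J}\mathcal C_{j,k'}(3\delta_0)
\]
has small density. Each $\mathcal C_{j,k'}(3\delta_0)$ is $2\pi/M_{j,k'}$-periodic and has density at most $1/(10J)$ by \eqref{pipe_volume_bound}, since it sits inside $\mathcal C_{j,k'}(4\delta_0)$. The union over $j\in\mathcal J$ therefore has total density at most $1/10$. Since $\Omega_k\subset\bigcap_{k'=1}^k U_{k'}$, it suffices to show that successive levels $U_{k'}$ are nearly independent: intersecting with $U_{k'}$ should cut the measure of a set by a factor of at least $1/2$, provided that set is essentially constant at the scale $M_{1,k'}^{-1}$.

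\textbf{Proof of \eqref{cube_intersection_volume_bound}.} I would argue by induction on $k\ge k_0$. The base case $k=k_0$ is trivial. For the inductive step, set $E_{k}\coloneqq Q\cap\bigcap_{k'=k_0+1}^{k}U_{k'}$. The set $E_k$ is a union of pieces of pipes at levels at most $k$, whose cross-sectional widths are at least of order $M_{J,k}^{-1}$. By \eqref{N_and_M_ordering}, these widths are much larger than the periods $2\pi/M_{j,k+1}$ of the level-$(k+1)$ pipes.

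Cover $E_k$ by a grid of cubes of side $L$ with $M_{1,k+1}^{-1}\ll L\ll M_{J,k}^{-1}$. Such an $L$ exists because of the gap $A^{c}$ in \eqref{N_and_M_ordering}. Inside each grid cube, every $\mathcal C_{j,k+1}(3\delta_0)$ is a family of parallel strips with period at most $2\pi/M_{1,k+1}\ll L$. Hence its density in the cube is at most $1/(10J)+O(M_{1,k+1}^{-1}L^{-1})$, and the union over $j$ has density at most $1/10+o(1)\le 1/2$.

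The grid cubes that meet $\partial E_k$ have total measure $O(L\cdot\mathcal H^1(\partial E_k))$. I would bound $\mathcal H^1(\partial E_k)$ by $\lesssim \ell(Q)^2 M_{J,k}$, counting pipe boundaries. This error is then $\lesssim L M_{J,k}\,|Q|$, which is small relative to the main term. The error is absorbed into the slack between $1/10$ and $1/2$ once $A$ is large. This gives
\[
|E_{k+1}|\le \tfrac12 |E_k|,
\]
and iterating yields the factor $2^{-(k-k_0)}$. For $k=k_0+1$, the same argument applies directly to the cube $Q$, using $\ell(Q)\ge C_0M_{1,k_0}^{-1}\gg M_{1,k_0+1}^{-1}$. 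The boundary-layer error is controlled by the choice of $C_0$.

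\textbf{Proof of \eqref{Omega_volume_estimate}.} This is the case $Q=\mathbb T^2$ with $k_0=0$ and $\Omega_0=\mathbb T^2$. The same induction applies with the convention $M_{J,0}\sim1$.

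\textbf{Main obstacle.} The delicate step is the boundary-layer bookkeeping: verifying that $\mathcal H^1(\partial E_k)$ grows only like $M_{J,k}$, times the number of levels, so that after multiplication by $L$ it is absorbed into the constant-factor slack at every step uniformly in $k$. I would handle this by choosing $L=(M_{J,k}M_{1,k+1})^{-1/2}$. With this choice both the density error $M_{1,k+1}^{-1}L^{-1}$ and the boundary error $LM_{J,k}$ are $\lesssim A^{-c/2}$, and the accumulated losses are summable.
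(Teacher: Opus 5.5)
\textbf{The inductive step has a gap.} The claim $|E_{k+1}|\le\frac12|E_k|$ does not follow from your estimates. On grid cubes $q\subset E_k$ your bound is multiplicative: $|U_{k+1}\cap q|\le(\frac1{10}+o(1))|q|$. The boundary layer, however, is an \emph{additive} error of size $\lesssim L\,\mathcal H^1(\partial E_k)\lesssim LM_{J,k}|Q|$. It is measured against $|Q|$, whereas the slack you absorb it into is $\frac{4}{10}|E_k|$.

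Since $|E_k|\le 2^{-(k-k_0)}|Q|$, and heuristically $|E_k|$ is even smaller (about $10^{-(k-k_0)}|Q|$), the absorption needs one of two things: a lower bound $|E_k|\gtrsim LM_{J,k}|Q|$, or a perimeter bound $\mathcal H^1(\partial E_k)\lesssim M_{J,k}|E_k|$ in place of $\lesssim M_{J,k}|Q|$. An induction that only propagates upper bounds never supplies either. With the uniform error $LM_{J,k}\lesssim A^{-c/2}$ from your last paragraph, the absorption inequality is simply false once $2^{-(k-k_0)}\lesssim A^{-c/2}$, i.e.\ for $k-k_0\gtrsim\log A$. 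Summability of the losses does not rescue it, because the target $2^{-(k-k_0)}|Q|$ itself decays geometrically. The same objection applies to your derivation of \eqref{Omega_volume_estimate}.

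\textbf{Two ways to repair it.} The first keeps your fine-level peeling but runs the recursion additively:
$|E_{k+1}|\le(\tfrac1{10}+o(1))|E_k|+C\epsilon_k|Q|$, with $\epsilon_k=(M_{J,k}/M_{1,k+1})^{1/2}$. You then need the fact that, for the actual scales in \eqref{N_definition}--\eqref{M_definition}, the ratio $M_{1,k+1}/M_{J,k}$ grows like $A^{cb^k}$. Hence $\epsilon_k$ decays superexponentially and is dominated by $2^{-(k-k_0)}$. A fixed gap $A^{c}$ as in \eqref{N_and_M_ordering} would not suffice.

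The cleaner route peels off the \emph{coarsest} level and uses the cube statement one level up as the inductive hypothesis. This is the reason the lemma is stated for \emph{all} cubes of side at least $C_0M_{1,k_0}^{-1}$. Prove $|Q\cap\bigcap_{k'=k_0}^{k}U_{k'}|\le 2^{-(k-k_0+1)}|Q|$ by induction on $k-k_0$, as follows.
\begin{enumerate}
\item Cover $Q$ by grid cubes $Q'$ of side $C_0M_{1,k_0+1}^{-1}$. By \eqref{N_and_M_ordering} this side is $\ll\delta_0M_{J,k_0}^{-1}$.
\item Hence every $Q'$ meeting $U_{k_0}$ lies in $\bigcup_j\mathcal C_{j,k_0}(4\delta_0)$.
\item The density of $\bigcup_j\mathcal C_{j,k_0}(4\delta_0)$ in a slight enlargement of $Q$ is at most $\frac1{10}+O(C_0^{-1})\le\frac12$, because $\ell(Q)\ge C_0M_{1,k_0}^{-1}$ exceeds the level-$k_0$ periods.
\item On each such $Q'$, the hypothesis at level $k_0+1$ gives $|Q'\cap\bigcap_{k'=k_0+1}^{k}U_{k'}|\le 2^{-(k-k_0)}|Q'|$.
\end{enumerate}
Summing over the $Q'$ gives the claim. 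All losses are multiplicative, and no boundary bookkeeping for the complicated set $E_k$ is needed. Tiling $\mathbb T^2$ exactly in the same way gives \eqref{Omega_volume_estimate}.
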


Finally, let $\chi_k\in C_c^\infty(\widetilde\Omega_{k-1})$ be cutoffs with $\chi_k\equiv1$ on $\Omega_{k-1}$ and
\begin{align} \label{eq:chi_k_bound}
    \|\nabla^n\chi_k\|_\infty\lesssim_nM_{J,k-1}^n.
\end{align}

\subsection{The velocity and scalar potentials}\label{sec:potential}

Let $\phi_k$ be a standard mollifier at scale
\[
\ell_k:=N_{1,k}^{-\frac12}N_{1,k+1}^{-\frac12}.
\]
The choice of $b$ guarantees $\ell_k\leq N_{J,k}^{-1}$. We now introduce three families of potential profiles, corresponding to the velocity piece, the passive scalar piece, and the coupling term:
\begin{align}
\psi_{j,0,u}(x)&\coloneqq N_{j,0}^{-2}a_{j,0,u}(x)\eta^{j,\perp}\sin(N_{j,0}\eta^j\cdot x), \qquad \eta_j\in \Lambda_u, \label{def_psi_0}\\
\psi_{j,k,u}(x)&\coloneqq N_{j,k}^{-2}\phi_k*(a_{j,k,u}(x)\varphi_{j,k}(x)\eta^{j,\perp}\sin(N_{j,k}\eta^j\cdot x)),\quad \eta_j\in \Lambda_u, k\geq1, \label{def_psi}
\end{align}
\begin{align}
\psi_{j,0,b}(x)&\coloneqq N_{j,0}^{-2}\sin(N_{j,0}\eta^j\cdot x), \qquad \eta_j\in \Lambda_b, \label{def_psi_0b}\\
\psi_{j,k,b}(x)&\coloneqq N_{j,k}^{-2}\phi_k*(\varphi_{j,k}(x)\sin(N_{j,k}\eta^j\cdot x)),\quad \eta_j\in \Lambda_b, k\geq1, \label{def_psi_b}
\end{align}
\begin{align}
\psi_{j,0,c}(x)&\coloneqq N_{j,0}^{-2}a_{j,0,b}(x)\eta^{j,\perp}\sin(N_{j,0}\eta^j\cdot x), \qquad \eta_j\in \Lambda_b,\label{def_psi_0c}\\
\psi_{j,k,c}(x)&\coloneqq N_{j,k}^{-2}\phi_k*(a_{j,k,b}(x)\varphi_{j,k}(x)\eta^{j,\perp}\sin(N_{j,k}\eta^j\cdot x)),\quad \eta_j\in \Lambda_b, k\geq1,\label{def_psi_c}
\end{align}
The amplitudes $a_{j,k,u}$ and $a_{j,k,b}$ are chosen inductively so that
\begin{equation}\label{a-convex-integration-identity}
\begin{split}
\sum_{j\in \Lambda_u}a_{j,k+1,u}^2\eta^{j,\perp}\otimes\eta^{j,\perp} &=2 \mathcal D \sum_{j\in\Lambda_u} N_{j,k}\psi_{j,k,u}+p\Id, \\
\sum_{j\in\Lambda_b}a_{j,k+1,b}^2\eta^{j,\perp}\otimes\eta^{j,\perp}&=2 \mathcal D \sum_{j\in\Lambda_b}  N_{j,k}\psi_{j,k,c}+p\Id,\\
\sum_{j\in\Lambda_b}a_{j,k+1,b}\eta^{j,\perp} &=2\nabla \sum_{j\in\Lambda_b}  N_{j,k}\psi_{j,k,b}
\end{split}
\end{equation}
for some scalar function $p(x)$,
\begin{equation} \label{eq:a_bounds}
    \|\nabla^na_{j,k,u}\|_\infty+  \|\nabla^na_{j,k,b}\|_\infty\lesssim N_{J,k-1}^{n}, \qquad \forall j,k,
\end{equation}
and
\begin{equation}\label{a-support}
    \supp a_{j,k,u}\subset \widetilde\Omega_{k-1}, \qquad \supp a_{j,k,b}\subset \widetilde\Omega_{k-1}.
\end{equation}
The next lemma packages the properties of these amplitudes.

To streamline notation, we will also write
\[a_{j,k}=a_{j,k,u} \qquad \mbox{for}\quad \eta^j\in \Lambda_u; \qquad a_{j,k}=a_{j,k,b} \qquad \mbox{for}\quad \eta^j\in \Lambda_b\]
and likewise
\[\psi_{j,k}=\psi_{j,k,u} \qquad \mbox{for}\quad \eta^j\in \Lambda_u; \qquad \psi_{j,k}=\psi_{j,k,b} \qquad \mbox{for}\quad \eta^j\in \Lambda_b,\]
while $\psi_{j,k}=\psi_{j,k,c}$ when we refer to the coupling term.

\begin{lemma}
    For $j\in\mathcal J$ and $k\geq0$, there exist coefficient functions $a_{j,k,u}, a_{j,k,b}\in C^\infty(\mathbb T^2;\mathbb R)$ satisfying \eqref{a-convex-integration-identity}, \eqref{eq:a_bounds} and \eqref{a-support}. Further, $\supp\psi_{j,k}\subset \Omega_k$ and
    \begin{align}\label{eq:Dpsi_bounds}
        \|\nabla^n\psi_{j,k}\|_{L^\infty}\lesssim_n N_{j,k}^{-2+n},
    \end{align}
    for $n\geq0$.
\end{lemma}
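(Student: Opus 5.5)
We argue by induction on $k$, mirroring the amplitude construction in \cite{CDP}. At $k=0$ the profiles \eqref{def_psi_0}, \eqref{def_psi_0b}, \eqref{def_psi_0c} have no cutoffs. We take $a_{j,0,u}$ and $a_{j,0,b}$ from the initial data, or simply as fixed smooth functions. Then \eqref{eq:a_bounds} holds trivially with $N_{J,-1}:=1$, \eqref{a-support} is vacuous since $\widetilde\Omega_{-1}=\mathbb T^2$, and \eqref{eq:Dpsi_bounds} follows because each derivative of $\sin(N_{j,0}\eta^j\cdot x)$ costs $N_{j,0}$ while each derivative of $a_{j,0}$ costs $O(1)$.

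For the inductive step, suppose $a_{j,k,\cdot}$ have been constructed satisfying \eqref{eq:a_bounds} and \eqref{a-support}. The first task is to verify \eqref{eq:Dpsi_bounds} and the support property at level $k$.
\begin{itemize}
\item \emph{Bounds.} Differentiating \eqref{def_psi}, \eqref{def_psi_b}, \eqref{def_psi_c}, derivatives falling on the phase cost $N_{j,k}$, on $\varphi_{j,k}$ cost $M_{j,k}\le N_{j,k}$ by \eqref{N_and_M_ordering}, and on $a_{j,k}$ cost $N_{J,k-1}\le N_{j,k}$. Mollification does not increase $L^\infty$ norms, which gives \eqref{eq:Dpsi_bounds}.
\item \emph{Support.} The set $\supp(a_{j,k}\varphi_{j,k})$ lies in $\widetilde\Omega_{k-1}\cap\mathcal C_{j,k}(\delta_0)$. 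Since $\ell_k\le N_{J,k}^{-1}\ll \delta_0 M_{j,k}^{-1}$ and $\ell_k\ll M_{J,k-1}^{-1}$, fattening by $\ell_k$ stays inside the slightly larger cylinders $\mathcal C_{j,k'}((3-2^{-(k-k')})\delta_0)$ used to define $\Omega_k$. This is the same bookkeeping as in \cite{CDP}, and it gives $\supp\psi_{j,k}\subset\Omega_k$.
\end{itemize}

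Next we define the level $k+1$ amplitudes. Set
\[
R_u:=2\mathcal D\sum_{j\in\Lambda_u}N_{j,k}\psi_{j,k,u},\quad
R_b:=2\mathcal D\sum_{j\in\Lambda_b}N_{j,k}\psi_{j,k,c},\quad
g:=2\nabla\sum_{j\in\Lambda_b}N_{j,k}\psi_{j,k,b}.
\]
By \eqref{eq:Dpsi_bounds}, $\|\nabla^n R_u\|_\infty+\|\nabla^n R_b\|_\infty+\|\nabla^n g\|_\infty\lesssim N_{J,k}^n$, and the $n=0$ norms are $O(1)$ with implicit constant $C$. Smallness of these tensors is not needed, only a fixed bound. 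To place them in the domain of the geometric lemmas, choose $\lambda$ large depending only on that fixed constant, so that $\Id+\lambda^{-1}R$ lies in $B_{\varepsilon}(\Id)$. Then:
\begin{itemize}
\item apply Lemma~\ref{le-geometry} to $\Id+\lambda^{-1}R_u$, obtaining $a_{j,k+1,u}=\sqrt\lambda\,\chi_{k+1}\Gamma_j(\Id+\lambda^{-1}R_u)$;
\item apply Lemma~\ref{le-geometry-TV} to the pair $(\Id+\lambda^{-1}R_b,\lambda^{-1/2}g)$, obtaining $a_{j,k+1,b}=\sqrt\lambda\,\chi_{k+1}\Gamma_m(\cdot,\cdot)$.
\end{itemize}
Scaling back, the tensor identities in \eqref{a-convex-integration-identity} hold with $p$ absorbing $\lambda\Id$ and $\lambda p(R,g)\Id$. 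The vector identity holds exactly, since $\sqrt\lambda\sum\Gamma_m\eta^{m,\perp}=\sqrt\lambda\cdot\lambda^{-1/2}g=g$. Because the same $\Gamma_m$ appear linearly in the vector identity and quadratically in the tensor identity, the scalings $\lambda$ on $R_b$ and $\lambda^{-1/2}$ on $g$ are exactly what make the two identities compatible. Alternatively, one can absorb the normalisation into the constants defining the pipes.

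The cutoff $\chi_{k+1}$ deserves care. The identities only need to hold where the right-hand sides are supported, namely on $\Omega_k$, and there $\chi_{k+1}\equiv1$. Off $\Omega_k$ the right-hand sides vanish, so multiplying by $\chi_{k+1}$ is consistent up to the $p\Id$ term. The resulting $p$ is therefore $\chi_{k+1}^2$ times a constant, and nonconstant $p$ is allowed. The support property \eqref{a-support} then follows from $\supp\chi_{k+1}\subset\widetilde\Omega_k$.

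Finally, \eqref{eq:a_bounds} at level $k+1$ follows by the chain rule (Fa\`a di Bruno) applied to the smooth maps $\Gamma$ on a compact subset of their domain, using the bounds on $R_u,R_b,g$ together with \eqref{eq:chi_k_bound}, where $M_{J,k}\le N_{J,k}$.

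The main obstacle is the uniform-in-$k$ control of the amplitudes. The $L^\infty$ size of $R_u$, $R_b$ and $g$ must stay bounded by a constant independent of $k$, so that $\lambda$ can be fixed once and for all. This requires the normalisation of $\varphi_{j,k}$ and the a priori bound $\|a_{j,k}\|_\infty\le C$ to reproduce themselves: the bound on the level $k+1$ amplitudes must return to the same constant $C$ rather than growing. I would close this loop by showing $\|a_{j,k+1}\|_\infty\lesssim\sqrt\lambda$ with $\lambda$ depending only on $\sup\|\Gamma\|$ and on $\|\varphi\|_\infty$, which is scale invariant.
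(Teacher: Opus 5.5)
Your construction is the paper's. Your $\lambda$ is the paper's $c^{-1}$. The amplitudes $\sqrt\lambda\,\chi_{k+1}\Gamma(\cdot)$, the $\lambda^{-1/2}$ weight on $g$ that reconciles the linear and quadratic identities, and the $\chi_{k+1}$ support mechanism are exactly Steps 1--2 of the paper's proof.

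Two points are in your favour:
\begin{itemize}
\item You evaluate the maps of Lemma~\ref{le-geometry-TV} at $\Id+\lambda^{-1}R_b$. The paper writes $(cS_{k,c},c^{1/2}S_{k,b})$, which does not lie in $B_{\varepsilon_*}(\Id)$.
\item Your direct derivative count (mollification commutes with $\nabla$ and contracts $L^\infty$) is cleaner than the paper's rough-bound-then-bootstrap detour.
\end{itemize}
Your uniform-in-$k$ closure is what \eqref{eq:condition_on_c} encodes. As you first phrase it (``choose $\lambda$ depending on that fixed constant'') it is circular. It closes only because $\|R\|_\infty\lesssim\|a_{j,k}\|_\infty\lesssim\sqrt\lambda$, so $\lambda^{-1}\|R\|_\infty\lesssim\lambda^{-1/2}$, which is small for large $\lambda$.

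There is, however, a step that fails, and it fails identically in the paper's proof.
\begin{itemize}
\item Your support bullet uses $\supp a_{j,k}\subset\widetilde\Omega_{k-1}$. But by \eqref{def_psi_b} the scalar profile $\psi_{j,k,b}=N_{j,k}^{-2}\phi_k*(\varphi_{j,k}\sin(N_{j,k}\eta^j\cdot x))$ carries no amplitude.
\item Its support is an $\ell_k$-neighbourhood of the full $2\pi/M_{j,k}$-periodic family $\mathcal C_{j,k}(\delta_0)$. That family is spread over all of $\mathbb T^2$ at scale $M_{j,k}^{-1}$. By \eqref{eq:varphi_bound} and the $L^2$ normalisation of $\varphi_{j,k}$, its measure is bounded below uniformly in $k$.
\item By contrast, $\Omega_k$ is confined to the level-$1,\dots,k-1$ pipes, and $|\Omega_k|\le 2^{-k}|\mathbb T^2|$ by \eqref{Omega_volume_estimate}.
\end{itemize}
So $\supp\psi_{j,k,b}\not\subset\Omega_k$ for $k\ge2$. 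The inclusion claimed in the statement is false for the $b$-profiles as defined.

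Consequently your sentence ``off $\Omega_k$ the right-hand sides vanish'' is true for the two tensor identities but false for $g$. On level-$k$ pipes outside $\widetilde\Omega_k$ you get $\sum_j a_{j,k+1,b}\eta^{j,\perp}=\chi_{k+1}g=0\neq g$. So the third identity in \eqref{a-convex-integration-identity} fails there. Dropping $\chi_{k+1}$ would instead destroy \eqref{a-support}. The paper asserts the same inclusion (``the same support argument as above''), so this cannot be repaired inside the proof.

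The definition \eqref{def_psi_b} itself must change. One option is to insert a cutoff $\tilde\chi_k$ with $\tilde\chi_k\equiv1$ on $\widetilde\Omega_{k-1}$ and support in $\bigcap_{k'<k}\bigcup_j\mathcal C_{j,k'}\big((3-\tfrac54 2^{-(k-k')})\delta_0\big)$. There is room for this, since $\widetilde\Omega_{k-1}$ uses radii $(3-\tfrac32 2^{-(k-k')})\delta_0$ while $\Omega_k$ uses $(3-2^{-(k-k')})\delta_0$. Then $\tilde\chi_k a_{j,k,b}=a_{j,k,b}$, so $\bar v_{j,k}\bar h_{j,k}$ still produces $a_{j,k,b}\eta^{j,\perp}$ at leading order, and your support argument applies to $\psi_{j,k,b}$ verbatim.
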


\begin{proof}
    We construct the $u$-amplitudes and the $b$-amplitudes separately.

    \smallskip
    \noindent\textit{Step 1: the $u$-amplitudes.}
    We follow Lemma 4.2 of \cite{CDP} and only record the pieces needed later in the paper.

    Set
    \[
    a_{1,0,u}(x)=1,\qquad a_{j,0,u}(x)=0 \quad \text{for } j\neq1,
    \]
    and for $k\geq0$ define recursively
    \[
    a_{j,k+1,u}(x)=c^{-\frac12}\chi_{k+1}(x)\Gamma_j \big(\Id+cS_{k,u}(x)\big),
    \qquad
    S_{k,u}\coloneqq2\mathcal D \sum_{j\in\Lambda_u} N_{j,k}\psi_{j,k,u}.
    \]
    Here $\psi_{j,k,u}$ is given by \eqref{def_psi_0}--\eqref{def_psi}. Choosing the absolute constant $c>0$ small enough so that
    \begin{equation} \label{eq:condition_on_c}
    c\|S_{k,u}\|_\infty \leq c_0 \qquad \forall k
    \end{equation}
    keeps the argument inside the range of Lemma \ref{le-geometry}. The proof of Lemma 4.2 in \cite{CDP} then gives the bounds \eqref{eq:a_bounds} and \eqref{eq:Dpsi_bounds} for $a_{j,k,u}$ and $\psi_{j,k,u}$.

    Applying Lemma \ref{le-geometry} yields
    \begin{align*}
        \sum_{j\in\Lambda_u} a_{j,k+1,u}^2\eta^{j,\perp}\otimes\eta^{j,\perp}
        &=c^{-1}\chi_{k+1}^2(x) \sum_{j\in\Lambda_u} \Gamma_j^2\big(\Id+cS_{k,u}(x)\big)\eta^{j,\perp}\otimes\eta^{j,\perp}\\
        &=\chi_{k+1}^2(x)(c^{-1}\Id+S_{k,u}(x)).
    \end{align*}
    Thus the first identity in \eqref{a-convex-integration-identity} will follow once we know that $\chi_{k+1}\equiv1$ on $\supp S_{k,u}$. For this, observe that
    \[
    \supp\psi_{j,k,u}\subset (\supp a_{j,k,u}\cap\supp\varphi_{j,k})+B(0,\ell_k),
    \]
    where the sum is in the Minkowski sense. Using \eqref{a-support}, the inclusion $\supp\varphi_{j,k}\subset\mathcal C_{j,k}(\delta_0)$, and the definitions of $\Omega_k$ and $\widetilde\Omega_k$, we obtain
    \[
    \supp\psi_{j,k,u}\subset \Omega_k
    \]
    provided $\ell_kM_{j,k'}\ll2^{-(k-k')}$ for every $j$ and $k'\leq k$. Hence $\supp S_{k,u}\subset\Omega_k$, so $\chi_{k+1}\equiv1$ on $\supp S_{k,u}$, and the first identity follows. The factor $\chi_{k+1}$ also gives \eqref{a-support} for $a_{j,k+1,u}$.

    \smallskip
    \noindent\textit{Step 2: the coupled $b$-amplitudes.}
    This is the new part, where the tensor and vector constraints must be realized by the same coefficients.

    Set
    \[
    a_{j,0,b}\equiv 0,\qquad j\in\Lambda_b,
    \]
    and for $k\ge0$ define
    \[
    S_{k,c}\coloneqq 2\mathcal D\sum_{j\in\Lambda_b}N_{j,k}\psi_{j,k,c},\qquad
    S_{k,b}\coloneqq 2\nabla\sum_{j\in\Lambda_b}N_{j,k}\psi_{j,k,b}.
    \]
    Let $\Gamma_j(R,g)>0$ and $p(R,g)$ be the coefficient maps from Lemma \ref{le-geometry-TV}. We then set
    \[
    a_{j,k+1,b}(x)\coloneqq c^{-1/2}\chi_{k+1}(x)\Gamma_j\!\Big(cS_{k,c}(x),\,c^{\frac12}S_{k,b}(x)\Big).
    \]

    From the definitions \eqref{def_psi_0b}--\eqref{def_psi_c} we first obtain the rough bounds
    \[
    \|\nabla^n\psi_{j,k,b}\|_\infty+\|\nabla^n\psi_{j,k,c}\|_\infty
    \lesssim N_{j,k}^{-2}\ell_k^{-n}c^{-1/2},
    \]
    and therefore
    \[
    \|\nabla^nS_{k,c}\|_\infty+\|\nabla^nS_{k,b}\|_\infty
    \lesssim N_{1,k}^{-1}\ell_k^{-n-1}c^{-1/2}.
    \]
    Choosing $c>0$ sufficiently small keeps the coupled input admissible. Applying the multivariate Fa\`a di Bruno formula to the composed coefficient maps gives
    \[
    \|\nabla^n a_{j,k+1,b}\|_\infty\lesssim c^{\frac{n-1}{2}}N_{1,k}^{-n}\ell_k^{-2n}.
    \]
    As in the $u$-part, this rough estimate can then be bootstrapped, using $\ell_{k-1}^{-1}=N_{1,k-1}^{1/2}N_{1,k}^{1/2}$ and $M_{j,k}\le N_{j,k}$, to the sharper bounds
    \[
    \|\nabla^n\psi_{j,k,b}\|_\infty+\|\nabla^n\psi_{j,k,c}\|_\infty
    \lesssim N_{j,k}^{-2}c^{-1/2}N_{j,k}^{n},
    \]
    \[
    \|\nabla^nS_{k,c}\|_\infty+\|\nabla^nS_{k,b}\|_\infty
    \lesssim c^{-1/2}N_{J,k}^{n},
    \]
    and hence
    \[
    \|\nabla^na_{j,k+1,b}\|_\infty\lesssim_n N_{J,k}^{n}.
    \]
    This gives \eqref{eq:a_bounds} for the $b$-amplitudes as well.

    Applying Lemma \ref{le-geometry-TV} at
    \[
    (R,g)=\big(cS_{k,c}(x),\,c^{\frac12}S_{k,b}(x)\big),
    \]
    we obtain
    \begin{align*}
    \sum_{j\in\Lambda_b}a_{j,k+1,b}^2\eta^{j,\perp}\otimes\eta^{j,\perp}
    &=\chi_{k+1}^2\Big(S_{k,c}-c^{-1} p(cS_{k,c},c^{\frac12}S_{k,b})\Id\Big),\\
    \sum_{j\in\Lambda_b}a_{j,k+1,b}\eta^{j,\perp}
    &=\chi_{k+1}S_{k,b}.
    \end{align*}
    The same support argument as above gives
    \[
    \supp\psi_{j,k,b}\subset\Omega_k,\qquad \supp\psi_{j,k,c}\subset\Omega_k,
    \]
    hence $\supp S_{k,c},\supp S_{k,b}\subset\Omega_k$. Therefore $\chi_{k+1}\equiv1$ on these supports, so
    \[
    \chi_{k+1}^2S_{k,c}=S_{k,c},\qquad
    \chi_{k+1}S_{k,b}=S_{k,b},
    \]
    and the second and third identities in \eqref{a-convex-integration-identity} follow, with the scalar term absorbed into the pressure.

    Finally, \eqref{a-support} for $a_{j,k,b}$ is immediate from the factor $\chi_{k+1}$ in its definition, and the support inclusions above show
    \[
    \supp\psi_{j,k,u},\supp\psi_{j,k,b},\supp\psi_{j,k,c}\subset\Omega_k.
    \]
\end{proof}

\section{The principal part of the solution}\label{sec:est}

\subsection{Definition of the principal fields}
We now pass from the building blocks of Section~4 to the fields that drive the
principal cascade. We begin with the velocity and passive-scalar pieces,
\begin{equation}\label{app-prin}
\begin{split}
\bar v_k(x,t)&= \sum_{j\in\Lambda_u} \bar v_{j,k}(x,t) +\sum_{j\in\Lambda_b} \bar v_{j,k}(x,t)\\
&= \sum_{j\in\Lambda_u} -N_{j,k} e^{-N_{j,k}
^2t} \Delta \psi_{j,k,u}(x)+\sum_{j\in\Lambda_b} -N_{j,k} e^{-N_{j,k}
^2t} \Delta \psi_{j,k,c}(x),\\
\bar h_k(x,t)&= \sum_{j\in\Lambda_b} \bar h_{j,k}(x,t) = \sum_{j\in\Lambda_b} -N_{j,k} e^{-N_{j,k}
^2t} \Delta \psi_{j,k,b}(x)
\end{split}
\end{equation}
for each $k\geq0$. The corresponding principal contributions are obtained by
feeding the next frequency level into the Duhamel formulas
\begin{equation}\label{prin}
\begin{split}
v_k(x,t) &= -\int_0^{t} e^{(t-s)\Delta }\mathbb P\div (\bar v_{k+1}\otimes \bar v_{k+1})(s) \, ds,\\
h_k(x,t) &= -\int_0^{t} e^{(t-s)\Delta }\div (\bar v_{k+1} \bar h_{k+1})(s) \, ds
\end{split}
\end{equation}
for $k\geq 0$.

Summing over all levels gives the full principal pair
\begin{align}\label{full-prin}
    v(x,t)=\sum_{k=0}^\infty v_k(x,t), \qquad h(x,t)=\sum_{k=0}^\infty h_k(x,t).
\end{align}
We likewise set
\begin{align}\label{full-app}
    \bar v(x,t)=\sum_{k=0}^\infty \bar v_k(x,t), \qquad  \bar h(x,t)=\sum_{k=0}^\infty \bar h_k(x,t).
\end{align}

For later estimates it is convenient to keep track of inverse-divergence
primitives. We therefore introduce tensor fields $R_k$, $\bar R_k$, and vectors $H_k$,
$\bar H_k$ through
\begin{equation*}
v_k=\div R_k, \qquad \bar v_k=\div\bar R_k, \qquad h_k=\div H_k, \qquad \bar h_k=\div\bar H_k.
\end{equation*}
More explicitly, we set
\begin{equation}\label{tensor-app}
\begin{split}
\bar R_k(x,t)
& = \sum_{j\in\Lambda_u} -N_{j,k} e^{-N_{j,k}^2t}  \newD  \psi_{j,k,u}(x)+\sum_{j\in\Lambda_b} -N_{j,k} e^{-N_{j,k}^2t}  \newD  \psi_{j,k,c}(x),\\
&=: \bar R_{k,u}(x,t)+ \bar R_{k,c}(x,t),\\
\bar H_k(x,t)&= \sum_{j\in\Lambda_b} \bar H_{j,k}(x,t)
 = \sum_{j\in\Lambda_b} -N_{j,k} e^{-N_{j,k}^2t}  \nabla  \psi_{j,k,b}(x)
\end{split}
\end{equation}
and
\begin{equation}\label{tensor-RH}
\begin{split}
R_k(x,t) &= -\int_0^{t} e^{(t-s)\Delta }\mathcal R\mathbb P\div (\bar v_{k+1}\otimes \bar v_{k+1})(s) \, ds,\\
H_k(x,t) &= -\int_0^{t} e^{(t-s)\Delta }\mathcal R_1\div (\bar v_{k+1} \bar h_{k+1})(s) \, ds
\end{split}
\end{equation}
where $\mathcal R$ is the inverse-divergence operator from Subsection
\ref{sec:notation} and
\[
\mathcal R_1=\Delta^{-1}\nabla.
\]

The next subsection compares $(v,h)$ with its approximation
$(\bar v,\bar h)$. That comparison shows that the principal pair solves a
forced system whose error terms are small enough for the later perturbative
argument.

\subsection{Error estimates}

We begin by estimating the tensor fields $(\bar R_k,\bar H_k)$. The
argument separates into an early-time and a late-time regime, with the dividing
scale
\begin{equation}\label{time-sequence}
N_{1, k+1}^{-2} \ll t_k=N_{J_d,k}^{-4} \ll N_{J,k}^{-3}.
\end{equation}
\begin{proposition}\label{barv-I_estimate_proposition}
Denote
\[
\begin{split}
\mathcal{I}_{k,u} &= -\int_0^{t} e^{(t-s)\Delta }\sum_{j\in \Lambda_u} N_{j,k+1}^2 e^{-2N_{j,k+1}^2s} \mathbb Q (a_{j,k+1,u}^2 \eta^{j,\perp}\otimes \eta^{j,\perp}) \, ds,\\
\mathcal{I}_{k,c} &= -\int_0^{t} e^{(t-s)\Delta }\sum_{j\in \Lambda_b} N_{j,k+1}^2 e^{-2N_{j,k+1}^2s} \mathbb Q \!\left(a_{j,k+1,b}^2 (\eta^{j,\perp}\otimes \eta^{j,\perp})\right) \, ds,\\
\mathcal{I}_{k,b} &= -\int_0^{t} e^{(t-s)\Delta }\sum_{j\in \Lambda_b} N_{j,k+1}^2 e^{-2N_{j,k+1}^2s} \mathcal R_1\div \!\left(a_{j,k+1,b} \eta^{j,\perp}\right) \, ds.
\end{split}
\]
Then for any $\varepsilon_0 >0$, $\alpha \in(0, \frac{1}{10})$, and $\bar n \in \mathbb{N}$, the following bounds hold for all $t\geq 0$:
\[
\|\nabla^n(\bar R_k(t) - \mathcal{I}_{k,u}(t)- \mathcal{I}_{k,c}(t))\|_{L^\infty} \leq \varepsilon_0 N^{-\alpha}_{1,k}(t^{-\frac{n}{2} +\alpha}+1), \qquad  n=1,2,\dots,\bar n,
\]
\[
\|\bar R_k(t) - \mathcal{I}_{k,u}(t)- \mathcal{I}_{k,c}(t)\|_{L^\infty} \lesssim_\varepsilon \varepsilon_0N_{1,k}^{-\alpha}+\mathbbm{1}_{t\leq t_k}N_{J, k}^{\varepsilon}, 
\]
\[
\|\nabla^n(\bar H_k(t) - \mathcal{I}_{k,b}(t))\|_{L^\infty} \leq \varepsilon_0 N^{-\alpha}_{1,k}(t^{-\frac{n}{2} +\alpha}+1), \qquad  n=1,2,\dots,\bar n,
\]
\[
\|\bar H_k(t) - \mathcal{I}_{k,b}(t)\|_{L^\infty} \lesssim_\varepsilon \varepsilon_0N_{1,k}^{-\alpha}+\mathbbm{1}_{t\leq t_k}N_{J, k}^{\varepsilon},
\]
provided $A$ and $b$ are large enough.
\end{proposition}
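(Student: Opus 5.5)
The plan is to reverse-engineer $\bar R_k$ from $\mathcal I_{k,u}+\mathcal I_{k,c}$ and $\bar H_k$ from $\mathcal I_{k,b}$, using the amplitude identities \eqref{a-convex-integration-identity}.

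\emph{Step 1: reduction to a heat flow of level-$k$ potentials.} Sum over $j\in\Lambda_u$ inside $\mathcal I_{k,u}$. The weight $N_{j,k+1}^2e^{-2N_{j,k+1}^2s}$ depends on $j$, so first write it as a common profile plus an error. Concretely, with $N'=N_{j,k+1}$ and $F$ having frequencies $\lesssim N_{J,k}\ll N'$,
\[
\int_0^t e^{(t-s)\Delta}N'^2e^{-2N'^2s}F\,ds=\tfrac12 e^{t\Delta}F-\tfrac12 e^{-2N'^2t}F+E,
\]
where $E$ comes from Taylor expanding $e^{(t-s)\Delta}=e^{t\Delta}e^{-s\Delta}$ in $s$. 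Since $s\lesssim N'^{-2}$ effectively, $\|\nabla^n E\|\lesssim N_{J,k}^{2}N'^{-2}\|\nabla^nF\|$.

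This makes every $j$ in the sum carry the same operator $\tfrac12e^{t\Delta}$, up to errors. The same bound must be checked for $F=a_{j,k+1}^2\eta^{j,\perp}\otimes\eta^{j,\perp}$ individually. These have frequencies up to $N_{J,k}$ only through \eqref{eq:a_bounds}, so $\mathbb Q F$ is fine in $C^\kappa$, and we control $L^\infty$ via $C^\kappa$ at a cost $N_{J,k}^{\varepsilon}$.

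After this, the first two identities of \eqref{a-convex-integration-identity} give
\[
\mathcal I_{k,u}+\mathcal I_{k,c}\approx -\tfrac12 e^{t\Delta}\mathbb Q\big(2\mathcal D\textstyle\sum_j N_{j,k}\psi_{j,k}+p\Id\big)=-2e^{t\Delta}\nabla\odot\mathbb P\sum_jN_{j,k}\psi_{j,k}.
\]
Here we used $\mathbb Q(p\Id)=\mathcal R\mathbb P\nabla p=0$ and \eqref{eq:Q_identities}. Likewise, the third identity with $\mathcal R_1\div\nabla=P_{\neq0}$ gives $\mathcal I_{k,b}\approx -e^{t\Delta}\nabla\sum_jN_{j,k}\psi_{j,k,b}$. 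The normalization constants (factors of $2$) should be matched against \eqref{a-convex-integration-identity} at this point.

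\emph{Step 2: comparison with $\bar R_k,\bar H_k$.} Each $\psi_{j,k}$ is a slowly modulated $\sin(N_{j,k}\eta^j\cdot x)$ with amplitude varying on scale $\gtrsim N_{J,k-1}^{-1},\ell_k$. So the commutator Lemma \ref{l:commutator}, together with Lemma \ref{l:heat_stationary_phase} for the $\mathcal R\mathbb P$-type pieces, gives
\[
e^{t\Delta}\nabla\psi_{j,k}=e^{-N_{j,k}^2t}\nabla\psi_{j,k}+O\big(N_{j,k}^{-1}(N_{J,k-1}/N_{j,k})^{\theta}\big).
\]
Similarly, $2\nabla\odot\mathbb P\psi_{j,k}=\newD\psi_{j,k}+O(\text{amplitude derivatives})$, because $\div\psi_{j,k}$ only hits $a_{j,k}\varphi_{j,k}$; recall $\eta^{j,\perp}\cdot\nabla\varphi_{j,k}=0$, and $\mathbb P\psi-\psi$ is a gradient of lower order. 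Multiplying by $N_{j,k}$, all these errors are $\lesssim N_{J,k-1}^{\theta}N_{j,k}^{-\theta}$, which is $\ll\varepsilon_0N_{1,k}^{-\alpha}$ for $A,b$ large. The derivative versions carry $t^{-n/2}$ from the Gaussian factors $|\xi|^ne^{-|\xi|^2t/4}\lesssim t^{-n/2}$. We then trade a power $t^{\alpha}$ against $N_{1,k}^{-2\alpha}$.

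\emph{Step 3: the time split at $t_k$ (main obstacle).} For $t\ge t_k$ the leftover term $e^{-2N_{j,k+1}^2t}F$ is superexponentially small, since $N_{1,k+1}^2t_k\gg1$ by \eqref{time-sequence}. For $t\le t_k$ it is not small in $L^\infty$; it is only $O(\|\mathbb QF\|_\infty)\lesssim N_{J,k}^{\varepsilon}$, which is the indicator term. For the derivative bounds in this regime, I would avoid the split altogether. Instead, estimate the whole Duhamel integral with Lemma \ref{l:heat_stationary_phase}, so that each $\nabla$ costs $\min(N_{J,k},t^{-1/2})$, and absorb $N_{J,k}^{\varepsilon}$ using $t\le t_k=N_{J,k}^{-4}$ and the extra $t^{\alpha}$.

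The delicate point is keeping the Step 1 Taylor error and the $C^\kappa\to L^\infty$ loss uniform in $n\le\bar n$. I expect the bookkeeping of the $t^{-n/2+\alpha}$ weight against the mollification scale $\ell_k$ to be where the constraints on $b$ and $\gamma$ are really used.
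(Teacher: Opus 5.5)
Your proposal is correct in outline and uses the same ingredients as the paper: the identities \eqref{a-convex-integration-identity} with $\mathbb Q(p\Id)=0$, $\mathbb Q\mathcal D=2\nabla\odot\mathbb P$ and $\mathcal R_1\div\nabla f=\nabla f$; the fact that $\div\psi_{j,k}$ only sees $\nabla a_{j,k}$; and Lemma~\ref{l:commutator}. Where you differ is in how you handle the Duhamel integral.

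The paper splits at $t_k$. On $[0,t_k]$ it bounds $\bar R_k$ and $\mathcal I_{k,u}+\mathcal I_{k,c}$ crudely by $N_{J,k}^{n+\varepsilon}$ and absorbs this using $t\le N_{J,k}^{-4}$. For $t\ge t_k$ it discards $\int_{t_k}^t$ and freezes $e^{(t-s)\Delta}$ to $e^{(t-t_k)\Delta}$ on $[0,t_k]$. It then pays separately for $e^{-N_{j,k}^2(t-t_k)}$ versus $e^{-N_{j,k}^2t}$ and for the $e^{-2N_{j,k+1}^2t_k}$ term.

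You instead freeze to $e^{t\Delta}$, i.e.\ at $s=0$, which is where the weight $N_{j,k+1}^2e^{-2N_{j,k+1}^2s}$ concentrates. Write this as $e^{(t-s)\Delta}-e^{t\Delta}=e^{(t-s)\Delta}(\Id-e^{s\Delta})$ rather than invoking the unbounded $e^{-s\Delta}$. Together with $\int_0^\infty sN'^2e^{-2N'^2s}\,ds\lesssim N'^{-2}$, this gives an error $\lesssim N_{J,k}^{n+2+\kappa}N_{1,k+1}^{-2}$. That bound is uniform in $t$ and small for all $n\le\bar n$ once $b$ is large in terms of $\bar n$.

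Your commutator comparison is also uniform in $t$. So the only $t$-sensitive remainder is $\frac12\sum_j e^{-2N_{j,k+1}^2t}e^{t\Delta}\mathbb Q(a_{j,k+1}^2\eta^{j,\perp}\otimes\eta^{j,\perp})$; your display omits the $e^{t\Delta}$ here, but the difference can go into $E$. For $n\ge1$ this term is $\lesssim t^{-\frac n2+\alpha}N_{J,k}^{n+\kappa}N_{1,k+1}^{-n+2\alpha}$ with no split at all. For $n=0$ it produces the indicator term, in fact in the sharper form $N_{J,k}^{\varepsilon}e^{-2N_{1,k+1}^2t}$.

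What your route buys is that $t_k$ enters only through this one term. You also avoid the paper's truncation cost $t_kN_{J,k}^{n+2+\varepsilon}=N_{J,k}^{n-2+\varepsilon}$ (with $t_k=N_{J,k}^{-4}$). As written, that cost is not small at times $t\sim1$ once $n\ge2$, unless one first sums over $j$ and uses the identity to see that the correction is oscillatory and hence heat-damped. The paper's route, in exchange, reuses the $t_k$ bookkeeping of \cite{CDP}.

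One estimate in your Step 2 is wrong as stated. The amplitude of $\psi_{j,k}$ is $a_{j,k}\varphi_{j,k}$. By \eqref{eq:varphi_bound} the cutoff obeys $\|\nabla^i\varphi_{j,k}\|_\infty\lesssim M_{j,k}^i$, and $M_{j,k}\gg N_{J,k-1}$ by \eqref{N_and_M_ordering}. Moreover $\ell_k\le N_{J,k}^{-1}$ lies below the wavelength, so it is not an amplitude scale at all.

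The correct picture, with $f=a_{j,k}\varphi_{j,k}$, is
$\phi_k*(f e^{iN_{j,k}\eta^j\cdot x})(x)=e^{iN_{j,k}\eta^j\cdot x}\int\phi_k(y)e^{-iN_{j,k}\eta^j\cdot y}f(x-y)\,dy$. The amplitude here has the same derivative bounds $M_{j,k}^i$ as $f$. So in Lemma~\ref{l:commutator} one has $A_i\lesssim (M_{j,k}/N_{j,k})^i$. The error is therefore $N_{j,k}^{n}(M_{j,k}/N_{j,k})e^{-N_{j,k}^2t/4}+N_{j,k}^{n-m}M_{j,k}^m$, not something governed by $(N_{J,k-1}/N_{j,k})^\theta$.

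The argument still closes. With $\gamma=\frac12$ the first piece is $\lesssim N_{j,k}^{n-\frac12}e^{-N_{j,k}^2t/4}\lesssim t^{-\frac n2+\alpha}N_{j,k}^{2\alpha-\frac12}$. Then $N_{j,k}^{2\alpha-\frac12}\le\varepsilon_0N_{1,k}^{-\alpha}$ because $3\alpha<\frac12$. Note that converting $N_{j,k}^ne^{-N_{j,k}^2t/4}$ into $t^{-\frac n2+\alpha}$ costs a factor $N_{j,k}^{2\alpha}$; it is not a gain of $N_{1,k}^{-2\alpha}$. The tail term needs $m$ large. This is exactly where $\gamma$ enters, as you suspected.

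Finally, the crude small-time bound on $\mathcal I$ does not come from Lemma~\ref{l:heat_stationary_phase}, since $a_{j,k+1}^2\eta^{j,\perp}\otimes\eta^{j,\perp}$ is not oscillatory. The bound $\|\nabla^n\mathbb QF\|_\infty\lesssim\|F\|_{C^{n,\kappa}}\lesssim N_{J,k}^{n+\kappa}$, together with $\int_0^tN'^2e^{-2N'^2s}\,ds\le\frac12$, suffices.
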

\begin{proof}
The heat-semigroup decomposition used below is the same one that appears in
\cites{Dai2026,CDP}. We spell out Step I once and then record only the
passive-scalar-specific substitutions in Steps II and III.
{\textbf{Step I.}}
This step is to show 
\begin{equation}\label{step1}
\begin{split}
\|\nabla^n(\bar R_{k,u}(t) - \mathcal{I}_{k,u}(t))\|_{L^\infty} &\leq \varepsilon_0 N^{-\alpha}_{1,k}(t^{-\frac{n}{2} +\alpha}+1), \qquad  n=1,2,\dots,\bar n,\\
\|\bar R_{k,u}(t) - \mathcal{I}_{k,u}(t)\|_{L^\infty} &\lesssim_\varepsilon \varepsilon_0N_{1,k}^{-\alpha}+\mathbbm{1}_{t\leq t_k}N_{J, k}^{\varepsilon}.
\end{split}
\end{equation}
This is the same oscillatory stress estimate proved in the corresponding
velocity step of \cites{Dai2026,CDP}; here we only retain the local formulas
that will be reused in Steps II and III.

As in \cites{Dai2026,CDP}, for any $\varepsilon>0$,
\[
\|\nabla^n\mathcal{I}_{k,u}(t)\|_\infty \lesssim_\varepsilon N_{J,k}^{n+\varepsilon},
\qquad
\|\nabla^n\bar R_{k,u}(t)\|_\infty\lesssim \sum_j N_{j,k}^{n}e^{-N_{j,k}^2t}\lesssim N_{J,k}^{n},
\]
and therefore, on $[0,t_k]$,
\begin{equation} \label{eq:bound_before_t_k_2D}
\|\nabla^n(\bar R_{k,u}(t)-\mathcal I_{k,u}(t))\|_\infty
\lesssim_\varepsilon N_{J,k}^{n+\varepsilon}.
\end{equation}

For $t\ge t_k$, we split
\[
\mathcal I_{k,u}(t)=\mathcal I_{k,u}^{<t_k}(t)+\mathcal I_{k,u}^{>t_k}(t)
\]
and define
\[
\tilde{\mathcal I}_{k,u}^{<t_k}
:=-e^{(t-t_k)\Delta}\int_0^{t_k}\sum_{j\in\Lambda_u}N_{j,k+1}^2e^{-2N_{j,k+1}^2s}
\mathbb Q\!\left(a_{j,k+1,u}^2\eta^{j,\perp}\otimes\eta^{j,\perp}\right)\,ds.
\]
The same semigroup truncation estimate as in \cites{Dai2026,CDP} yields
\[
\|\nabla^n(\mathcal I_{k,u}^{<t_k}-\tilde{\mathcal I}_{k,u}^{<t_k})\|_\infty
\lesssim t_kN_{J,k}^{n+2+\varepsilon}.
\]
Moreover, by the first identity in \eqref{a-convex-integration-identity} and
\eqref{eq:Q_identities},
\begin{equation}\label{eq:quadratic_identity}
\frac12\sum_{j\in\Lambda_u}\mathbb Q\!\left(a_{j,k+1,u}^2\eta^{j,\perp}\otimes\eta^{j,\perp}\right)
=\sum_{j\in\Lambda_u}N_{j,k}\Big(\newD\psi_{j,k,u}
+(\Id-2\Delta^{-1}\nabla\otimes\nabla)\div\psi_{j,k,u}\Big),
\end{equation}
so that
\[
\tilde{\mathcal I}_{k,u}^{<t_k}
=\bar R_{k,u}+\tilde{\mathcal I}_{k,u}^1+\tilde{\mathcal I}_{k,u}^2+\tilde{\mathcal I}_{k,u}^3
-(\Id-2\frac{\nabla\otimes\nabla}{\Delta})p,
\]
where
\[
\begin{split}
\tilde{\mathcal I}_{k,u}^1
&:=-\sum_{j\in\Lambda_u}N_{j,k}\big(e^{-N_{j,k}^2(t-t_k)}-e^{-N_{j,k}^2t}\big)\newD\psi_{j,k,u},\\
\tilde{\mathcal I}_{k,u}^2
&:=\sum_{j\in\Lambda_u}N_{j,k}\big(e^{-N_{j,k}^2(t-t_k)}-e^{(t-t_k)\Delta}\big)\newD\psi_{j,k,u},\\
\tilde{\mathcal I}_{k,u}^3
&:=\frac12e^{(t-t_k)\Delta}\sum_{j\in\Lambda_u}e^{-2N_{j,k+1}^2t_k}
\mathbb Q\!\left(a_{j,k+1,u}^2\eta^{j,\perp}\otimes\eta^{j,\perp}\right).
\end{split}
\]
Exactly as in the companion proofs, Lemma~\ref{l:commutator},
\eqref{heat-decay-estimate}, and the late-time integral estimate imply, for
$t\ge t_k$,
\[
\begin{split}
\|\nabla^n(\bar R_{k,u}-\mathcal I_{k,u})\|_\infty
&\lesssim_{\varepsilon,m}\sum_{j\in\Lambda_u}\Big(
N_{j,k}^{n}e^{-N_{j,k}^2t/4}(N_{j,k}^2t_k+M_{j,k}N_{j,k}^{-1})
+N_{j,k}^{n-m}M_{j,k}^m\Big)\\
&\quad +\sum_{j\in\Lambda_u}\Big(
M_{j,k}N_{j,k}^{n-1+\varepsilon}e^{-N_{j,k}^2(t-t_k)/4}
+N_{j,k}^{n-m+\varepsilon}M_{j,k}^m\Big)\\
&\quad +\sum_{j\in\Lambda_u}N_{J,k}^{n+\varepsilon}e^{-2N_{j,k+1}^2t_k}
+t_kN_{J,k}^{n+2+\varepsilon}\\
&\lesssim N_{j,k}^{n-\frac12}e^{-N_{j,k}^2t/4}
\end{split}
\]
where the last step is achieved by
choosing $\gamma=\frac12$ in \eqref{M_definition}
and taking
$m$, $A$, and $b$ sufficiently large. Combining this with
\eqref{eq:bound_before_t_k_2D} on $[0,t_k]$, we obtain
\[
\|\nabla^n(\bar R_{k,u}(t)-\mathcal I_{k,u}(t))\|_\infty
\le_n \varepsilon_0N_{1,k}^{-\alpha}(t^{-\frac n2+\alpha}+1),
\qquad n=1,2,\dots,\bar n.
\]
For $n=0$, the corresponding late-time estimate becomes
\[
\|\bar R_{k,u}(t)-\mathcal I_{k,u}(t)\|_\infty
\lesssim_\varepsilon \varepsilon_0N_{1,k}^{-\alpha},
\qquad t\ge t_k,
\]
while \eqref{eq:bound_before_t_k_2D} supplies the
$\mathbbm{1}_{t\le t_k}N_{J,k}^\varepsilon$ term on $[0,t_k]$. This establishes
\eqref{step1}.

{\textbf{Step II.}}
We next verify
\begin{equation}\label{step2}
\begin{split}
\|\nabla^n(\bar R_{k,c}(t) - \mathcal{I}_{k,c}(t))\|_{L^\infty} &\leq \varepsilon_0 N^{-\alpha}_{1,k}(t^{-\frac{n}{2} +\alpha}+1), \qquad  n=1,2,\dots,\bar n,\\
\|\bar R_{k,c}(t) - \mathcal{I}_{k,c}(t)\|_{L^\infty} &\lesssim_\varepsilon \varepsilon_0N_{1,k}^{-\alpha}+\mathbbm{1}_{t\leq t_k}N_{J, k}^{\varepsilon}.
\end{split}
\end{equation}
The proof follows the same pattern as Step I, together with the analogous
coupled-stress estimates in \cites{Dai2026,CDP}. The required substitutions are
\[
(\Lambda_u,a_{j,k+1,u},\psi_{j,k,u},\bar R_{k,u},\mathcal I_{k,u},p)
\]
replaced by
\[
(\Lambda_b,a_{j,k+1,b},\psi_{j,k,c},\bar R_{k,c},\mathcal I_{k,c},p_c),
\]
and the second identity in \eqref{a-convex-integration-identity} is used in
place of the first.

The same preliminary estimate as in Step I gives, for $t\in[0,t_k]$,
\begin{equation}\label{eq:bound_before_t_k_2D_c}
\|\nabla^n(\bar R_{k,c}(t)-\mathcal I_{k,c}(t))\|_\infty
\lesssim_\varepsilon N_{J_d,k}^{n+\varepsilon}.
\end{equation}
For $t\ge t_k$, we decompose
\[
\mathcal I_{k,c}(t)=\mathcal I_{k,c}^{<t_k}(t)+\mathcal I_{k,c}^{>t_k}(t)
\]
and introduce
\[
\tilde{\mathcal I}_{k,c}^{<t_k}
:=-e^{(t-t_k)\Delta}\int_0^{t_k}\sum_{j\in\Lambda_b}N_{j,k+1}^2e^{-2N_{j,k+1}^2s}
\mathbb Q\!\left(a_{j,k+1,b}^2\eta^{j,\perp}\otimes\eta^{j,\perp}\right)\,ds.
\]
Exactly as in Step I, we have
\[
\|\nabla^n(\mathcal I_{k,c}^{<t_k}-\tilde{\mathcal I}_{k,c}^{<t_k})\|_\infty
\lesssim t_kN_{J_d,k}^{n+2+\varepsilon}.
\]
Moreover, combining the second identity in
\eqref{a-convex-integration-identity} with \eqref{eq:Q_identities} gives
\begin{equation}\label{eq:quadratic_identity_c}
\frac12\sum_{j\in\Lambda_b}\mathbb Q\!\left(a_{j,k+1,b}^2\eta^{j,\perp}\otimes\eta^{j,\perp}\right)
=\sum_{j\in\Lambda_b}N_{j,k}\Big(\newD\psi_{j,k,c}+(\Id-2\Delta^{-1}\nabla\otimes\nabla)\div\psi_{j,k,c}\Big),
\end{equation}
so that
\[
\tilde{\mathcal I}_{k,c}^{<t_k}
=\bar R_{k,c}+\tilde{\mathcal I}_{k,c}^1+\tilde{\mathcal I}_{k,c}^2+\tilde{\mathcal I}_{k,c}^3-(\Id-2\frac{\nabla\otimes\nabla}{\Delta})p_c,
\]
where $\tilde{\mathcal I}_{k,c}^1$, $\tilde{\mathcal I}_{k,c}^2$,
$\tilde{\mathcal I}_{k,c}^3$, and $p_c$ are defined exactly as in Step I after
making the substitutions above. Applying the same commutator estimate,
heat-decay bound, and late-time estimate, we obtain for $t\ge t_k$,
\[
\begin{split}
\|\nabla^n(\bar R_{k,c}-\mathcal I_{k,c})\|_\infty
&\lesssim_{\varepsilon,m}\sum_{j\in\Lambda_b}\Big(
N_{j,k}^{n}e^{-N_{j,k}^2t/4}(N_{j,k}^2t_k+M_{j,k}N_{j,k}^{-1})
+N_{j,k}^{n-m}M_{j,k}^m\Big)\\
&\quad +\sum_{j\in\Lambda_b}\Big(
M_{j,k}N_{j,k}^{n-1+\varepsilon}e^{-N_{j,k}^2(t-t_k)/4}
+N_{j,k}^{n-m+\varepsilon}M_{j,k}^m\Big)\\
&\quad +\sum_{j\in\Lambda_b}N_{J_d,k}^{n+\varepsilon}e^{-2N_{j,k+1}^2t_k}
+t_kN_{J_d,k}^{n+2+\varepsilon}.
\end{split}
\]
Choosing $\gamma=\frac12$ in \eqref{M_definition} yields
\[
N_{j,k}^{n}e^{-N_{j,k}^2t/4}(N_{j,k}^2t_k+M_{j,k}N_{j,k}^{-1})
\lesssim N_{j,k}^{n-\frac12}e^{-N_{j,k}^2t/4},
\]
and the remaining non-decaying terms are again made arbitrarily small by
taking $m$, $A$, and $b$ sufficiently large. Combining this with
\eqref{eq:bound_before_t_k_2D_c} on $[0,t_k]$ proves the bounds in
\eqref{step2}.

{\textbf{Step III.}}
We now prove
\begin{equation}\label{step3}
\begin{split}
\|\nabla^n(\bar H_k(t) - \mathcal{I}_{k,b}(t))\|_{L^\infty} &\leq \varepsilon_0 N^{-\alpha}_{1,k}(t^{-\frac{n}{2} +\alpha}+1), \qquad  n=1,2,\dots,\bar n,\\
\|\bar H_k(t) - \mathcal{I}_{k,b}(t)\|_{L^\infty} &\lesssim_\varepsilon \varepsilon_0N_{1,k}^{-\alpha}+\mathbbm{1}_{t\leq t_k}N_{J, k}^{\varepsilon}.
\end{split}
\end{equation}
This step has the same structure as Step I and the scalar-error estimate in
\cite{Dai2026}; we therefore record only the relevant substitutions. We replace
\[
(\Lambda_u,a_{j,k+1,u},\psi_{j,k,u},\bar R_{k,u},\mathbb Q,\newD)
\]
by
\[
(\Lambda_b,a_{j,k+1,b},\psi_{j,k,b},\bar H_k,\mathcal R_1\div,\nabla),
\]
and use the third identity in \eqref{a-convex-integration-identity}.

The corresponding rough estimate on $[0,t_k]$ is therefore
\begin{equation}\label{eq:bound_before_t_k_2D_B}
\|\nabla^n(\bar H_k(t)-\mathcal I_{k,b}(t))\|_\infty
\lesssim_\varepsilon N_{J_d,k}^{n+\varepsilon}.
\end{equation}
For $t\ge t_k$, we split
\[
\mathcal I_{k,b}(t)=\mathcal I_{k,b}^{<t_k}(t)+\mathcal I_{k,b}^{>t_k}(t)
\]
and set
\[
\tilde{\mathcal I}_{k,b}^{<t_k}
:=-e^{(t-t_k)\Delta}\int_0^{t_k}\sum_{j\in\Lambda_b}N_{j,k+1}^2e^{-2N_{j,k+1}^2s}
\mathcal R_1\div\!\left(a_{j,k+1,b}\eta^{j,\perp}\right)\,ds,
\]
Then
\[
\|\nabla^n(\mathcal I_{k,b}^{<t_k}-\tilde{\mathcal I}_{k,b}^{<t_k})\|_\infty
\lesssim t_kN_{J_d,k}^{n+2+\varepsilon}.
\]
Moreover, the third identity in \eqref{a-convex-integration-identity} gives
\begin{equation}\label{eq:quadratic_identity_B}
\frac12\sum_{j\in\Lambda_b}\mathcal R_1\div\!\left(a_{j,k+1,b}\eta^{j,\perp}\right)
=\sum_{j\in\Lambda_b}N_{j,k}\nabla\psi_{j,k,b},
\end{equation}
and hence
\[
\tilde{\mathcal I}_{k,b}^{<t_k}
=\bar H_k+\tilde{\mathcal I}_{k,b}^1+\tilde{\mathcal I}_{k,b}^2+\tilde{\mathcal I}_{k,b}^3,
\]
where $\tilde{\mathcal I}_{k,b}^1$, $\tilde{\mathcal I}_{k,b}^2$, and
$\tilde{\mathcal I}_{k,b}^3$ are the direct analogues of the Step I error
terms. Invoking the same commutator estimate and late-time bound then yields,
for
$t\ge t_k$,
\[
\begin{split}
\|\nabla^n(\bar H_k-\mathcal I_{k,b})\|_\infty
&\lesssim_{\varepsilon,m}\sum_{j\in\Lambda_b}\Big(
N_{j,k}^{n}e^{-N_{j,k}^2t/4}(N_{j,k}^2t_k+M_{j,k}N_{j,k}^{-1})
+N_{j,k}^{n-m}M_{j,k}^m\Big)\\
&\quad +\sum_{j\in\Lambda_b}\Big(
M_{j,k}N_{j,k}^{n-1+\varepsilon}e^{-N_{j,k}^2(t-t_k)/4}
+N_{j,k}^{n-m+\varepsilon}M_{j,k}^m\Big)\\
&\quad +\sum_{j\in\Lambda_b}N_{J_d,k}^{n+\varepsilon}e^{-2N_{j,k+1}^2t_k}
+t_kN_{J_d,k}^{n+2+\varepsilon}.
\end{split}
\]
With the same choice of parameters, every contribution is controlled exactly
as in Step I, and together with \eqref{eq:bound_before_t_k_2D_B} this proves
\eqref{step3}.

Finally, since $\bar R_k=\bar R_{k,u}+\bar R_{k,c}$, Step I and Step II imply
\[
\|\nabla^n(\bar R_k-\mathcal I_{k,u}-\mathcal I_{k,c})\|_\infty
\le \|\nabla^n(\bar R_{k,u}-\mathcal I_{k,u})\|_\infty+\|\nabla^n(\bar R_{k,c}-\mathcal I_{k,c})\|_\infty,
\]
with the analogous $n=0$ estimate. Combining this with Step III completes the
proof of Proposition~\ref{barv-I_estimate_proposition}.

\end{proof}

Applying Proposition \ref{barv-I_estimate_proposition}, we next show that
$R_k-\bar R_k$ is small. As a consequence, $v_k-\bar v_k$ is also small,
because $v_k=\div R_k$ and $\bar v_k=\div\bar R_k$.

\begin{proposition}\label{difference_estimate_proposition}
For all sufficiently small $\alpha>0$, all $\varepsilon_0>0$, and every
$\bar n \in \mathbb{N}$, the parameters $A$ and $b$ may be chosen large enough
so that
\[
\begin{split}
\|\nabla^n(R_k(t)- \bar R_k(t) )\|_{L^\infty} &\leq_n \varepsilon_0 N_{1,k}^{-\alpha} (t^{-\frac{n}{2}+\alpha}+1), \qquad n=1,2,\dots, \bar n,\\
\|R_k(t)- \bar R_k(t)\|_{L^\infty} &\lesssim_\varepsilon \varepsilon_0N_{1,k}^{-\alpha}+\mathbbm{1}_{t\leq t_k}N_{J, k}^{\varepsilon}.
\end{split}
\]

\end{proposition}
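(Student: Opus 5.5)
By the triangle inequality and Proposition~\ref{barv-I_estimate_proposition}, it suffices to show that $R_k-\mathcal I_{k,u}-\mathcal I_{k,c}$ obeys the same bounds. Note that the statement concerns only $R_k$, not $H_k$; the scalar primitive is handled identically, using $\mathcal R_1\div$ in place of $\mathcal R\mathbb P\div$. Since $\mathcal R\mathbb P\div=\mathbb Q$ by \eqref{eq:Q_identities}, we expand
\[
R_k(t)=-\int_0^t e^{(t-s)\Delta}\mathbb Q(\bar v_{k+1}\otimes\bar v_{k+1})(s)\,ds
\]
using $\bar v_{k+1}=\sum_j\bar v_{j,k+1}$. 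The diagonal products $\bar v_{j,k+1}\otimes\bar v_{j,k+1}$ with $j\in\Lambda_u$ (respectively $j\in\Lambda_b$) will reproduce $\mathcal I_{k,u}$ (respectively $\mathcal I_{k,c}$). The remaining terms are errors of three types: (i) cross terms $j\neq j'$; (ii) the mean-free oscillatory part of the diagonal terms, at frequency $\sim 2N_{j,k+1}$; (iii) the corrections coming from $-\Delta\psi_{j,k+1}$ differing from $a_{j,k+1}\varphi_{j,k+1}\eta^{j,\perp}\sin(N_{j,k+1}\eta^j\cdot x)$, because of the mollifier $\phi_{k+1}$ and derivatives falling on the amplitude or cutoff, and from the Leray projection. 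We also use that $\varphi_{j,k+1}^2$ has normalized average one against $\sin^2$, so that after removing the mean only the low-frequency part $a_{j,k+1}^2\eta^{j,\perp}\otimes\eta^{j,\perp}$ survives, with the prefactor $N_{j,k+1}^2e^{-2N_{j,k+1}^2s}$.

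\textbf{Type (iii).} Write $-\Delta\psi_{j,k+1}=\phi_{k+1}*(a\varphi\eta^\perp\sin)+O(N_{j,k+1}^{-1}M_{j,k+1})$ using \eqref{eq:a_bounds} and \eqref{eq:varphi_bound}. The mollification error is tiny because $\ell_{k+1}\ll N_{J,k+1}^{-1}$ only slightly while the amplitudes vary at scale $M_{j,k+1}^{-1}\gg\ell_{k+1}$. The $\mathbb P$ part is also of lower order, since $\eta^{j,\perp}\cdot\nabla$ annihilates the phase and $\varphi$. Each such error gains a factor $M_{j,k+1}N_{j,k+1}^{-1}\le A^{-c}$, and the time integral $\int_0^t N^2e^{-2N^2s}\,ds\lesssim1$ absorbs the prefactor. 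The heat semigroup bounds of Lemma~\ref{l:heat_stationary_phase} then give $\nabla^n$ bounds of the form $N_{j,k+1}^{n}e^{-N^2_{j,k+1}t/4}A^{-c}$ plus rapidly decaying terms. Since $N_{j,k+1}^ne^{-cN^2t}\lesssim t^{-n/2+\alpha}N_{1,k+1}^{-2\alpha}$, this becomes smaller than $\varepsilon_0N_{1,k}^{-\alpha}t^{-n/2+\alpha}$ once $A$ is large.

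\textbf{Type (ii).} Here we apply the oscillation estimate, Lemma~\ref{l:oscillation_estimate}, with $\lambda=M_{j,k+1}$, $b=\varphi^2$ rescaled, and $\Xi=N_{j,k+1}\eta^j$. Its $e^{-\lambda^2t/4}$ and $\lambda^{n-i-1}$ structure gives a gain of $\lambda^{-1}$ times derivatives of the amplitude, which are $\lesssim N_{J,k}^i$. With $\gamma=\frac12$ this is $\ll N_{1,k}^{-\alpha}$. The factor $\lambda^{n}e^{-\lambda^2 t/4}$ is again converted into $t^{-n/2+\alpha}\lambda^{-2\alpha}$.

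\textbf{Type (i), the main obstacle.} For $j\neq j'$ at level $k+1$, the time integral contributes $\int_0^tN_jN_{j'}e^{-(N_j^2+N_{j'}^2)s}\,ds\lesssim N_{j',k+1}/N_{j,k+1}$ for $j>j'$, which is $\ll A^{-c}$ by \eqref{scale-separation}. The product is at frequency $\sim N_{j,k+1}$, so Lemma~\ref{l:heat_stationary_phase} supplies an additional $N_{j,k+1}^{-1}$ from $\mathcal R$. The coupled pieces $\psi_{j,k+1,c}$ and $\psi_{j,k+1,u}$ may share directions, since $\Lambda_u$ and $\Lambda_b$ overlap through $\pm\eta$. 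However, they carry distinct frequencies $N_{j,k+1}$ because of the indexing by $\mathcal J$, so the same separation applies. For $t\le t_k$ we only need the crude bound $N_{J,k}^{\varepsilon}$ for $n=0$. This follows as in \eqref{eq:bound_before_t_k_2D}: on this range $R_k$ is bounded via the time integral and logarithmic losses, and $\bar R_k$ is bounded directly. Summing types (i)--(iii) and combining with Proposition~\ref{barv-I_estimate_proposition} yields the claim.
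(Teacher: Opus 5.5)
Your decomposition is the paper's. The diagonal self-interactions give $\mathcal I_k=\mathcal I_{k,u}+\mathcal I_{k,c}$, your types (ii), (i), (iii) are $\mathcal J_k^1$, $\mathcal J_k^2$, $\mathcal E_k$, and you close with the triangle inequality and Proposition~\ref{barv-I_estimate_proposition}. The gap is in type (ii). Lemma~\ref{l:oscillation_estimate} controls $e^{t\Delta}\mathcal R\mathbb P$ applied to a \emph{vector} field, which is an operator of order $-1$. Here the operator is $\mathbb Q=\mathcal R\mathbb P\div$, which has order zero, acting on the tensor $a_{j,k+1}^2 g_j\,\eta^{j,\perp}\otimes\eta^{j,\perp}$ with $g_j:=\varphi_{j,k+1}^2\sin^2(N_{j,k+1}\eta^j\cdot x)-1$. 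So you cannot read off a gain $\lambda^{-1}\|a\|_{C^{i,\kappa}}$ with $a$ the tensor amplitude. For a generic oscillatory tensor the divergence lands on $g_j$ and costs $N_{j,k+1}$ (or $M_{j,k+1}$), which exactly cancels the gain from $\mathcal R$. A Calder\'on--Zygmund operator applied to an $O(1)$ mean-free oscillation then gives no smallness at times $t\lesssim M_{j,k+1}^{-2}$, for instance $t\sim N_{j,k+1}^{-2}$. These times lie inside $[0,t_k]$, where for $n\geq1$ the statement still demands $\varepsilon_0N_{1,k}^{-\alpha}t^{-n/2+\alpha}$, i.e.\ a genuine gain of at least $N_{j,k+1}^{-2\alpha}$ relative to $N_{j,k+1}^{n}$. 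The crude bound on $[0,t_k]$ therefore does not rescue this step.

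The missing ingredient is the parallel-shear identity that the paper invokes for $\mathcal J_k^1$. Since $\eta^{j,\perp}\cdot\nabla\varphi_{j,k+1}=0$ and $\eta^{j,\perp}\cdot\eta^j=0$, the factor $g_j$ depends only on $\eta^j\cdot x$, and hence
\[
\div\big(a_{j,k+1}^2g_j\,\eta^{j,\perp}\otimes\eta^{j,\perp}\big)=g_j\,\big(\eta^{j,\perp}\cdot\nabla a_{j,k+1}^2\big)\,\eta^{j,\perp},
\qquad
\mathbb Q\big(a_{j,k+1}^2g_j\,\eta^{j,\perp}\otimes\eta^{j,\perp}\big)=\mathcal R\mathbb P\big(g_j\,(\eta^{j,\perp}\cdot\nabla a_{j,k+1}^2)\,\eta^{j,\perp}\big).
\]
Only after this reduction does Lemma~\ref{l:oscillation_estimate} apply. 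It applies with a vector amplitude of size $N_{J,k}$, which yields the gain $N_{J,k}M_{j,k+1}^{-1}$; this is the origin of the $N_{J,k}^{1+\varepsilon}$ prefactor in the paper's bound for $\mathcal J_k^1$. You must also note that the coupling profiles $\psi_{j,k+1,c}$ have the same $\eta^{j,\perp}\sin(N_{j,k+1}\eta^j\cdot x)$ structure, which they do.

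A related slip occurs in type (i). $\mathbb Q$ supplies no extra $N_{j,k+1}^{-1}$ there, for the same reason: the divergence costs what $\mathcal R$ gains. The paper uses only the Calder\'on--Zygmund bound (through $C^\varepsilon$) together with the time-integral factor $N_{j',k+1}/N_{j,k+1}$. That factor suffices, so your conclusion for type (i) survives once the bound is corrected.

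Finally, in type (iii) the mollification error is small because $\ell_{k+1}N_{j,k+1}$ (mollifier scale against phase frequency) is below a power of $A^{-1}$, and the margin is large, not slight. The amplitude scale $M_{j,k+1}^{-1}$ is not what matters there.
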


\begin{proof}
We use the same discrepancy decomposition as in the corresponding error
estimates from \cites{Dai2026,CDP}. More precisely, expanding
$\Delta\psi_{j,k+1,u}$ and $\Delta\psi_{j,k+1,c}$ separates each
unidirectional contribution into a principal part, an oscillatory correction,
and lower-order derivative or mollification remainders, while interactions
between different directions generate the same three classes of mixed terms as
in those references. Accordingly, we write
\[
R_k=\mathcal I_k+\mathcal J_k^1+\mathcal J_k^2+\mathcal E_k,
\]
where
\[
\begin{split}
\mathcal I_k
&:= -\int_0^{t} e^{(t-s)\Delta }\sum_{j\in\Lambda_u} N_{j,k+1}^2 e^{-2N_{j,k+1}^2s}\,
\mathbb Q\!\left(a_{j,k+1,u}^2 \eta^{j,\perp}\otimes \eta^{j,\perp}\right) \, ds\\
&\quad -\int_0^{t} e^{(t-s)\Delta }\sum_{j\in\Lambda_b} N_{j,k+1}^2 e^{-2N_{j,k+1}^2s}\,
\mathbb Q\!\left(a_{j,k+1,b}^2 \eta^{j,\perp}\otimes \eta^{j,\perp}\right) \, ds,
\end{split}
\]
\[
\begin{split}
\mathcal J_k^1
&:= -\int_0^{t} e^{(t-s)\Delta }\sum_{j\in\Lambda_u} N_{j,k+1}^2 e^{-2N_{j,k+1}^2s}\,
\mathbb Q\!\left(a_{j,k+1,u}^2(\varphi^2_{j,k+1}\sin^2(N_{j,k+1}\eta^j\!\cdot x)-1)\eta^{j,\perp}\otimes\eta^{j,\perp}\right)\,ds\\
&\quad -\int_0^{t} e^{(t-s)\Delta }\sum_{j\in\Lambda_b} N_{j,k+1}^2 e^{-2N_{j,k+1}^2s}\,
\mathbb Q\!\left(a_{j,k+1,b}^2(\varphi^2_{j,k+1}\sin^2(N_{j,k+1}\eta^j\!\cdot x)-1)\eta^{j,\perp}\otimes\eta^{j,\perp}\right)\,ds,
\end{split}
\]
\[
\begin{split}
\mathcal J_k^2
&:= -\int_0^t e^{(t-s)\Delta}\mathbb Q
\sum_{j,j'\in\Lambda_u,\ j\neq j'} N_{j,k+1}N_{j',k+1}e^{-N_{j,k+1}^2s-N_{j',k+1}^2s}\\
&\quad\qquad\qquad\cdot a_{j,k+1,u}a_{j',k+1,u}\varphi_{j,k+1}\varphi_{j',k+1}\sin(N_{j,k+1}\eta^j\!\cdot x)\sin(N_{j',k+1}\eta^{j'}\!\cdot x)\,\eta^{j,\perp}\otimes\eta^{j',\perp}ds\\
&\quad -\int_0^t e^{(t-s)\Delta}\mathbb Q
\sum_{j,j'\in\Lambda_b,\ j\neq j'} \mathrm{similar\ \ terms}\, ds\\
&\quad -\int_0^t e^{(t-s)\Delta}\mathbb Q
\sum_{j\in\Lambda_u, j'\in \Lambda_b} \mathrm{similar\ \ terms}\, ds
\end{split}
\]
and
\[
\mathcal E_k:=-\int_0^{t} e^{(t-s)\Delta }\mathbb Q(\mathfrak E_k^1+\mathfrak E_k^2)\,ds.
\]

Proposition~\ref{barv-I_estimate_proposition} already controls the leading term
$\mathcal I_k$. It remains to estimate $\mathcal J_k^1$, $\mathcal J_k^2$, and
$\mathcal E_k$.


\noindent
{\bf Bounds for $\mathcal{J}_k^1$ and $\mathcal{J}_k^2$.}

For $\mathcal J_k^1$, the same parallel-shear identity from
\cites{Dai2026,CDP} applies to both the $\Lambda_u$ and $\Lambda_b$
contributions,
since $\eta^{j,\perp}\cdot\eta^j=0$ and
\[
\varphi_{j,k+1}^2(x)\sin^2(N_{j,k+1}\eta^j\cdot x)-1
\]
has zero mean and is $2\pi/M_{j,k+1}$-periodic. Combining
Lemma~\ref{l:oscillation_estimate} with the same heat-semigroup estimates used
there gives
\[
\|\nabla^n \mathcal J_k^1\|_\infty
\lesssim \sum_{j}N_{J,k}^{1+\varepsilon}\big( N_{j,k+1}^{n-1} e^{-M_{j,k+1}^2t/4}+M_{j,k+1}^{-m+n}N_{J,k}^{m} \big).
\]
For $n=0$, the same argument gives
\[
\| \mathcal J_k^1 \|_\infty
\lesssim_\varepsilon \sum_{j}N_{J,k}^{1+\varepsilon}\big( M_{j,k+1}^{-1} e^{-M_{j,k+1}^2t/4}+M_{j,k+1}^{-m}N_{J,k}^{m} \big).
\]

For $\mathcal J_k^2$, the same commutator and heat-kernel argument from
\cites{Dai2026,CDP} applies to the $\Lambda_u$--$\Lambda_u$,
$\Lambda_b$--$\Lambda_b$, and mixed $\Lambda_u$--$\Lambda_b$ interactions.
Estimating $L^\infty$ through $C^\varepsilon$ and using that $\mathbb Q$ is a
Calder\'on--Zygmund operator, we obtain
\[
\|\nabla^n\mathcal J_k^2\|_\infty
\lesssim \sum_{j'<j} e^{-\frac14N_{j,k+1}^2t}N^{n-1+\varepsilon}_{j,k+1}N_{j',k+1}
\,+\sum_{j'<j} N_{j,k+1}^{-1}N_{j',k+1} N_{j,k+1}^{n-m}M_{j,k+1}^{m+\varepsilon}.
\]

\noindent
{\bf Bound for $\mathcal{E}_k$.}

Decompose $\mathcal E_k=\mathcal E_k^1+\mathcal E_k^2+\mathcal E_k^3$
according to the lower-order derivative contributions from the
unidirectional pieces, the lower-order interaction terms, and the
mollification remainders. The prototype expressions have the same structure as
in \cites{Dai2026,CDP}; the only modification is the replacement of
$a_{j,k+1,u}$ by $a_{j,k+1,b}$ in the $\Lambda_b$ and mixed components.
Consequently,
\[
\|\nabla^n \mathcal E^1_k\|_\infty\lesssim \sum_{j} \big(N_{j,k+1}^{n-1+\varepsilon}e^{-\frac14 N_{j,k+1}^2t} +N_{j,k+1}^{-2}M_{j,k+1}^{n+2+\varepsilon}\big),
\]
\[
\|\nabla^n \mathcal E_k^2\|_\infty\lesssim \|\nabla^n\mathcal J_k^2\|_\infty,
\]
and, using $\|\nabla^n(f-\phi_{k+1}*f)\|_\infty\lesssim \ell_{k+1}\|\nabla^{n+1}f\|_\infty$ together with \eqref{eq:a_bounds},
\[
\|\nabla^n\mathcal E_k^3\|_\infty\lesssim\sum_jN_{j,k+1}^{n+1+\varepsilon}\ell_{k+1}.
\]

\noindent
{\bf Conclusion.}
By the triangle inequality,
\[
\|\nabla^n(R_k-\bar R_k)\|_\infty \leq \|\nabla^n(R_k-\mathcal{I}_k)\|_\infty + \|\nabla^n(\bar R_k-\mathcal{I}_k)\|_\infty .
\]
Since $\mathcal I_k=\mathcal I_{k,u}+\mathcal I_{k,c}$, Proposition~\ref{barv-I_estimate_proposition} gives, for any $\varepsilon_0 >0$, $\alpha \in(0, \frac{1}{10})$, and $\bar n \in \mathbb{N}$,
\[
\|\nabla^n(\bar R_k - \mathcal{I}_k)\|_\infty \leq \varepsilon_0 N^{-\alpha}_{1,k+1}(t^{-\frac{n}{2} +\alpha}+1), \qquad n=1,2,\dots,\bar n,
\]
once $A$ and $b$ are taken sufficiently large.

Since
\[
R_k-\mathcal{I}_k=\mathcal{J}_k^1+\mathcal{J}_k^2 + \mathcal{E}_k,
\]
it remains to prove the same estimate for the residual terms
$\mathcal{J}_k^1+\mathcal{J}_k^2 + \mathcal{E}_k$. By the estimates above,
\[ 
\begin{split}
&\quad\|\nabla^n(\mathcal{J}_k^1+\mathcal{J}_k^2 + \mathcal{E}_k)\|_\infty\\
&\leq \|\nabla^n\mathcal{J}_k^1\|_\infty+\|\nabla^n\mathcal{J}_k^2\|_\infty+\|\nabla^n\mathcal{E}_k^1\|_\infty+\|\nabla^n\mathcal{E}_k^2\|_{L^\infty}+\|\nabla^n\mathcal{E}_k^3\|_\infty\\
&\lesssim_{\varepsilon,m} \sum_{j} N_{j,k+1}^{n -1+ \varepsilon} N_{J,k} \big(e^{-\frac14 M_{j,k+1}^2t} + M_{j,k+1}^{-m}N_{J,k}^{m} \big)\\
&\quad \, \,  +\sum_{j'<j} N^{n-1+\varepsilon}_{j,k+1}N_{j',k+1}e^{-\frac14N_{j,k+1}^2t}\\
&\quad \, \, +\sum_{j'<j} N_{j,k+1}^{-1}N_{j',k+1} N_{j,k+1}^{n-m}M_{j,k+1}^{m+\varepsilon}\\
&\quad \, \, +\sum_{j}\big( N_{j,k+1}^{n-1+\varepsilon}e^{-\frac14 N_{j,k+1}^2t}+N_{j,k+1}^{-2}M_{j,k+1}^{n+ 2+\varepsilon}\big)
+\sum_jN_{j,k+1}^{n+1+\varepsilon}\ell_{k+1}.
\end{split}
\]
Exactly as in \cites{Dai2026,CDP}, the hierarchy
\eqref{N_and_M_ordering} implies that after taking $A$, $b$, and $m$ large
enough, there exists $\alpha> 0$ such that for any $\varepsilon_0 >0$ and
$\bar n \in \mathbb{N}$,
\[
\|\nabla^n(\mathcal{J}_k^1+\mathcal{J}_k^2 + \mathcal{E}_k)\|_\infty \leq \varepsilon_0 N^{-\alpha}_{1,k+1}(t^{-\frac{n}{2} +\alpha}+1), \qquad n=1,2,\dots, \bar n,
\]
while for $n=0$ the same summation yields
\[
\|R_k(t)- \bar R_k(t)\|_{L^\infty} \lesssim_\varepsilon \varepsilon_0N_{1,k}^{-\alpha}+\mathbbm{1}_{t\leq t_k}N_{J, k}^{\varepsilon},
\]
as in Proposition~\ref{barv-I_estimate_proposition}.
\end{proof}

\begin{proposition}\label{difference_estimate_H}
For all sufficiently small $\alpha>0$, all $\varepsilon_0>0$, and every
$\bar n \in \mathbb{N}$, the parameters $A$ and $b$ may be chosen large enough
so that
\[
\|\nabla^n(H_k(t)- \bar H_k(t) )\|_{L^\infty} \leq_n \varepsilon_0 N_{1,k}^{-\alpha} (t^{-\frac{n}{2}+\alpha}+1), \qquad n=1,2,\dots, \bar n,
\]
\[
\|H_k(t)- \bar H_k(t)\|_{L^\infty} \lesssim_\varepsilon \varepsilon_0N_{1,k}^{-\alpha}+\mathbbm{1}_{t\leq t_k}N_{J, k}^{\varepsilon}.
\]
\end{proposition}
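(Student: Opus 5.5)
The plan mirrors the proof of Proposition~\ref{difference_estimate_proposition}, with the quadratic tensor nonlinearity replaced by the bilinear scalar flux $\bar v_{k+1}\bar h_{k+1}$ and $\mathbb Q$ replaced by $\mathcal R_1\div$.

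\textbf{Step 1: expand the flux.} First I would expand $\bar v_{k+1}\bar h_{k+1}$ using \eqref{app-prin}. Write
\[
-\mathbb P\Delta\psi_{j,k+1,c}=a_{j,k+1,b}\varphi_{j,k+1}\eta^{j,\perp}\sin(N_{j,k+1}\eta^j\cdot x)+\text{lower order},
\]
\[
-\Delta\psi_{j,k+1,b}=\varphi_{j,k+1}\sin(N_{j,k+1}\eta^j\cdot x)+\text{lower order}.
\]
The lower-order terms come from derivatives hitting $a_{j,k+1,b}\varphi_{j,k+1}$ and from the mollification by $\phi_{k+1}$. The velocity factor also carries the $\Lambda_u$ pieces, which pair with $\Lambda_b$ scalar pieces only as cross terms. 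This gives a decomposition
\[
H_k=\mathcal I_{k,b}+\mathcal J_{k,b}^1+\mathcal J_{k,b}^2+\mathcal E_{k,b}.
\]
\begin{itemize}
\item The diagonal self-interaction with the $\sin^2$ replaced by its mean $1/2$ produces exactly $\mathcal I_{k,b}$ from Proposition~\ref{barv-I_estimate_proposition}. This uses the normalization of $\varphi_{j,k+1}$, and absorbs the factor $2$ into the third identity of \eqref{a-convex-integration-identity}.
\item $\mathcal J_{k,b}^1$ carries $a_{j,k+1,b}(\varphi_{j,k+1}^2\sin^2(N_{j,k+1}\eta^j\cdot x)-1)\eta^{j,\perp}$.
\item $\mathcal J_{k,b}^2$ collects the products with $j\neq j'$, covering both the $\Lambda_b$--$\Lambda_b$ and the $\Lambda_u$--$\Lambda_b$ interactions.
\item $\mathcal E_{k,b}$ collects the derivative and mollification remainders.
\end{itemize}
By the triangle inequality and the third and fourth bounds of Proposition~\ref{barv-I_estimate_proposition}, it then suffices to bound $\mathcal J_{k,b}^1$, $\mathcal J_{k,b}^2$ and $\mathcal E_{k,b}$ by $\varepsilon_0N_{1,k}^{-\alpha}(t^{-n/2+\alpha}+1)$.

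\textbf{Step 2: the oscillatory term $\mathcal J_{k,b}^1$ (expected main obstacle).} This is the term I expect to be hardest, because a vector amplitude has no analogue of the parallel-shear cancellation $\mathbb Q(f(\eta\cdot x)\eta^\perp\otimes\eta^\perp)=0$. I would instead use the directional cancellation directly. The function $\varphi_{j,k+1}^2\sin^2(N_{j,k+1}\eta^j\cdot x)-1$ depends only on $\eta^j\cdot x$ (since $\eta^{j,\perp}\cdot\nabla\varphi_{j,k+1}=0$), has zero mean, and is $2\pi/M_{j,k+1}$-periodic. Then $\div\big(f(\eta^j\cdot x)\eta^{j,\perp}\big)=0$, so $\div$ lands only on the slow amplitude $a_{j,k+1,b}$. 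This yields a term of the form $(\eta^{j,\perp}\cdot\nabla a_{j,k+1,b})\,f$, to which I apply $\mathcal R_1=\Delta^{-1}\nabla$ together with Lemma~\ref{l:oscillation_estimate}. The extra derivative on $a$ costs only $N_{J,k}$ by \eqref{eq:a_bounds}, while $\Delta^{-1}\nabla$ gains $M_{j,k+1}^{-1}$. The heat semigroup supplies the factor $e^{-M_{j,k+1}^2t/4}$, and integrating $N_{j,k+1}^2e^{-2N_{j,k+1}^2s}$ in time gives $O(1)$. This should produce
\[
\|\nabla^n\mathcal J_{k,b}^1\|_\infty\lesssim\sum_j N_{J,k}^{1+\varepsilon}\big(N_{j,k+1}^{n-1}e^{-M_{j,k+1}^2t/4}+M_{j,k+1}^{-m+n}N_{J,k}^m\big),
\]
which has the same form as in the $R_k$ case. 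If the Lemma's $\mathcal R\mathbb P$ structure does not transfer verbatim, I would fall back on a direct Fourier computation: the symbol of $\Delta^{-1}\nabla\div$ is bounded, and the frequencies are $\gtrsim M_{j,k+1}$.

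\textbf{Step 3: cross terms and remainders.} For $\mathcal J_{k,b}^2$ I would use the scale separation \eqref{scale-separation}: the time integral of $N_{j,k+1}N_{j',k+1}e^{-(N_{j,k+1}^2+N_{j',k+1}^2)s}$ is $\lesssim N_{j,k+1}^{-1}N_{j',k+1}$ when $j>j'$. The products are supported at frequency $\sim N_{j,k+1}$, so the commutator estimate of Lemma~\ref{l:commutator}, with $L^\infty$ controlled through $C^\varepsilon$ (using that $\mathcal R_1\div$ is Calder\'on--Zygmund), gives the same bound as for $\mathcal J_k^2$. The remainder $\mathcal E_{k,b}$ is handled as $\mathcal E_k^1,\mathcal E_k^2,\mathcal E_k^3$ were:
\begin{itemize}
\item one fewer power of $N_{j,k+1}$ for the derivative terms;
\item the bound $\|\nabla^n(f-\phi_{k+1}*f)\|_\infty\lesssim\ell_{k+1}\|\nabla^{n+1}f\|_\infty$ for the mollification terms.
\end{itemize}

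\textbf{Step 4: summing.} Summing the bounds and invoking \eqref{N_and_M_ordering} with $A$, $b$, $m$ large gives the estimate for $n\ge1$. For $n=0$, I would combine this with the $\mathbbm 1_{t\le t_k}N_{J,k}^{\varepsilon}$ term from Proposition~\ref{barv-I_estimate_proposition}.
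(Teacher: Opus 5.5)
Your proposal is correct and follows essentially the same route as the paper. You decompose $H_k-\mathcal I_{k,b}$ into the mean-zero self-interaction term, the different-direction ($\Lambda_b$--$\Lambda_b$ and $\Lambda_u$--$\Lambda_b$) interactions, and the derivative/mollification remainders, bound each by the same oscillation, commutator and mollification estimates as in Proposition~\ref{difference_estimate_proposition}, and close with the triangle inequality against Proposition~\ref{barv-I_estimate_proposition}. The identity $\div\big(f(\eta^j\cdot x)\eta^{j,\perp}\big)=0$ you use for $\mathcal J^1_{k,b}$ is exactly the vector form of the parallel-shear cancellation the paper invokes for $\mathcal J^1_k$, so this term is no harder than its tensor counterpart.
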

\begin{proof}
Recall the quantity $\mathcal I_{k,b}$ from
Proposition~\ref{barv-I_estimate_proposition}:
\[
\mathcal{I}_{k,b}
= -\int_0^{t} e^{(t-s)\Delta }\sum_{j\in \Lambda_b} N_{j,k+1}^2 e^{-2N_{j,k+1}^2s}
\mathcal R_1\div \!\left(a_{j,k+1,b}\eta^{j,\perp}\right) \, ds.
\]
By the triangle inequality,
\begin{equation}\label{eq:Hk_barHk_triangle}
\|\nabla^n(H_k-\bar H_k)\|_\infty
\le \|\nabla^n(H_k-\mathcal I_{k,b})\|_\infty+\|\nabla^n(\bar H_k-\mathcal I_{k,b})\|_\infty.
\end{equation}

We first estimate $H_k-\mathcal I_{k,b}$. The expansion is the same as in
Proposition~\ref{difference_estimate_proposition}, except that it is now
applied to the product $\bar v_{k+1}\bar h_{k+1}$. The unidirectional
contribution has the form
\[
\begin{split}
\bar v_{j,k+1}\bar h_{j,k+1}
&=N_{j,k+1}^2e^{-2N_{j,k+1}^2t}a_{j,k+1,b}\eta^{j,\perp}\\
&\quad+N_{j,k+1}^2e^{-2N_{j,k+1}^2t}a_{j,k+1,b}\big(\varphi_{j,k+1}^2\sin^2(N_{j,k+1}\eta^j\cdot x)-1\big)\eta^{j,\perp}\\
&\quad+\text{\emph{l.o.t.}},
\end{split}
\]
and the interactions of different directions are again lower-order terms.
Accordingly, we write
\[
H_k-\mathcal I_{k,b}=\widetilde{\mathcal J}_k^1+\widetilde{\mathcal J}_k^2+\widetilde{\mathcal E}_k,
\]
where
\[
\begin{split}
\widetilde{\mathcal J}_k^1
&=-\int_0^t e^{(t-s)\Delta}\sum_{j\in\Lambda_b}N_{j,k+1}^2e^{-2N_{j,k+1}^2s}\\
&\qquad\cdot\mathcal R_1\div\!\left(a_{j,k+1,b}(\varphi_{j,k+1}^2\sin^2(N_{j,k+1}\eta^j\!\cdot x)-1)\eta^{j,\perp}\right)\,ds,
\end{split}
\]
$\widetilde{\mathcal J}_k^2$ collects the different-direction interactions, and
$\widetilde{\mathcal E}_k$ collects all lower-order derivative and
mollifier-removal terms, including $\mathfrak{E}_{k,b}^3$ and analogous
contributions.

This is exactly the analogue of the decomposition in
Proposition~\ref{difference_estimate_proposition}, with the substitutions
\[
(\mathbb Q,\mathcal J_k^1,\mathcal J_k^2,\mathcal E_k,a_{j,k+1,u})
\rightsquigarrow
(\mathcal R_1\div,\widetilde{\mathcal J}_k^1,\widetilde{\mathcal J}_k^2,\widetilde{\mathcal E}_k,a_{j,k+1,b}).
\]
Hence the same oscillation, commutator, and mollification estimates from
\cites{Dai2026,CDP} together with
Proposition~\ref{difference_estimate_proposition} yield, for $n\ge1$,
\[
\|\nabla^n\widetilde{\mathcal J}_k^1\|_\infty
\lesssim \sum_j N_{J,k}^{1+\varepsilon}
\Big(N_{j,k+1}^{n-1}e^{-M_{j,k+1}^2t/4}+M_{j,k+1}^{-m+n}N_{J,k}^{m}\Big),
\]
and the corresponding $n=0$ bound follows exactly as in the estimate for
$\mathcal J_{k,u}^1$ above.

Likewise, $\widetilde{\mathcal J}_k^2$ satisfies the same bound as
$\mathcal J_k^2$, and $\widetilde{\mathcal E}_k$ satisfies the same bound as
$\mathcal E_k$, up to an absolute constant. Combining these estimates exactly
as at the end of Proposition~\ref{difference_estimate_proposition}, we obtain
the following: for any $\varepsilon_0>0$, $\bar n\in\mathbb N$, and
$\alpha\in(0,\frac1{10})$, after choosing $A,b,m$ sufficiently large,
\begin{equation}\label{eq:H_minus_I_bound}
\|\nabla^n(H_k-\mathcal I_{k,b})\|_\infty
\le \varepsilon_0 N_{1,k+1}^{-\alpha}\big(t^{-n/2+\alpha}+1\big),\qquad n=1,\dots,\bar n,
\end{equation}
and for $n=0$:
\[
\|H_k-\mathcal I_{k,b}\|_\infty
\lesssim_\varepsilon \varepsilon_0N_{1,k+1}^{-\alpha}+\mathbbm 1_{t\le t_k}N_{J,k}^{\varepsilon}.
\]

On the other hand, Proposition~\ref{barv-I_estimate_proposition} gives
\[
\|\nabla^n(\bar H_k-\mathcal I_{k,b})\|_\infty
\le \varepsilon_0 N_{1,k}^{-\alpha}\big(t^{-n/2+\alpha}+1\big),\qquad n=1,\dots,\bar n,
\]
and the corresponding $n=0$ bound. Substituting these estimates together with
\eqref{eq:H_minus_I_bound} into \eqref{eq:Hk_barHk_triangle}, and using that
$N_{1,k+1}^{-\alpha}\le N_{1,k}^{-\alpha}$, after adjusting constants we conclude
\[
\|\nabla^n(H_k-\bar H_k)\|_\infty
\le \varepsilon_0 N_{1,k}^{-\alpha}(t^{-n/2+\alpha}+1),\qquad n=1,\dots,\bar n,
\]
and
\[
\|H_k-\bar H_k\|_\infty
\lesssim_\varepsilon \varepsilon_0N_{1,k}^{-\alpha}+\mathbbm 1_{t\le t_k}N_{J,k}^{\varepsilon}.
\]
This proves the proposition.
\end{proof}

\subsection{Bounds on the solution and residual}

We now show that the principal part $(v,h)$ solves a forced system with small
forcing terms.

\begin{proposition}\label{prop_f_estimate}
For any $\varepsilon_0>0$, we may arrange that the principal part $(v,h)$ satisfies
    \begin{equation}\label{v_equation}
    \begin{split}
 \partial_tv-\Delta v+\mathbb P\div (v\otimes v)&=\mathbb P\div f_u,\\
 \partial_th-\Delta h+\mathbb P\div (vh)&=\mathbb P\div f_b,
 \end{split}
    \end{equation}
    for forcing terms $f_u$ and $f_b$ satisfying
\begin{align}\label{f_bound}
    \|\nabla^n f_u\|_{C^\kappa}+  \|\nabla^n f_b\|_{C^\kappa}\lesssim_n \varepsilon_0 (t^{-1 - \frac{n}{2} +\alpha}+1)
\end{align}
for some $0<\kappa<\alpha<1$.
In addition,
\begin{align}
    \|\nabla^nv_k(t)\|_{L^p}+ \|\nabla^nh_k(t)\|_{L^p}&\lesssim ((t/N_{1,k+1})^{\alpha}+2^{-k/p})t^{-\frac12(1+n)}+N_{1,k+1}^{-\alpha},\label{vk-pointwise-bounds}
    \end{align}
    and
    \begin{align}
    \|\nabla^nv(t)\|_{L^\infty}+\|\nabla^n\bar v(t)\|_{L^\infty}+ \|\nabla^nh(t)\|_{L^\infty}+\|\nabla^n\bar h(t)\|_{L^\infty}&\lesssim t^{-\frac12(1+n)}\label{v-pointwise-bounds}
\end{align}
for $n=0,1,2,\ldots,\overline n$, and
\begin{align}\label{v-critical-bounds}
    \|v\|_{L^1([t',t], t^{-\frac12}dt; L^\infty)}+\|v\|_{L^2([t',t]; L^\infty)}^2\lesssim 1+(\log A)^{-1}\log(t/{t'}),
\end{align}
\begin{align}\label{h-critical-bounds}
    \|h\|_{L^1([t',t], t^{-\frac12}dt; L^\infty)}+\|h\|_{L^2([t',t]; L^\infty)}^2\lesssim 1+(\log A)^{-1}\log(t/{t'}).
\end{align}
\end{proposition}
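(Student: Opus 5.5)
We will derive the forced system by a telescoping computation, following the corresponding proposition in \cite{CDP}. By \eqref{prin}, each $v_k$ solves $\partial_t v_k-\Delta v_k+\mathbb P\div(\bar v_{k+1}\otimes\bar v_{k+1})=0$ with $v_k(0)=0$. Likewise $h_k$ solves $\partial_t h_k-\Delta h_k+\div(\bar v_{k+1}\bar h_{k+1})=0$.

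Summing over $k$ and writing $v\otimes v=\sum_k v_k\otimes v_k+\sum_{k\neq k'}v_k\otimes v_{k'}$, we define $f_u$ to collect three contributions. The first is the diagonal discrepancy $\bar v_{k+1}\otimes\bar v_{k+1}-v_{k+1}\otimes v_{k+1}$ (the index shift telescopes, since level $k+1$ of $v$ is matched by the source of level $k$). The second is the cross terms $v_k\otimes v_{k'}$ with $k\ne k'$. The third is the linear defect $(\partial_t-\Delta)(v_0-\bar v_0)$ at the top level. The last is absorbed via $\bar v_k=\div\bar R_k$, $v_k=\div R_k$, and the fact that $\bar v_k$ solves the heat equation exactly up to $\mathbb P$ and mollifier commutators. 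We define $f_b$ analogously from $\bar v_{k+1}\bar h_{k+1}-v_{k+1}h_{k+1}$ and the cross products $v_kh_{k'}$. The scalar equation is in divergence form with $\div v=0$, so $\div(vh)$ has the same structure. We then write the right side as $\mathbb P\div f_b$ in the convention of \eqref{v_equation}, adjusting by the gradient part if needed.

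The diagonal discrepancies are bounded by
\[
|\bar v_{k+1}-v_{k+1}|\,\big(|v_{k+1}|+|\bar v_{k+1}|\bigr),
\]
and similarly for $h$. Here $v_{k+1}-\bar v_{k+1}=\div(R_{k+1}-\bar R_{k+1})$ is controlled by Proposition \ref{difference_estimate_proposition}, and $h_{k+1}-\bar h_{k+1}$ by Proposition \ref{difference_estimate_H}. Each of these gains $\varepsilon_0N_{1,k+1}^{-\alpha}t^{-1/2+\alpha}$. Multiplying by the pointwise bound $t^{-1/2}$ for each level gives $\varepsilon_0 N_{1,k}^{-\alpha}t^{-1+\alpha}$, which is summable in $k$. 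Derivatives cost $t^{-n/2}$, and the $C^\kappa$ norm is interpolated with $\kappa<\alpha$.

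The cross terms use frequency and time separation. For $k<k'$, $v_{k'}$ is essentially supported in time where $t\lesssim N_{J,k'}^{-2}$ up to exponential tails. Meanwhile $v_k$ is bounded by $\min(N_{J,k},t^{-1/2})$. The product is therefore bounded by $t^{-1/2}N_{J,k}$ on $t\lesssim N_{1,k'}^{-2}$, plus exponentially small tails. This gives $t^{-1+\alpha}N_{J,k}N_{1,k'}^{-1+2\alpha}$, and the sum is small once $A$ and $b$ are large.

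For the bounds \eqref{vk-pointwise-bounds}, we use $v_k=\bar v_k+(v_k-\bar v_k)$. The explicit formula \eqref{app-prin} gives
\[
\|\nabla^n\bar v_k\|_{L^p}\lesssim\sum_jN_{j,k}^{1+n}e^{-N_{j,k}^2t}|\supp\psi_{j,k}|^{1/p}.
\]
Combined with $|\Omega_k|\le2^{-k}$ from \eqref{Omega_volume_estimate}, this yields the $2^{-k/p}t^{-(1+n)/2}$ term; the remainder is handled by the difference propositions. The same holds for $h_k$ via $\psi_{j,k,b}$.

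Summing over $k$ gives \eqref{v-pointwise-bounds}. At a fixed $t$, only the levels with $N_{j,k}^2t\sim1$ contribute $t^{-(1+n)/2}$, and the others are geometrically small. For \eqref{v-critical-bounds} and \eqref{h-critical-bounds}, each level contributes $O(1)$ to $\int\|v\|_\infty^2dt$ over its active window. The number of windows in $[t',t]$ is about $\log(t/t')/\log(N_{1,k+1}/N_{1,k})\lesssim(\log A)^{-1}\log(t/t')$.

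The main obstacle is the rigorous control of the cross terms $v_kh_{k'}$ and $v_k\otimes v_{k'}$ in $C^\kappa$ with the sharp time weight $t^{-1-n/2+\alpha}$. I would first try handling them by the time-separation splitting at $t_k$ from \eqref{time-sequence}, using the rough estimate on $[0,t_k]$.
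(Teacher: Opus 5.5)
Your overall architecture matches the paper's: you telescope the Duhamel formulas \eqref{prin}, collect diagonal discrepancies and cross-level products into $f_u,f_b$, bound the discrepancies with Propositions~\ref{difference_estimate_proposition} and~\ref{difference_estimate_H}, use $|\Omega_k|\le 2^{-k}$ for the $L^p$ bounds, and count levels for the critical norms. However, your level-$0$ bookkeeping is wrong, and it hides the one structural input the proof needs.

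Since $v=\sum_{k\ge0}v_k$ and $(\partial_t-\Delta)v_k=-\mathbb P\div(\bar v_{k+1}\otimes\bar v_{k+1})$ exactly, no ``linear defect'' $(\partial_t-\Delta)(v_0-\bar v_0)$ appears anywhere. What the telescoping actually leaves unmatched is the self-interaction $\mathbb P\div(v_0\otimes v_0)$, and $\div(v_0h_0)$ for the scalar. Neither has a counterpart among the sources $\bar v_{k+1}\otimes\bar v_{k+1}$, $k\ge0$. This term is of size $O(1)$ for $t\lesssim1$, so it cannot be placed in $f_u$ consistently with the factor $\varepsilon_0$ in \eqref{f_bound}.

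The paper removes it using the special bottom level. Since $a_{1,0,u}\equiv1$ and all other $a_{j,0,u}$ and all $a_{j,0,b}$ vanish, $\bar v_0$ is a single shear flow with $\mathbb P\div(\bar v_0\otimes\bar v_0)=0$. The paper also records $\bar h_0=0$, so $\div(\bar v_0\bar h_0)=0$. Hence $\mathbb P\div(v_0\otimes v_0)=\mathbb P\div f_{0,u}$ with $f_{0,u}=v_0\otimes v_0-\bar v_0\otimes\bar v_0$ small; that is, the discrepancy sum in \eqref{def-fu-fb} starts at $k=0$. As written, your $f_u$ either fails to produce \eqref{v_equation} or fails \eqref{f_bound}.

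Your cross-term argument also treats $v_{k'}$ as if it were $\bar v_{k'}$. Exponential concentration on $t\lesssim N_{1,k'}^{-2}$, and the bound $|v_k|\lesssim N_{J,k}$, hold only for the principal parts. The remainder $v_{k'}-\bar v_{k'}$ satisfies merely $\varepsilon_0N_{1,k'+1}^{-\alpha}(t^{-1/2+\alpha}+1)$, with no time localization.

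The paper closes the step you flag as the main obstacle not by splitting time at $t_k$, but by splitting each factor into the two summands of \eqref{alt_vk_bound}. Any product containing a difference factor is handled like the diagonal terms: use the summable weight $N_{1,k+1}^{-\alpha}$ together with $\sum_k\|\nabla^i\bar v_k\|_{C^\kappa}\lesssim t^{-\frac12(1+i+\kappa)}$. Only the principal--principal products use frequency separation, through $N_{J,k_2-1}\sim N_{j,k_2}^{b^{-j/J}}$, which is essentially your computation.

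The same remark applies to \eqref{v-critical-bounds}. There you must also include the difference parts, which contribute $O(1)$ after summing in $k$. The paper bounds the $L^1([t',t],t^{-1/2}dt;L^\infty)$ norm first and then obtains the $L^2_tL^\infty_x$ bound by interpolation with \eqref{v-pointwise-bounds}.
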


\begin{proof}
From the definitions in \eqref{app-prin} and the bound \eqref{eq:Dpsi_bounds},
we have
\begin{equation}\label{vk_bar_estimate}
\begin{split}
  &\quad  \|\nabla^n\overline v_k(t)\|_{p}+\|\nabla^n\overline h_k(t)\|_{p}\\
    &\lesssim |\Omega_k|^\frac1p\left(\|\nabla^n\overline v_k(t)\|_{\infty}+\|\nabla^n\overline h_k(t)\|_{\infty}\right)\\
    &\lesssim 2^{-k/p}\sum_jN_{j,k}^{1+n}e^{-N_{j,k}^2t}
    \end{split}
\end{equation}
which yields the estimate for $(\bar v, \bar h)$ in
\eqref{v-pointwise-bounds} after summing over $k\geq0$. Combining this with
Proposition~\ref{difference_estimate_proposition} and
Proposition~\ref{difference_estimate_H}, we get
\begin{equation}\label{alt_vk_bound}
\begin{split}
    \|\nabla^nv_k(t)\|_p&\leq \|\nabla^n(v_k(t)-\bar{v}_k(t))\|_p+\|\nabla^n\bar{v}_k(t)\|_p\\
    &\lesssim \|\nabla^{n+1}(R_k(t)-\bar{R}_k(t))\|_p+\|\nabla^n\bar{v}_k(t)\|_p\\
    &\lesssim N_{1,k+1}^{-\alpha}(t^{-\frac12-\frac n2+\alpha}+1)+2^{-k/p}\sum_jN_{j,k}^{1+n}e^{-N_{j,k}^2t},
\end{split}
\end{equation}
\begin{equation}\label{alt_hk_bound}
\begin{split}
    \|\nabla^nh_k(t)\|_p&\leq \|\nabla^n(h_k(t)-\bar{h}_k(t))\|_p+\|\nabla^n\bar{h}_k(t)\|_p\\
    &\lesssim \|\nabla^{n+1}(H_k(t)-\bar{H}_k(t))\|_p+\|\nabla^n\bar{h}_k(t)\|_p\\
    &\lesssim N_{1,k+1}^{-\alpha}(t^{-\frac12-\frac n2+\alpha}+1)+2^{-k/p}\sum_jN_{j,k}^{1+n}e^{-N_{j,k}^2t},
\end{split}
\end{equation}
and therefore \eqref{vk-pointwise-bounds}--\eqref{v-pointwise-bounds} follow.

To define $f_u$ and $f_b$, recall that for $k\geq0$,
\[
\begin{split}
v_k(t) &= -\int_0^{t} e^{(t-s)\Delta }\mathbb P\div (\bar v_{k+1}\otimes \bar v_{k+1})(s) \, ds\\
&=\int_0^{t} e^{(t-s)\Delta }\mathbb P\div (-v_{k+1}\otimes  v_{k+1}+f_{k+1,u})(s) \, ds,
\end{split}
\]
where
\[
f_{k,u}=v_{k} \otimes  v_{k}-\bar v_{k}\otimes \bar v_{k}
=(v_{k}-\bar v_{k}) \otimes v_{k} + \bar v_{k} \otimes (v_{k}-\bar v_{k}),
\]
and similarly
\[
\begin{split}
h_k(t) &= -\int_0^{t} e^{(t-s)\Delta }\div (\bar v_{k+1} \bar h_{k+1})(s) \, ds\\
&=\int_0^{t} e^{(t-s)\Delta }\div (-v_{k+1} h_{k+1}+f_{k+1,b})(s) \, ds,
\end{split}
\]
with
\[
f_{k,b}=v_{k} h_{k}-\bar v_{k} \bar h_{k}
=(v_{k}-\bar v_{k}) h_{k} + \bar v_{k} (h_{k}-\bar h_{k}).
\]
Summing over $k\geq0$, we obtain \eqref{v_equation} in mild form provided we set
\begin{equation}\label{def-fu-fb}
\begin{split}
f_u&=\sum_{k\geq0} f_{k,u}   +\sum_{k_1\neq k_2}v_{k_1}\otimes v_{k_2},\\
f_b&=\sum_{k\geq0} f_{k,b}   +\sum_{k_1\neq k_2}v_{k_1} h_{k_2}.
\end{split}
\end{equation}
Indeed, by \eqref{def_psi_0}, $\bar v_0$ is a shear flow, the coefficients
$a_{j,0,u}$ are constant, and $\bar h_0=0$. Thus the zero mode is absorbed into
$f_{0,u}$ and $f_{0,b}$ exactly as in \cites{Dai2026,CDP}. Furthermore,
Proposition~\ref{difference_estimate_proposition},
Proposition~\ref{difference_estimate_H}, and \eqref{v-pointwise-bounds} imply,
after shrinking $\alpha$ and fixing $0<\kappa<\alpha$,
\begin{align*}
\|\nabla^nf_{k,u}(t)\|_{C^\kappa} &\lesssim \sum_{i=0}^n\|\nabla^i(v_{k}-\bar v_{k})\|_{C^\kappa}(\|\nabla^{n-i}v_{k}\|_{C^\kappa}+\|\nabla^{n-i}\bar v_{k}\|_{C^\kappa})\\
&\lesssim\sum_{i=0}^n\varepsilon_0N_{1,k+1}^{-2\alpha}(t^{-\frac12(1+i+\kappa)+2\alpha}+1)(t^{-\frac12(1+n-i+\kappa)}+1)\\
&\lesssim \varepsilon_0N_{1,k+1}^{-2\alpha}(t^{-1-\frac n2+\alpha+(\alpha-\kappa)}+1),
\end{align*}
\begin{align*}
\|\nabla^nf_{k,b}(t)\|_{C^\kappa} &\lesssim \sum_{i=0}^n\|\nabla^i(v_{k}-\bar v_{k})\|_{C^\kappa}\|\nabla^{n-i}h_{k}\|_{C^\kappa}+\|\nabla^i(h_{k}-\bar h_{k})\|_{C^\kappa}\|\nabla^{n-i}\bar v_{k}\|_{C^\kappa}\\
&\lesssim\sum_{i=0}^n\varepsilon_0N_{1,k+1}^{-2\alpha}(t^{-\frac12(1+i+\kappa)+2\alpha}+1)(t^{-\frac12(1+n-i+\kappa)}+1)\\
&\lesssim \varepsilon_0N_{1,k+1}^{-2\alpha}(t^{-1-\frac n2+\alpha+(\alpha-\kappa)}+1).
\end{align*}
Summing over $k$, we obtain the bound claimed in \eqref{f_bound} for the first
sum in $(f_u,f_b)$.

For the mixed-frequency terms in \eqref{def-fu-fb}, we use the same four-way
decomposition as in \cites{Dai2026,CDP}, based on the two summands in
\eqref{alt_vk_bound} and \eqref{alt_hk_bound}. Since $v_k$ and $h_k$ obey the
same pointwise bounds, the scalar interaction $v_{k_1}h_{k_2}$ is handled in
exactly the same way as $v_{k_1}\otimes v_{k_2}$. The low-low and mixed pieces
contribute
\[
t^{-1-\frac n2+2\alpha-\kappa},
\]
just as in those references. For the high-high contribution, using
\eqref{N_definition} and
\[
N_{J,k_2-1}\sim N_{j,k_2}^{b^{-j/J}},
\]
we obtain
\begin{align*}
\sum_{i=0}^n\sum_{j}\sum_{k_2}N_{J,k_2-1}^{1+i+\kappa}N_{j,k_2}^{1+n-i+\kappa}e^{-N_{j,k_2}^2t}
&\lesssim \sum_j\sum_{k_2}N_{j,k_2}^{n+2-(1-b^{-j/J})+2\kappa}e^{-N_{j,k_2}^2t}\\
&\lesssim t^{-1-\frac n2+\frac12(1-b^{-1/J}-4\kappa)}.
\end{align*}
Consequently,
\begin{align*}
\Big\|\nabla^n\sum_{k_1\neq k_2}v_{k_1}\otimes v_{k_2}\Big\|_{C^\kappa}
+\Big\|\nabla^n\sum_{k_1\neq k_2}v_{k_1} h_{k_2}\Big\|_{C^\kappa}
\lesssim t^{-1-\frac n2+2\alpha-\kappa}+t^{-1-\frac n2+\frac12(1-b^{-1/J}-4\kappa)}.
\end{align*}
Taking $\alpha,\kappa>0$ smaller if needed, we conclude \eqref{f_bound}.

To prove \eqref{v-critical-bounds}, it suffices to control the
$L^1([t',t],t^{-1/2}dt;L^\infty)$ norm and then interpolate with
\eqref{v-pointwise-bounds}. Starting from \eqref{alt_vk_bound}, we split the
resulting integral exactly as in \cites{Dai2026,CDP} into the regions
$N_{j,k}<t^{-1/2}$, $t^{-1/2}\le N_{j,k}\le (t')^{-1/2}$, and
$N_{j,k}>(t')^{-1/2}$. The low- and high-frequency pieces are $O(1)$, while
the middle range is controlled by the number of indices $k$ satisfying
\[
t^{-1/2}\le N_{j,k}\le (t')^{-1/2}.
\]
Since \eqref{N_definition} implies
\[
\frac{N_{j,k+1}}{N_{j,k}}\sim A^{b^{k+1+(j-1)/J}-b^{k+(j-1)/J}}\gtrsim A^{b-1},
\]
and that counting argument yields
\[
\|v\|_{L^1([t',t],t^{-\frac12}dt;L^\infty)}
\lesssim 1+\log_{A^{b-1}}\!\big((t')^{-1/2}/t^{-1/2}\big)
\lesssim 1+(\log A)^{-1}\log(t/t').
\]
This proves \eqref{v-critical-bounds}. The argument for
\eqref{h-critical-bounds} is identical, with \eqref{alt_hk_bound} replacing
\eqref{alt_vk_bound}. The corresponding $L^2_tL^\infty_x$ bounds then follow
by interpolation with \eqref{v-pointwise-bounds}, completing the proof.
\end{proof}

\section{Construction of the corrector}\label{sec:corrector}

Let $(U,H)$ be any classical solution to system \eqref{eq-main} on the time interval $[0,T]$ which, once again, we replace with $[-T_*, T-T_*]$ by translation. We may define
\begin{align*}
    \sup_{0\leq n\leq 10}\left(\|\nabla^nU\|_{L_{t,x}^\infty(\mathbb T^2\times[0,T-T_*])}+\|\nabla^n H\|_{L_{t,x}^\infty(\mathbb T^2\times[0,T-T_*])}\right)\eqcolon C_{U,H}<\infty.
\end{align*}
The family of solutions $(u^{(\sigma)},b^{(\sigma)})$ will arise by modulating a frequency scale $N_0>0$ (see Section~\ref{sec:proof}). Having constructed $(v,h)$ on $\mathbb T^2\times[0,\infty)$, we extend them by $0$ to the full time interval $\mathbb R$, then rescale them to $(v^{N_0},h^{N_0})$, which we regard as $2\pi/N_0$-periodic fields on $\mathbb R^2\times[0,\infty)$. Here, for an arbitrary pair $(V_u, V_b)$ on $\mathbb R^2\times \mathbb R^2$, we define the rescaling
\begin{align*}
    V_u^{N_0}(x,t)\coloneqq N_0V_u(N_0x,N_0^2t), \qquad V_b^{N_0}(x,t)\coloneqq N_0V_b(N_0x,N_0^2t).
\end{align*}
Recall that critical norms such as $L^\infty((0,\infty); \dot W^{-1,\infty}(\mathbb R^2))$ are invariant under this transformation.

From there, we construct the exact solution of \eqref{eq-main} as
\[
u = U + v^{N_0} + w^{N_0},\qquad b = H + h^{N_0} + \zeta^{N_0},
\]
with $(w,\zeta)$ a small corrector pair. To simplify the computation, we instead construct the rescaled solution
\[
u^{1/N_0}=U^{1/N_0}+v+w,\qquad b^{1/N_0}=H^{1/N_0}+h+\zeta.
\]
Then $(w,\zeta)$ should satisfy
\[
\begin{split}
&\quad\partial_tw-\Delta w+\mathbb P\div\!\Big(w\otimes w+2(U^{1/N_0}+v)\odot w\Big)\\
&=-\mathbb P\div\!\Big(f_u+2U^{1/N_0}\odot v\Big),\\
&\quad\partial_t\zeta-\Delta \zeta+\div\!\Big(w\zeta+(U^{1/N_0}+v)\zeta +w(H^{1/N_0}+h)\Big)\\
&=-\div\!\Big(f_b+U^{1/N_0} h+vH^{1/N_0}\Big),\\
(w,\zeta)|_{t=0}&=(0,0).
\end{split}
\]
After the translation and rescaling, the lifetime of $(U^{1/N_0},H^{1/N_0})$ is $[-N_0^2T_*,N_0^2(T-T_*)]$.

\subsection{Semigroup estimates}

We next record the semigroup estimates needed for the fixed point construction
of the coupled corrector pair $(w,\zeta)$. Let
\[
\bar T=\min\{N_0^2(T-T_*),\bar C\}
\]
for some large $\bar C>1$ to be chosen later. For suitable $\alpha$ and
$\kappa$, define
\begin{equation}\label{def-X}
\begin{split}
X&=\Big\{V\in C^0((0,\bar T]; C^{1,\kappa}(\mathbb T^2; \mathbb R^2)): \\
& \qquad\qquad \|V\|_{X}:= \sup_{t\in(0,\bar T]}(t^{\frac{1-\alpha}2} \|V\|_{L^\infty}+t^{\frac{2-\alpha}2} \|\nabla V\|_{C^{\kappa}})<\infty\Big\}.
\end{split}
\end{equation}
and
\begin{equation}\label{def-Y}
\begin{split}
Y&=\Big\{\phi\in C^0((0,\bar T]; C^{1,\kappa}(\mathbb T^2; \mathbb R^{2\times d})): \\
& \qquad\qquad \|\phi\|_{Y}:= \sup_{t\in(0,\bar T]}(t^{1-\alpha} \|\phi\|_{L^\infty}+t^{\frac32-\alpha} \|\nabla \phi\|_{C^{\kappa}})<\infty\Big\}
\end{split}
\end{equation}
where $d=1$ or 2.
Define the product spaces
\[
\mathcal X\coloneqq X\times X,\qquad \mathcal Y\coloneqq Y\times Y, \]
with
\[
\|(W,Z)\|_{\mathcal X}\coloneqq \|W\|_X+\|Z\|_X,\quad \|(\phi_u,\phi_b)\|_{\mathcal Y}\coloneqq \|\phi_u\|_Y+\|\phi_b\|_Y.
\]
This Banach-space setup and the semigroup estimate below follow the same
corrector construction as in \cites{Dai2026,CDP}; we retain the notation here
because it will also be used in the fixed point argument. The following
product estimate is immediate from
\[
\|gh\|_{C^{1,\kappa}}\lesssim \|g\|_{C^{1,\kappa}}\|h\|_{L^\infty}+\|g\|_{L^\infty}\|h\|_{C^{1,\kappa}}.
\]
\begin{lemma}\label{product_X_Y_lemma}
    If $g,h\in X$, then $g\otimes h,g\odot h\in Y$ with
    \begin{align*}
        \|g\otimes h\|_Y+\|g\odot h\|_Y\lesssim \|g\|_X\|h\|_X.
    \end{align*}
\end{lemma}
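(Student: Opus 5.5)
The claim is a pointwise-in-time statement, so I would fix $t\in(0,\bar T]$ and estimate each of the two terms in the $Y$-norm of $g\otimes h$ separately. The same argument covers $g\odot h=\frac12(g\otimes h+h\otimes g)$ by the triangle inequality, so I treat only $g\otimes h$. The weights are designed to be additive: the $Y$-weights are $t^{1-\alpha}$ and $t^{\frac32-\alpha}$, the $X$-weights are $t^{\frac{1-\alpha}2}$ and $t^{\frac{2-\alpha}2}$, and
\[
\tfrac{1-\alpha}{2}+\tfrac{1-\alpha}{2}=1-\alpha,\qquad \tfrac{1-\alpha}{2}+\tfrac{2-\alpha}{2}=\tfrac32-\alpha .
\]
This exact matching is the whole content of the lemma.

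\textbf{$L^\infty$ part.} We have
\[
t^{1-\alpha}\|g\otimes h\|_{L^\infty}\le \big(t^{\frac{1-\alpha}2}\|g\|_{L^\infty}\big)\big(t^{\frac{1-\alpha}2}\|h\|_{L^\infty}\big)\le \|g\|_X\|h\|_X .
\]

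\textbf{Gradient part.} By the Leibniz rule, $\nabla(g\otimes h)=\nabla g\otimes h+g\otimes\nabla h$. For the Hölder seminorm I would use $\|FG\|_{C^\kappa}\lesssim \|F\|_{C^\kappa}\|G\|_{L^\infty}+\|F\|_{L^\infty}\|G\|_{C^\kappa}$. This gives
\[
\|\nabla g\otimes h\|_{C^\kappa}\lesssim \|\nabla g\|_{C^\kappa}\|h\|_{L^\infty}+\|\nabla g\|_{L^\infty}\|h\|_{C^\kappa}.
\]
The first term is fine: after multiplying by $t^{\frac32-\alpha}$ it is bounded by $\|g\|_X\|h\|_X$, by the weight identity above. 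The second term needs care, since the $X$-norm does not directly control $\|\nabla g\|_{L^\infty}$ or $\|h\|_{C^\kappa}$.

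\textbf{Main obstacle: the cross term.} Here the convention stated in Section~\ref{sec:notation} helps: $\|\cdot\|_{C^\kappa}$ denotes the Hölder \emph{seminorm}. On $\mathbb T^2$ every component of $\nabla g$ has a zero, because a periodic function's derivative integrates to zero over the torus. Hence
\[
\|\nabla g\|_{L^\infty}\lesssim [\nabla g]_{C^\kappa}.
\]
Interpolation gives $[h]_{C^\kappa}\lesssim \|h\|_{L^\infty}^{1-\kappa}\|\nabla h\|_{L^\infty}^{\kappa}$, and the same zero argument bounds $\|\nabla h\|_{L^\infty}$ by $\|\nabla h\|_{C^\kappa}$. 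Combining,
\[
\|\nabla g\|_{L^\infty}\|h\|_{C^\kappa}\lesssim \|\nabla g\|_{C^\kappa}\|h\|_{L^\infty}^{1-\kappa}\|\nabla h\|_{C^\kappa}^{\kappa}.
\]
Multiplying by $t^{\frac32-\alpha}$ and using $t^{\frac{1-\alpha}2}\|h\|_{L^\infty}\le\|h\|_X$ and $t^{\frac{2-\alpha}2}\|\nabla h\|_{C^\kappa}\le \|h\|_X$ leaves a residual power
\[
t^{\frac32-\alpha-\frac{2-\alpha}2-(1-\kappa)\frac{1-\alpha}2-\kappa\frac{2-\alpha}2}=t^{-\kappa/2}.
\]
This power is bounded only for $t\gtrsim1$, so the estimate is not yet uniform on $(0,\bar T]$.

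\textbf{Fallback.} The cleaner route is the one the paper cites: the bound
\[
\|gh\|_{C^{1,\kappa}}\lesssim \|g\|_{C^{1,\kappa}}\|h\|_{L^\infty}+\|g\|_{L^\infty}\|h\|_{C^{1,\kappa}}
\]
with the $C^{1,\kappa}$ quantities read as $\|\nabla\cdot\|_{C^\kappa}$ seminorms. The lower-order cross terms $\nabla g\,\nabla h$-type pieces are absorbed by Gagliardo–Nirenberg interpolation between $L^\infty$ and $\dot C^{1,\kappa}$. Because both $X$-weights scale consistently with that interpolation (each derivative of order $s$ carries weight $t^{\frac{1-\alpha+s}{2}}$), the resulting powers of $t$ should cancel exactly. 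I would verify this bookkeeping as the final step.
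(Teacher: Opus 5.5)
There is a genuine gap, and it sits exactly at the cross term you flag. Your $L^\infty$ part and the term $\|\nabla g\|_{C^\kappa}\|h\|_{L^\infty}$ are fine, but the cross term $\|\nabla g\|_{L^\infty}\|h\|_{C^\kappa}$ is never actually bounded.

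\textbf{Why your attempts do not close it.} Your concrete bound loses $t^{-\kappa/2}$ because the zero-of-the-derivative estimate $\|\nabla g\|_{L^\infty}\lesssim\|\nabla g\|_{C^\kappa}$ discards the information in $\|g\|_{L^\infty}$. Your fallback is in fact the paper's whole argument: it cites the tame $C^{1,\kappa}$ product bound and calls the lemma immediate. However, you neither prove that bound nor correctly explain why the weights match.

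The rule ``order $s \leftrightarrow t^{\frac{1-\alpha+s}{2}}$'' is false for this $X$. The seminorm $\|\nabla V\|_{C^\kappa}$ (order $1+\kappa$) carries $t^{\frac{2-\alpha}{2}}$, not $t^{\frac{2-\alpha+\kappa}{2}}$, so $X$ is not parabolically scaling-consistent. Worse, bookkeeping the cross term by that rule against the actual $Y$-weight $t^{\frac32-\alpha}$ reproduces exactly your $t^{-\kappa/2}$ residual. (Also, at this order there are no $\nabla g\,\nabla h$ pieces; the cross terms are $\|\nabla g\|_{L^\infty}\|h\|_{C^\kappa}$ and its mirror.)

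\textbf{The missing ingredient: sharp interpolation.} Since $g$ is periodic on $\mathbb R^2$, for every $r>0$ you have $|\partial_i g(x)|\le 2\|g\|_{L^\infty}r^{-1}+\|\nabla g\|_{C^\kappa}r^{\kappa}$. Optimizing in $r$ gives, with no lower-order term,
\[
\|\nabla g\|_{L^\infty}\lesssim\|g\|_{L^\infty}^{\frac{\kappa}{1+\kappa}}\|\nabla g\|_{C^\kappa}^{\frac{1}{1+\kappa}}.
\]
Apply this to both $g$ and $h$, combine it with $\|h\|_{C^\kappa}\le 2\|h\|_{L^\infty}^{1-\kappa}\|\nabla h\|_{L^\infty}^{\kappa}$, and use Young's inequality:
\[
\|\nabla g\|_{L^\infty}\|h\|_{C^\kappa}\lesssim\big(\|\nabla g\|_{C^\kappa}\|h\|_{L^\infty}\big)^{\frac{1}{1+\kappa}}\big(\|g\|_{L^\infty}\|\nabla h\|_{C^\kappa}\big)^{\frac{\kappa}{1+\kappa}}\le\|\nabla g\|_{C^\kappa}\|h\|_{L^\infty}+\|g\|_{L^\infty}\|\nabla h\|_{C^\kappa}.
\]
This is precisely the tame seminorm estimate the paper quotes. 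Each endpoint product carries weight $t^{-\frac{2-\alpha}{2}-\frac{1-\alpha}{2}}=t^{-(\frac32-\alpha)}$. Multiplying by $t^{\frac32-\alpha}$ therefore gives a bound by $\|g\|_X\|h\|_X$ directly, with no further bookkeeping.

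So the cancellation does not come from any scaling of $X$. It comes from the tame estimate involving only the two endpoint pairings, whose $X$-weights add up exactly to the $Y$-weights.
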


Set
\[
\tilde v\coloneqq U^{1/N_0}+v,\qquad \tilde h\coloneqq H^{1/N_0}+h.
\]
For $\Phi=(\phi_u,\phi_b)\in\mathcal Y$ and $0<t'\leq t\leq \bar T$, let
\[
\mathbb S(t,t')\Phi=(W,Z)
\]
be the solution of
\begin{equation}\label{semi-group}
\begin{split}
\partial_tW-\Delta W+\mathbb P\div\!\Big(2\tilde v\odot W\Big)&=0,\\
\partial_tZ-\Delta Z+\div\!\Big(\tilde v Z+W \tilde h\Big)&=0,\\
(W,Z)\vert_{t=t'}=\big(\mathbb P\div \phi_u(t'),\,  \div \phi_b(t')\big)&.
\end{split}
\end{equation}
As before, we suppress the dependence of $\mathbb S(t,t')$ on the parameter
$1/N_0$.

\begin{proposition}\label{prop-semi}
For any $\Phi=(\phi_u,\phi_b)\in\mathcal Y$ and $0<t'\leq t\leq \bar T$, we have
\begin{equation*}
\begin{aligned}
\|W(t)\|_{L^\infty}+\|Z(t)\|_{L^\infty}
&+(t-t')^{\frac12}\Big(\|\nabla W(t)\|_{C^\kappa}+\|\nabla Z(t)\|_{C^\kappa}\Big)\\
&\lesssim_{\bar C} t^{-\frac12}(t')^{-1+\alpha}(t/t')^{\epsilon}\|\Phi\|_{\mathcal Y}.
\end{aligned}
\end{equation*}
\end{proposition}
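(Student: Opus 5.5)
The system \eqref{semi-group} is triangular. The $W$-equation is the linearized Navier--Stokes equation around $\tilde v$ and does not involve $Z$. The $Z$-equation is a linear advection--diffusion equation driven by the source $-\div(W\tilde h)$. So I will first prove the bound for $W$, exactly as in the velocity semigroup estimate of \cites{Dai2026,CDP}, and then treat $Z$ by Duhamel's formula with $W$ as a known forcing.

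\textbf{Step 1: the $W$-bound.} Write $W$ in mild form,
\[
W(t)=e^{(t-t')\Delta}\mathbb P\div\phi_u(t')-\int_{t'}^t e^{(t-s)\Delta}\mathbb P\div\big(2\tilde v\odot W\big)(s)\,ds .
\]
For the linear term, the smoothing estimate $\|\nabla^\ell e^{\tau\Delta}\mathbb P\div g\|_{C^\kappa}\lesssim \tau^{-\frac{1+\ell+\kappa}{2}}\|g\|_{L^\infty}$ together with $\|\phi_u(t')\|_{L^\infty}\le (t')^{-1+\alpha}\|\Phi\|_{\mathcal Y}$ handles the regime $t\ge 2t'$. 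For $t\le 2t'$ I instead put the divergence on $\phi_u$ and use the $\nabla\phi_u$ part of the $Y$-norm. For the Duhamel term, \eqref{v-pointwise-bounds} gives $\|\tilde v(s)\|_{L^\infty}\lesssim C_{U,H}+s^{-1/2}$. The only danger is then the critical factor $s^{-1/2}$. It is controlled by the log-sublinear bound \eqref{v-critical-bounds}: the $L^1(t^{-1/2}dt;L^\infty)$ norm over $[t',t]$ is $\lesssim 1+(\log A)^{-1}\log(t/t')$. A Gronwall argument in the weighted quantity $\sup_s s^{1/2}(t')^{1-\alpha}\|W(s)\|_{L^\infty}$ then yields a loss of at most $(t/t')^{C/\log A}$, which is $\le (t/t')^\epsilon$ once $A$ is large. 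The gradient $C^\kappa$ bound follows by one more application of the heat smoothing on the Duhamel integral, which produces the factor $(t-t')^{-1/2}$. The bounded background $U^{1/N_0}$ costs only a constant depending on $\bar C$, since $t\le\bar T\le\bar C$.

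\textbf{Step 2: the $Z$-bound.} Write
\[
Z(t)=e^{(t-t')\Delta}\div\phi_b(t')-\int_{t'}^t e^{(t-s)\Delta}\div\big(\tilde v Z+W\tilde h\big)(s)\,ds .
\]
The first term and the $\tilde vZ$ term are handled exactly as in Step 1, now using \eqref{h-critical-bounds} and \eqref{v-critical-bounds}. The new term is the source $W\tilde h$. By \eqref{v-pointwise-bounds}, $\|\tilde h(s)\|_{L^\infty}\lesssim 1+s^{-1/2}$. Inserting the Step 1 bound $\|W(s)\|_{L^\infty}\lesssim s^{-1/2}(t')^{-1+\alpha}(s/t')^\epsilon\|\Phi\|_{\mathcal Y}$ gives
\[
\int_{t'}^t (t-s)^{-1/2}\, s^{-1}(t')^{-1+\alpha}(s/t')^{\epsilon}\,ds .
\]
I will split this integral at $s=t/2$. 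On $[t/2,t]$ it is bounded by $t^{-1/2}(t')^{-1+\alpha}(t/t')^\epsilon$. On $[t',t/2]$ it gives $t^{-1/2}\log(t/t')\,(t')^{-1+\alpha}(t/t')^\epsilon$.

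\textbf{Main obstacle.} The logarithmic loss on $[t',t/2]$ is the crux, since the source $W\tilde h$ has exactly critical scaling. My first attempt is to absorb it into $(t/t')^{2\epsilon}$ and rename $\epsilon$. If this loses too much, the fallback is to run a joint Gronwall on $(W,Z)$ using the critical bounds for both $v$ and $h$. In that version the $W\tilde h$ term is treated like $\tilde vZ$, so its contribution is governed by \eqref{h-critical-bounds}. This gives a constant times $(\log A)^{-1}\log(t/t')$, which exponentiates to $(t/t')^{C/\log A}$. Finally, adding the $W$ and $Z$ bounds and their gradient versions gives the stated estimate.
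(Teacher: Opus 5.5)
Your argument is correct. Your fallback is the paper's proof, but your primary route is genuinely different.

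\textbf{The paper's route.} It does not use the triangular structure of \eqref{semi-group}. It sets $\mathcal H(t)=t^{1/2}\big(\|W(t)\|_{L^\infty}+\|Z(t)\|_{L^\infty}\big)$ and runs one fractional Gr\"onwall argument with coefficient $C_{U,H}+\|v(s)\|_{L^\infty}+\|h(s)\|_{L^\infty}$. The coupling $W\tilde h$ is therefore a Gr\"onwall term, controlled through \eqref{h-critical-bounds} together with \eqref{v-critical-bounds}. This is exactly your fallback.

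\textbf{Your primary route.} You take the $W$-bound verbatim from the Navier--Stokes linearized estimate. You then insert it into the Duhamel formula for $Z$ as a known source. This has two consequences.
\begin{itemize}
\item The scalar's log-sublinear bound \eqref{h-critical-bounds} is never needed. The Gr\"onwall step for $Z$ involves only the advecting field $\tilde v$, so citing \eqref{h-critical-bounds} for the $\tilde vZ$ term is superfluous. Only the pointwise bound $\|h(s)\|_{L^\infty}\lesssim s^{-1/2}$ from \eqref{v-pointwise-bounds} enters, through the source.
\item The price is the factor $\log(t/t')$ from $[t',t/2]$. This is harmless, so your ``main obstacle'' is not one: $\log(t/t')\,(t/t')^{\epsilon/2}\lesssim_\epsilon (t/t')^{\epsilon}$.
\end{itemize}

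\textbf{One point on the order of choices.} Do the absorption with the target exponent $\epsilon$ fixed first. Then choose $A$ so that the Gr\"onwall exponent satisfies $C/\log A\le \epsilon/2$. If you absorbed the logarithm using the exponent $C/\log A$ itself, for example via $\int_{t'}^{t/2}s^{-1}(s/t')^{\epsilon'}\,ds\le (\epsilon')^{-1}(t/t')^{\epsilon'}$ with $\epsilon'=C/\log A$, the constant would grow like $\log A$. That would be incompatible with the fixed-point argument, where $A$ is chosen after the linear constant $C_{\bar C,U,H}$; this is why $\epsilon\in(0,\alpha)$ is fixed there first.

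\textbf{Comparison.} Your decoupled approach makes clear that the passive scalar adds no new Gr\"onwall difficulty: it is forced by an already-controlled field. The paper's joint Gr\"onwall is shorter and symmetric and avoids the logarithm, but it relies on the critical integral bound for $h$.
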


\begin{proof}
The proof is a direct adaptation of the corresponding semigroup estimates in
\cites{Dai2026,CDP}. Define
\[
\mathcal H(t)\coloneqq t^{1/2}\big(\|W(t)\|_{L^\infty}+\|Z(t)\|_{L^\infty}\big).
\]
Applying Duhamel to \eqref{semi-group}, splitting $[t',t]$ exactly as in those
references, and using the heat kernel bound, we obtain
\[
\mathcal H(t)\lesssim (t')^{-1+\alpha}\|\Phi\|_{\mathcal Y}
+\int_{t'}^t\Big(s^{-1/2}+(t-s)^{-1/2}\Big)
\Big(C_{U,H}+\|v(s)\|_{L^\infty}+\|h(s)\|_{L^\infty}\Big)\mathcal H(s)\,ds,
\]
The only extra contribution relative to \cite{CDP} is the pair $(Z,\tilde h)$,
which is handled exactly as the second component in \cite{Dai2026}, since
\[
\|\tilde v(s)\|_{L^\infty}+\|\tilde h(s)\|_{L^\infty}\lesssim C_{U,H}+\|v(s)\|_{L^\infty}+\|h(s)\|_{L^\infty},
\]
Now \eqref{v-critical-bounds} and \eqref{h-critical-bounds} provide the same
logarithmic control used there, and fractional Gr\"onwall yields
\[
\|W(t)\|_{L^\infty}+\|Z(t)\|_{L^\infty}\lesssim t^{-1/2}(t')^{-1+\alpha}(t/t')^{\epsilon/2}\|\Phi\|_{\mathcal Y}.
\]
Finally, differentiating \eqref{semi-group} and repeating the argument with
\begin{equation}\label{heat}
\|e^{t\Delta}\nabla^m \mathbb P g\|_{C^{r}}\lesssim t^{-\frac{m+r-s}2}\| g\|_{C^{s}},
\end{equation}
gives
\[
(t-t')^{1/2}\big(\|\nabla W(t)\|_{C^\kappa}+\|\nabla Z(t)\|_{C^\kappa}\big)\lesssim t^{-1/2}(t')^{-1+\alpha}(t/t')^{\epsilon}\|\Phi\|_{\mathcal Y},
\]
after enlarging the implicit constant and replacing $\epsilon/2$ by $\epsilon$.
\end{proof}


\subsection{Fixed point argument}

We now carry out the fixed point argument that constructs the corrector. As in
\cites{Dai2026,CDP}, the key input is the combination of the semigroup estimate
from Proposition~\ref{prop-semi} with the quadratic structure of the corrector
map.
\begin{proposition}\label{w-exists-fixed-point-proposition}
There exists $\varepsilon>0$ such that, for every $\delta\in(0,\varepsilon)$
and all sufficiently large $A$, one can find $w,\zeta\in B_X(0,\delta)$ with
\[
u^{1/N_0}=U^{1/N_0}+v+w,\qquad b^{1/N_0}=H^{1/N_0}+h+\zeta
\]
solving \eqref{eq-main} on $[0,\bar T]\times\mathbb R^2$. Moreover, if $\bar C$
and $N_0$ are chosen sufficiently large depending on $(U,H)$ and $T$, then the
solution $(u^{1/N_0},b^{1/N_0})$ extends to a classical solution on $[0,T]$.
\end{proposition}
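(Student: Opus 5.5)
We recast the corrector equations in mild form via the linearized propagator $\mathbb S(t,t')$ of \eqref{semi-group}, and solve the resulting quadratic problem by a contraction in a small ball of $\mathcal X$. For $(w,\zeta)\in\mathcal X$ define the forcing tensors
\[
\Phi_0(t)=\Big(-f_u-2U^{1/N_0}\odot v,\;-f_b-U^{1/N_0}h-vH^{1/N_0}\Big),\qquad \Phi_1(w,\zeta)=\big(-w\otimes w,\;-w\zeta\big),
\]
and the map
\[
\mathcal T(w,\zeta)(t)=\int_0^t \mathbb S(t,t')\big(\Phi_0+\Phi_1(w,\zeta)\big)(t')\,dt'.
\]
A fixed point of $\mathcal T$ solves the corrector system with zero data, since the linear terms $2\tilde v\odot w$ and $\tilde v\zeta+w\tilde h$ are built into $\mathbb S$. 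The whole argument then reduces to two estimates: a linear bound for the time integral of $\mathbb S$, and smallness of the source terms.

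\textbf{Step 1 (linear bound).} From Proposition~\ref{prop-semi}, for $\Phi\in\mathcal Y$ we have
\[
\Big\|\int_0^t\mathbb S(t,t')\Phi(t')\,dt'\Big\|_{L^\infty}\lesssim t^{-1/2}\int_0^t (t')^{-1+\alpha}(t/t')^{\epsilon}\,dt'\,\|\Phi\|_{\mathcal Y}\lesssim t^{-\frac{1-\alpha}{2}}\cdot t^{\frac{\alpha}{2}}\|\Phi\|_{\mathcal Y}.
\]
The $t'$-integral converges once $\epsilon<\alpha$. The extra factor $t^{\alpha/2}$ is harmless because $t\le\bar T\le\bar C$, though we choose the weights so that this remains consistent with the definition of $X$. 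The gradient part is handled the same way, using the factor $(t-t')^{-1/2}$; it is integrable near $t'=t$. This yields
\[
\|\mathcal T\text{-type integral}\|_{\mathcal X}\lesssim_{\bar C}\|\Phi\|_{\mathcal Y}.
\]

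\textbf{Step 2 (quadratic bounds).} By Lemma~\ref{product_X_Y_lemma},
\[
\|\Phi_1(w,\zeta)\|_{\mathcal Y}\lesssim\|(w,\zeta)\|_{\mathcal X}^2,
\]
and the difference bound is bilinear in the same way.

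\textbf{Step 3 (smallness of $\Phi_0$).} By \eqref{f_bound}, $f_u$ and $f_b$ have $\mathcal Y$-norm $\lesssim\varepsilon_0$; here we use $\kappa<\alpha$ and the weights $t^{1-\alpha}$, $t^{3/2-\alpha}$. For the cross terms, $U^{1/N_0}(x,t)=N_0^{-1}U(x/N_0,t/N_0^2)$, so every derivative of it is $O(N_0^{-1}C_{U,H})$, and the same holds for $H^{1/N_0}$. Combined with \eqref{v-pointwise-bounds} this gives $\|U^{1/N_0}\odot v\|_{\mathcal Y}\lesssim N_0^{-1}C_{U,H}\bar C^{O(1)}$. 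Choosing $\varepsilon_0$ small and $N_0$ large relative to $\bar C$ and $C_{U,H}$ makes $\|\Phi_0\|_{\mathcal Y}\le c\delta$. It follows that $\mathcal T$ maps $B_{\mathcal X}(0,\delta)$ into itself and is a contraction for $\delta<\varepsilon$. The Banach fixed point theorem then gives $(w,\zeta)$, and parabolic regularity upgrades the mild solution to a smooth solution on $(0,\bar T]$.

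\textbf{Extension to $[0,T]$.} If $N_0^2(T-T_*)\le\bar C$, then $\bar T$ already covers the whole interval. Otherwise, at time $\bar C$ we use \eqref{v-pointwise-bounds} and the bound $w,\zeta\in X$: both $v+w$ and $h+\zeta$ have size $\lesssim\bar C^{-1/2}$, with derivatives of size $\bar C^{-1}$. Scaling back by $N_0$, the solution at time $N_0^{-2}\bar C$ is a perturbation of $(U,H)$ of size $O(\bar C^{-1/2})$ in $W^{1,\infty}$, up to the scaling factors. We then invoke classical local well-posedness and stability of \eqref{eq-main} near the smooth solution $(U,H)$. This requires a stability estimate for small perturbations over the fixed horizon $[0,T]$, with constants depending on $C_{U,H}$ and $T$, and it is here that $\bar C$ must be chosen large enough in terms of $(U,H)$ and $T$.

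\textbf{Main obstacle.} I expect the hardest part to be Step 1, specifically matching the time weights so that the integral of the propagator lands back in $X$ near $t'\to0$ without losing powers. This depends on Proposition~\ref{prop-semi} carrying the factor $(t')^{-1+\alpha}$ with $\epsilon<\alpha$. A secondary difficulty is the global extension step. I would try it first by working with the unscaled fields and running a Grönwall argument on the difference from $(U,H)$.
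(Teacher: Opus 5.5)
Your construction of $(w,\zeta)$ on $[0,\bar T]$ is the paper's own argument. It uses the same Duhamel map through $\mathbb S(t,t')$, the bound $\|\mathcal F\|_{\mathcal X}\lesssim_{\bar C}\|\Psi\|_{\mathcal Y}$ from Proposition~\ref{prop-semi} (your evaluation of the $t'$-integral using $\epsilon<\alpha$ is exactly what is needed), Lemma~\ref{product_X_Y_lemma} for the quadratic terms, smallness of the sources via $\varepsilon_0$ and $N_0^{-1}$, and Banach's fixed point theorem. You part ways with the paper only in the ``Moreover'' clause, and there your second case does not work as written.

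\textbf{The gap.} The smallness you claim at the handoff time does not survive the rescaling. Under $V^{N_0}(x,t)=N_0V(N_0x,N_0^2t)$, an $L^\infty$ bound picks up a factor $N_0$ and a gradient bound picks up a factor $N_0^2$. So your bounds at rescaled time $\bar C$, namely $\|(v+w)(\bar C)\|_{L^\infty}\lesssim\bar C^{-1/2}$ and $\|\nabla(v+w)(\bar C)\|_{L^\infty}\lesssim\bar C^{-1}$, only give a perturbation of $(U,H)$ at original time $\bar C N_0^{-2}$ of size $N_0\bar C^{-1/2}$ in $L^\infty$ and $N_0^2\bar C^{-1}$ for the gradient. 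In precisely the case where you need this, $\bar C<N_0^2(T-T_*)$, these exceed $(T-T_*)^{-1/2}$ and $(T-T_*)^{-1}$, so they are not small. Enlarging $\bar C$ does not rescue it, because your contraction forces $N_0$ to be chosen after $\bar C$ (large relative to $C_{\bar C,U,H}$). Hence a small-perturbation stability theorem around $(U,H)$ in $W^{1,\infty}$ cannot be invoked, and ``$\bar C$ large in terms of $(U,H)$ and $T$'' is not a working mechanism.

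\textbf{How to close it.} In this 2D setting no smallness is needed. At rescaled time $\bar C$ the state is already classical, since you have smoothness on $(0,\bar T]$. Smooth solutions of the 2D Navier--Stokes equations are global, and the tracer equation is linear in $b$ with a smooth drift. The classical solution therefore continues uniquely up to original time $T-T_*$. If you want a Gr\"onwall argument on $q=u-U$ as you suggest, do it in $L^2$: the cubic term drops out, so $\frac{d}{dt}\|q\|_{L^2}^2\le 2\|\nabla U\|_{L^\infty}\|q\|_{L^2}^2$ holds regardless of the size of $q$, and 2D regularity does the rest.

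\textbf{The paper's route.} The paper avoids continuation altogether by arranging $\bar T=N_0^2(T-T_*)$, i.e.\ running the contraction on the whole rescaled interval. Read literally, this requires $\bar C\ge N_0^2(T-T_*)$, so one must track how the constants in your Steps~1--3 depend on $\bar C$; the paper does not spell this out.
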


\begin{proof}
This is the same contraction argument as in \cites{Dai2026,CDP}, so we record
only the terms specific to the passive-scalar setting. For $(W,Z)\in\mathcal X$,
define
\begin{align*}
\Phi_u(W,Z)&\coloneqq W\otimes W+f_u+2U^{1/N_0}\odot v,\\
\Phi_b(W,Z)&\coloneqq W Z+f_b+U^{1/N_0} h+vH^{1/N_0},
\end{align*}
and set $\Psi(W,Z)\coloneqq (\Phi_u(W,Z),\Phi_b(W,Z))\in\mathcal Y$. We then define
\[
\mathcal F(W,Z)(t)\coloneqq-\int_0^t\mathbb S(t,t')\Psi(W,Z)\,dt'.
\]

Fix $\epsilon\in(0,\alpha)$. Proposition~\ref{prop-semi}, together with the
same time integration as in \cites{Dai2026,CDP}, gives
\[
\|\mathcal F(W,Z)\|_{\mathcal X}\lesssim_{\bar C}\|\Psi(W,Z)\|_{\mathcal Y}.
\]

By Lemma~\ref{product_X_Y_lemma}, \eqref{f_bound}, smoothness of $(U,H)$, and \eqref{v-pointwise-bounds},
\[
\|W\otimes W\|_Y+\|W Z\|_Y\lesssim \|(W,Z)\|_{\mathcal X}^2,
\qquad
\|f_u\|_Y+\|f_b\|_Y\lesssim \varepsilon_0,
\]
\[
\|U^{1/N_0}\|_{L_t^\infty C_x^{1,\kappa}}+\|H^{1/N_0}\|_{L_t^\infty C_x^{1,\kappa}}\lesssim N_0^{-1}C_{U,H},
\]
\[
\|U^{1/N_0}\odot v\|_Y+\|U^{1/N_0} h\|_Y+\|v H^{1/N_0}\|_Y
\lesssim_{\bar C,U,H} N_0^{-1}.
\]
Hence the only terms not already present in the companion arguments are handled
by the same product bound together with the same smallness of the background
interactions. Therefore
\[
\|\Psi(W,Z)\|_{\mathcal Y}\le C_{\bar C,U,H}\Big(\|(W,Z)\|_{\mathcal X}^2+\varepsilon_0+N_0^{-1}\Big),
\]
and consequently
\[
\|\mathcal F(W,Z)\|_{\mathcal X}\le C_{\bar C,U,H}\Big(\|(W,Z)\|_{\mathcal X}^2+\varepsilon_0+N_0^{-1}\Big).
\]

For $(W_i,Z_i)\in B_{\mathcal X}(0,\delta)$, $i=1,2$, expanding the quadratic
differences exactly as in \cites{Dai2026,CDP} yields
\[
\|\Psi(W_1,Z_1)-\Psi(W_2,Z_2)\|_{\mathcal Y}
\lesssim \delta\,\|(W_1-W_2,Z_1-Z_2)\|_{\mathcal X}.
\]
Applying the linear estimate once more,
\[
\|\mathcal F(W_1,Z_1)-\mathcal F(W_2,Z_2)\|_{\mathcal X}
\le C_{\bar C,U,H}\delta\,\|(W_1-W_2,Z_1-Z_2)\|_{\mathcal X}.
\]

Choose $\delta>0$ such that $C_{\bar C,U,H}\delta\le \frac12$. Then choose $A\gg1$ (hence $\varepsilon_0$ small through \eqref{f_bound}) and $N_0$ large so that
\[
C_{\bar C,U,H}\big(\delta^2+\varepsilon_0+N_0^{-1}\big)\le \delta.
\]
It follows that $\mathcal F$ is a contraction on $B_{\mathcal X}(0,\delta)$ and
therefore has a unique fixed point $(w,\zeta)\in B_{\mathcal X}(0,\delta)$.

By construction, $(w,\zeta)$ solves the coupled corrector system, and hence
$(u^{1/N_0},b^{1/N_0})$ solves \eqref{eq-main} on $[0,\bar T]$. Finally, choose
$\bar C$ and then $N_0$ large enough so that
\[
\bar T=\min\{N_0^2(T-T_*),\bar C\}=N_0^2(T-T_*).
\]
Rescaling back yields a classical solution on $[0,T]$.
\end{proof}

\section{Proof of Theorem \ref{main-thm}}\label{sec:proof}

We now verify that the solution produced in Section \ref{sec:corrector} satisfies each conclusion in Theorem \ref{main-thm}.

We begin with a general gluing statement ensuring that piecewise classical solutions remain weak solutions provided the two traces agree distributionally at the joining time.

\begin{lemma}\label{le-weak}
Let $u,b\in L^2(\mathbb{T}^2 \times [0,T])\cap L^\infty([0,T];H^s(\mathbb{T}^2))$ for some $s\in \mathbb{R}$ be such that $(u,b)|_{[0,T_*)}$ and $(u,b)|_{(T_*,T]}$ are classical solutions of \eqref{eq-main} on $[0,T_*)$ and $(T_*,T]$ respectively. If
\[
\lim_{t\to T_*^-} (u(t),b(t)) = \lim_{t\to T_*^+} (u(t),b(t)), \qquad \text{in} \quad \mathcal{D}'(\mathbb{T}^2),
\]
then $u$ is a weak solution of the Navier--Stokes equations on $[0,T]$.
\end{lemma}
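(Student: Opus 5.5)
The plan is to verify the weak formulation of Definition~\ref{def:weak_solutions} directly, by splitting the time integral at $T_*$ and integrating by parts on each classical piece. (The conclusion should read ``$(u,b)$ is a weak solution of \eqref{eq-main}''; we prove that.) Fix $(\varphi,\tilde\varphi)\in\mathcal D_T$. For small $\tau>0$, the integrals over $[0,T_*-\tau]$ and $[T_*+\tau,T]$ involve only classical solutions. There we multiply the equations by $\varphi$ and $\tilde\varphi$ and integrate by parts in $x$ and $t$. We use $\div\varphi=0$ to remove the pressure, $\div u=0$ to write $(u\cdot\nabla)b=\div(ub)$, and $\int (u\otimes u):\nabla\varphi=-\int u\cdot\nabla\varphi\cdot u$. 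This gives
\begin{equation*}
\int_{\mathbb T^2}u(T_*-\tau)\cdot\varphi(T_*-\tau)\,dx-\int_{\mathbb T^2}u_0\cdot\varphi(0)\,dx=\int_0^{T_*-\tau}\!\!\int_{\mathbb T^2}u\cdot(\partial_t\varphi+\Delta\varphi+u\cdot\nabla\varphi)\,dx\,dt,
\end{equation*}
and similarly on $[T_*+\tau,T]$, where the boundary term at $T$ vanishes because $\varphi(T)=0$. The same identities hold for $b$ with $\tilde\varphi$.

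Next we add the two pieces and let $\tau\to0$. The bulk integrals converge to the integrals over $[0,T]$ by dominated convergence. For this we need the integrands $u\cdot\partial_t\varphi$, $u\cdot\Delta\varphi$, $b\,\partial_t\tilde\varphi$, $b\,\Delta\tilde\varphi$, which are controlled by $u,b\in L^2_{t,x}$, and the nonlinear terms $u\cdot(u\cdot\nabla\varphi)$ and $b\,(u\cdot\nabla\tilde\varphi)$, which are controlled by $|u|^2+|u||b|\in L^1_{t,x}$ since $u,b\in L^2(\mathbb T^2\times[0,T])$. It remains to show that the boundary terms
\begin{equation*}
\int_{\mathbb T^2}u(T_*-\tau)\cdot\varphi(T_*-\tau)\,dx-\int_{\mathbb T^2}u(T_*+\tau)\cdot\varphi(T_*+\tau)\,dx
\end{equation*}
and their $b$ analogues tend to zero.

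This cancellation is the main obstacle, because the test function is evaluated at a moving time. We split
\[
\langle u(T_*\pm\tau),\varphi(T_*\pm\tau)\rangle=\langle u(T_*\pm\tau),\varphi(T_*)\rangle+\langle u(T_*\pm\tau),\varphi(T_*\pm\tau)-\varphi(T_*)\rangle.
\]
The first term converges to the common distributional limit from either side, so the difference of the two first terms vanishes. For the second term we use the uniform bound $u\in L^\infty([0,T];H^s)$. Since $\varphi$ is smooth in space-time, $\|\varphi(T_*\pm\tau)-\varphi(T_*)\|_{H^{-s}}\lesssim\tau\|\partial_t\varphi\|_{L^\infty_tH^{-s}}\to0$, so the pairing is bounded by $\|u\|_{L^\infty H^s}\cdot O(\tau)\to0$. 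A minor point is that $L^\infty_t$ is an essential supremum. We therefore take $\tau\to0$ along a sequence of times at which the $H^s$ bound holds; this is permissible because on each open interval $u$ is continuous in time, so the bound in fact holds at every time there. The same argument applies to $b$. The resulting identities are exactly those in Definition~\ref{def:weak_solutions}. Finally, $u(t)$ is divergence free for every $t\neq T_*$ by classicality, hence weakly divergence free for a.e.\ $t$, which completes the verification.
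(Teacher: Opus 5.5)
Your proof is correct. The paper states Lemma~\ref{le-weak} without proof, carrying it over from the corresponding gluing lemma in \cite{CDP}. Your argument is the standard verification that the lemma's hypotheses are set up for: you split at $T_*\pm\tau$, pass to the limit in the bulk terms (including the nonlinear ones) using only $u,b\in L^2_{t,x}$, and dispose of the moving boundary pairings with the $L^\infty_tH^s$ bound plus the matching one-sided distributional limits. You are also right that the conclusion should say that $(u,b)$ is a weak solution of \eqref{eq-main}, not only that $u$ solves Navier--Stokes.
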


Next we check the borderline space bounds claimed for the constructed pair.

\begin{proposition}\label{prop:borderline}
    With $(u,b)$ as defined in Section \ref{sec:corrector}, we have
    \[
    u,b\in L_t^{2,\infty}L_x^\infty\cap L_{t}^2L_x^p
    \]
    for all $p<\infty$. Furthermore, both $u$ and $b$ lie in the Koch--Tataru path space $X_{T-T_*}$ and are weak-* continuous in time into $BMO^{-1}$. 
 \end{proposition}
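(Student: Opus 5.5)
We decompose $u=U+v^{N_0}+w^{N_0}$ and $b=H+h^{N_0}+\zeta^{N_0}$ on $(T_*,T]$, with time shifted so that $T_*$ corresponds to $t=0$. We then treat the three pieces separately. On $[0,T_*]$ the solution is the smooth classical solution $(U,H)$, so it lies in every space in question. The background $(U,H)$ is bounded on the whole interval, hence harmless. For the principal part, \eqref{v-pointwise-bounds} gives $\|v^{N_0}(t)\|_{L^\infty}+\|h^{N_0}(t)\|_{L^\infty}\lesssim t^{-1/2}$, since the bound $t^{-1/2}$ is scale invariant under the rescaling. Hence $t\mapsto \|v^{N_0}(t)\|_\infty$ lies in $L^{2,\infty}_t$. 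For the corrector, membership in $B_X(0,\delta)$ gives $\|w^{N_0}(t)\|_\infty\lesssim \delta N_0^{\alpha}t^{-(1-\alpha)/2}$ after rescaling, which is in $L^2_t$ and a fortiori in $L^{2,\infty}_t$. The same holds for $\zeta^{N_0}$. This yields $u,b\in L^{2,\infty}_tL^\infty_x$.

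For $L^2_tL^p_x$ with $p<\infty$, the corrector and the background are already in $L^2_tL^\infty_x$, so only $(v,h)$ needs care. We use \eqref{vk-pointwise-bounds} with $n=0$ and sum over $k$. The terms $(t/N_{1,k+1})^\alpha t^{-1/2}+N_{1,k+1}^{-\alpha}$ sum to $O(t^{-1/2+\alpha}+1)$, which is square integrable near $0$. The essential term is $\sum_k 2^{-k/p}t^{-1/2}$, and this must be localized in time. We therefore go back to \eqref{alt_vk_bound}, whose sharper form is $2^{-k/p}\sum_j N_{j,k}e^{-N_{j,k}^2t}$. Squaring and integrating in $t$ gives
\[
\int_0^\infty\Big(\sum_k 2^{-k/p}\sum_jN_{j,k}e^{-N_{j,k}^2t}\Big)^2dt .
\]
The scales are super-lacunary, so $\sum_k 2^{-k/p}\sum_jN_{j,k}e^{-N_{j,k}^2t}\lesssim \sup_{k,j}2^{-k/p}N_{j,k}e^{-N_{j,k}^2t/2}$, and cross terms are negligible. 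Each diagonal term integrates to $2^{-2k/p}\int N_{j,k}^2e^{-2N_{j,k}^2t}dt\sim 2^{-2k/p}$, and $\sum_k 2^{-2k/p}<\infty$. The same argument applies to $h$ via \eqref{alt_hk_bound}. Rescaling by $N_0$ preserves $L^2_tL^p_x$ up to a factor depending on $N_0$ and $p$, which is finite.

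For the Koch--Tataru space $X_{T-T_*}$, we must bound $\sup_t t^{1/2}\|u(t)\|_\infty$ together with the Carleson-type quantity $\sup_{x,R}R^{-2}\int_0^{R^2}\int_{B(x,R)}|u|^2$. The first follows from the $L^\infty$ bounds above. For the Carleson term, the $w,\zeta$ and $U,H$ pieces are trivially controlled because they are in $L^2_tL^\infty_x$ locally with the right scaling: $\int_0^{R^2}t^{-1+\alpha}dt\cdot R^2/R^2\to0$. For $(v,h)$ we use the support information $\supp\psi_{j,k}\subset\Omega_k$ together with \eqref{cube_intersection_volume_bound}. Fix a cube $Q$ of side $R\gtrsim M_{1,k_0}^{-1}$. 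Then $\int_Q|\bar v_k|^2\lesssim 2^{-(k-k_0)}|Q|\sum_jN_{j,k}^2e^{-2N_{j,k}^2t}$, and integrating over $t\in(0,R^2)$ and summing over $k$ gives $O(R^2)$. The discrepancy $v_k-\bar v_k$ is handled by Proposition \ref{difference_estimate_proposition}, which supplies the integrable factor $t^{-1/2+\alpha}$. Small cubes with $R\lesssim M_{1,k}^{-1}$ only see levels $k$ for which the trivial bound $t^{-1/2}$ on $(0,R^2)$ gives $\lesssim R^2$ per level, with only $O(1)$ such relevant levels by lacunarity. \textbf{This Carleson estimate is the main obstacle}: the naive bound $t^{-1/2}$ gives a logarithmic divergence, and the volume decay of $\Omega_k$ inside cubes is what must be exploited.

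Finally, weak-$*$ continuity into $BMO^{-1}$ on $(T_*,T]$ follows from smoothness. At $T_*$ we show $u(t)-U(t)\to0$ weak-$*$. The $BMO^{-1}$ norm is controlled by the $X$ norm uniformly, so it suffices to test against smooth functions. Write $v=\div R$, with $R$ bounded by Proposition \ref{difference_estimate_proposition}, and $\bar R$ supported on sets of measure $\lesssim 2^{-k}$ at frequency scale $N_{j,k}$. Then $\langle v^{N_0}(t),\varphi\rangle=-\langle R^{N_0},\nabla\varphi\rangle$ tends to $0$ as $t\to0^+$ by the oscillation $\sin(N_{j,k}\eta^j\cdot x)$ and dominated convergence. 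The same applies to $h$ via $H_k$, and the corrector terms vanish in $L^1_tL^\infty_x$. By Lemma \ref{le-weak}, this completes the proof.
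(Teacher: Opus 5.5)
Your $L^{2,\infty}_tL^\infty_x$ argument is the paper's. For $L^2_tL^p_x$ you are right to go back to the time-localized bound \eqref{alt_vk_bound}. The term $2^{-k/p}t^{-1/2}$ in \eqref{vk-pointwise-bounds} is not square integrable at $t=0$, whereas $\|N_{j,k}e^{-N_{j,k}^2t}\|_{L^2_t}\lesssim1$, so Minkowski in $L^2_t$ together with $\sum_k2^{-k/p}<\infty$ already finishes; your sup/lacunarity step is unnecessary.

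For $X_{T-T_*}$ you take a different route from the paper. The paper splits $\bar v=u_1+u_3$. It bounds the commutator piece $u_1$ by $t^{-1/2+\epsilon}+1$ via Lemma \ref{l:commutator}, and it treats $u_3=-e^{t\Delta}\sum_{j,k}N_{j,k}\Delta\psi_{j,k}$ by showing the datum lies in $BMO^{-1}$ and invoking the Koch--Tataru characterization of free heat flows in $X_T$. You instead bound the Carleson functional of $\bar v$ directly from $\supp\bar v_k\subset\Omega_k$, $|\bar v_k|\lesssim\sum_jN_{j,k}e^{-N_{j,k}^2t}$ and \eqref{cube_intersection_volume_bound}. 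This bypasses the commutator lemma and is valid, with two provisos for the levels $k<k_0$. First, use the per-level bound $R^2\min(1,N_{J,k}^2R^2)$ coming from the exponential factor, not $t^{-1/2}$, which diverges as you note. Second, observe via \eqref{N_and_M_ordering} that $N_{J,k}R\ll1$ for $k\le k_0-2$, so those levels sum geometrically. The price is that your route gives no structure for $u(t)$ as $t\to0^+$, whereas the paper's free-heat-flow piece is what it uses for weak-$*$ continuity.

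Your separate weak-$*$ argument at $T_*$ has a genuine gap. Oscillation is not what makes $\langle v^{N_0}(t),\varphi\rangle\to0$. For fixed $k$, $\bar R_k(t)\to\bar R_k(0)=-\sum_jN_{j,k}\newD\psi_{j,k}\neq0$, so $\langle\bar R_k(t),\nabla\varphi\rangle$ tends to a small but nonzero number; indeed $\bar v(t)\to\bar v(0)\neq0$ in $\mathcal D'$. What vanishes at $t=0$ is each $R_k$ itself, because $v_k$ is a Duhamel integral from zero data. Note that $R_k$ and $\bar R_k$ differ at order one for $t\ll N_{1,k+1}^{-2}$.

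Proposition \ref{difference_estimate_proposition} also does not give a summable domination. Its $n=0$ bound contains $\mathbbm 1_{t\le t_k}N_{J,k}^{\varepsilon}$, and $\sum_{k:\,t\le t_k}N_{J,k}^\varepsilon\to\infty$ as $t\to0$; meanwhile $\|\bar R_k(t)\|_\infty\lesssim1$ is not summable in $k$. The missing step combines \eqref{tensor-RH} with volume decay. Since $e^{(t-s)\Delta}\mathcal R\mathbb P\div$ is bounded on $L^2$ and $|\Omega_{k+1}|\lesssim2^{-k}$,
\[
\|R_k(t)\|_{L^2}\lesssim\int_0^t\|\bar v_{k+1}(s)\|_{L^\infty}\|\bar v_{k+1}(s)\|_{L^2}\,ds\lesssim 2^{-k/2}\sum_{j}\min\big(1,N_{j,k+1}^2t\big).
\]
This tends to $0$ for each $k$ and is dominated by $2^{-k/2}$. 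Hence $R(t)\to0$ in $L^2$ and $\langle v(t),\varphi\rangle=-\langle R(t),\nabla\varphi\rangle\to0$; the same argument works for $H_k$ with $\bar v_{k+1}\bar h_{k+1}$.

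Two smaller points remain. For the corrector, $w\in L^1_tL^\infty_x$ alone does not give $\langle w(t),\varphi\rangle\to0$. Use the weak form of the corrector equation with $w(0)=0$; its quadratic, cross and forcing terms are $O(t^{-1+\alpha/2})$ in $L^\infty$ and hence time-integrable. Finally, reducing weak-$*$ convergence in $BMO^{-1}$ to smooth test functions requires $\sup_t\|u(t)\|_{BMO^{-1}}<\infty$. This does not follow from $u\in X_{T-T_*}$ for a general path and needs its own justification.
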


\begin{proof}
Recall that the solution constructed in Section~\ref{sec:corrector} has the form
\[
u=U+v^{N_0}+w^{N_0},\qquad b=H+h^{N_0}+\zeta^{N_0}.
\]
After rescaling, it is enough to work with
\[
u^{1/N_0}=U^{1/N_0}+v+w,\qquad b^{1/N_0}=H^{1/N_0}+h+\zeta.
\]

\textbf{Step 1:} $\mathbf{X_{T-T_*}}$ (hence weak-* continuity into $\mathbf{BMO^{-1}}$) for both $u,b$.
We use the implication
\begin{align}\label{X_T_criterion}
\sup_{t\in(0,T-T_*]}t^{\frac12-\epsilon}\|f(t)\|_{L^\infty}<\infty\Longrightarrow f\in X_{T-T_*}.
\end{align}
Write
\begin{align*}
u^{1/N_0}&=U^{1/N_0}+w+u_1+u_2+u_3,\\
b^{1/N_0}&=H^{1/N_0}+\zeta+b_1+b_2+b_3,
\end{align*}
with
\begin{align*}
u_1&=\sum_{j,k}\big(e^{t\Delta}N_{j,k}\Delta\psi_{j,k,u}+\bar v_{j,k}\big),&
u_2&=v-\bar v,&
u_3&=-e^{t\Delta}\sum_{j,k}N_{j,k}\Delta\psi_{j,k,u},\\
b_1&=\sum_{j,k}\big(e^{t\Delta}N_{j,k}\Delta\psi_{j,k,b}+\bar h_{j,k}\big),&
b_2&=h-\bar h,&
b_3&=-e^{t\Delta}\sum_{j,k}N_{j,k}\Delta\psi_{j,k,b}.
\end{align*}
Since $U$ and $H$ are smooth, they belong to $X_{T-T_*}$, and Proposition~\ref{w-exists-fixed-point-proposition} yields the same conclusion for $w$ and $\zeta$. For $(u_1,b_1)$, Lemma~\ref{l:commutator} gives
\[
\|u_1(t)\|_\infty+\|b_1(t)\|_\infty\lesssim t^{-1/2+\epsilon}+1.
\]
The same $t^{-1/2+\epsilon}$ bound for $(u_2,b_2)$ follows from Proposition~\ref{difference_estimate_proposition} and Proposition~\ref{difference_estimate_H}. For $(u_3,b_3)$, the usual BMO argument based on \eqref{eq:Dpsi_bounds}, \eqref{Omega_volume_estimate}, and \eqref{N_and_M_ordering} shows that $(u_3,b_3)\in X_{T-T_*}$. Consequently,
\[
u,b\in X_{T-T_*}.
\]
Each summand is therefore weak-* continuous in time with values in $BMO^{-1}$.

\textbf{Step 3:} $\mathbf{L_t^{2,\infty}L_x^\infty\cap L_t^2L_x^p}$ for both $u,b$.
The $X$-bounds imply
\[
\|U^{1/N_0}(t)\|_\infty+\|H^{1/N_0}(t)\|_\infty+\|w(t)\|_\infty+\|\zeta(t)\|_\infty\lesssim t^{-1/2+\epsilon}.
\]
On the other hand, \eqref{v-pointwise-bounds} with $n=0$ yields
\[
v,h\in L_t^{2,\infty}L_x^\infty.
\]
Hence $u^{1/N_0}$ and $b^{1/N_0}$ lie in $L_t^{2,\infty}L_x^\infty$. In addition, \eqref{vk-pointwise-bounds} gives, for each $k$ and every $p<\infty$,
\[
\|v_k\|_{L_t^2L_x^p}+\|h_k\|_{L_t^2L_x^p}\lesssim N_{1,k+1}^{-\alpha}+2^{-k/p}.
\]
Summing over $k$ shows that $v,h\in L_t^2L_x^p$, and therefore
\[
u,b\in L_t^{2,\infty}L_x^\infty\cap L_t^2L_x^p,\qquad \forall p<\infty.
\]
This proves the proposition.
\end{proof}

\textbf{Proof of Theorem \ref{main-thm}:}
Section~\ref{sec:corrector} produces a solution of the form
\[
u=U+v^{N_0}+w^{N_0},\qquad b=H+h^{N_0}+\zeta^{N_0},
\]
which solves \eqref{eq-main} classically away from the gluing time. The distributional matching argument from Lemma~\ref{le-weak} then shows that $(u,B)$ is a weak solution of \eqref{eq-main} with initial data
\[
(u_0,b_0)=\big(U(0,\cdot),H(0,\cdot)\big).
\]

For the upper Type-I estimate, smoothness of $(U,H)$ together with \eqref{v-pointwise-bounds} gives
\[
\|U(t)\|_{L^\infty}+\|H(t)\|_{L^\infty}+\|v^{N_0}(t)\|_{L^\infty}+\|h^{N_0}(t)\|_{L^\infty}\lesssim t^{-1/2}.
\]
Moreover, the fixed-point bound in $X$ yields
\[
\|w^{N_0}(t)\|_{L^\infty}+\|\zeta^{N_0}(t)\|_{L^\infty}\lesssim t^{-1/2+\alpha/2}\lesssim t^{-1/2},
\]
so
\[
\|u(t)\|_{L^\infty}+\|b(t)\|_{L^\infty}\lesssim t^{-1/2},\qquad 0<t<T.
\]

For the lower bound, the principal velocity profile furnishes a sequence $t_n\to0^+$ such that
\[
\|v(t_n)\|_{L^\infty}\sim e^{-1}t_n^{-1/2}.
\]
Therefore,
\[
\|u(t_n)\|_{L^\infty}
\ge \|v^{N_0}(t_n)\|_{L^\infty}-\|U(t_n)\|_{L^\infty}-\|w^{N_0}(t_n)\|_{L^\infty}
\gtrsim t_n^{-1/2}.
\]
This gives the claimed instantaneous blowup. The borderline-space assertions for both components follow from Proposition~\ref{prop:borderline}.

\section{A perturbative route for genuine 2D MHD}
\label{sec:perturbative-2dmhd}

In this section, we prove Theorem \ref{thm-2dmhd} for the 2D MHD using a perturbative approach based on the 2D Navier--Stokes blowup profile established in the work \cite{CDP} of the first author and collaborators. The next two propositions and lemma provide a perturbative route. 

\begin{proposition}[Induction semigroup estimate around a 2D NSE background]
\label{prop:semi-2dmhd-sketch}
Fix $\bar T>0$, and let $X$, $Y$, and $\mathcal Y$ be the spaces introduced
later in Section~\ref{sec:corrector}, with parameters $\alpha,\kappa$ as in
\eqref{def-X}--\eqref{def-Y}. Let $u^{NS}$ be a divergence-free vector field on
$[0,\bar T]\times\mathbb T^2$ such that
\[
\|u^{NS}\|_X\leq C_{NS},
\]
and assume moreover that, for every $0<t'\leq t\leq \bar T$,
\[
\|u^{NS}\|_{L^1([t',t],s^{-1/2}ds;L^\infty(\mathbb T^2))}
\leq C_{NS}\big(1+\log(t/t')\big).
\]
For $0<t'\leq t\leq \bar T$ and spatial tensor fields
$f_u,f_B:\mathbb T^2\to\mathbb R^{2\times 2}$, let
\[
\mathbb M_{u^{NS}}(t,t')(f_u,f_B)=(W,G)
\]
solve
\begin{equation}\label{eq:semi-2dmhd-sketch}
\begin{split}
\partial_t W-\Delta W+\mathbb P\div(2u^{NS}\odot W)&=0,\\
\partial_t G-\Delta G+\div(u^{NS}\otimes G-G\otimes u^{NS})&=0,\\
(W,G)|_{t=t'}&=(\mathbb P\div f_u,\, \div f_B).
\end{split}
\end{equation}
Then there exists $\epsilon=\epsilon(C_{NS},\alpha,\kappa)>0$ such that, for
all $\Phi=(\phi_u,\phi_B)\in\mathcal Y$, the pair
\[
(W,G)=\mathbb M_{u^{NS}}(t,t')(\phi_u(t'),\phi_B(t'))
\]
satisfies
\[
\begin{aligned}
\|W(t)\|_{L^\infty}+\|G(t)\|_{L^\infty}
&+(t-t')^{\frac12}\Big(\|\nabla W(t)\|_{C^\kappa}+\|\nabla G(t)\|_{C^\kappa}\Big)\\
&\lesssim_{C_{NS},\bar T} t^{-1/2}(t')^{-1+\alpha}(t/t')^\epsilon
\|\Phi\|_{\mathcal Y}.
\end{aligned}
\]
\end{proposition}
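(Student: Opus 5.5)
The plan is to follow the proof of Proposition~\ref{prop-semi}. The system \eqref{eq:semi-2dmhd-sketch} is triangular: the $W$-equation is the linearized Navier--Stokes flow around $u^{NS}$ and does not involve $G$, while the $G$-equation is linear in $G$ with coefficient $u^{NS}$ and has no forcing from $W$. The two components can therefore be estimated separately with the same argument. The only structural inputs are the heat kernel bounds \eqref{heat}, the $L^\infty$ control of the coefficient coming from $\|u^{NS}\|_X\le C_{NS}$, and the logarithmic weighted-$L^1_t$ control of $\|u^{NS}\|_{L^\infty}$. The latter plays the role that \eqref{v-critical-bounds} played before.

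\textbf{Step 1 ($L^\infty$ bounds).} Write the Duhamel formulas
\[
W(t)=e^{(t-t')\Delta}\mathbb P\div\phi_u(t')-\int_{t'}^te^{(t-s)\Delta}\mathbb P\div(2u^{NS}\odot W)(s)\,ds,
\]
and the analogue for $G$ with $\div(u^{NS}\otimes G-G\otimes u^{NS})$. For the linear term, use \eqref{heat} with $\|\phi(t')\|_{L^\infty}\le (t')^{-1+\alpha}\|\Phi\|_{\mathcal Y}$ when $t\ge 2t'$. When $t\le 2t'$, put the derivative on $\phi$ instead and use $\|\nabla\phi(t')\|_{C^\kappa}\le (t')^{-3/2+\alpha}\|\Phi\|_{\mathcal Y}$. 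In both cases this gives $\lesssim t^{-1/2}(t')^{-1+\alpha}\|\Phi\|_{\mathcal Y}$.

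For the integral term, split $[t',t]$ at $t/2$. On $[t'\vee t/2,t]$ use $(t-s)^{-1/2}$ from the heat kernel. On $[t',t/2]$ use $t^{-1/2}\lesssim s^{-1/2}$. Setting $\mathcal H(t)=t^{1/2}(\|W(t)\|_\infty+\|G(t)\|_\infty)$, this yields
\[
\mathcal H(t)\lesssim (t')^{-1+\alpha}\|\Phi\|_{\mathcal Y}+\int_{t'}^t\big(s^{-1/2}+t^{1/2}s^{-1/2}(t-s)^{-1/2}\big)\|u^{NS}(s)\|_{L^\infty}\mathcal H(s)\,ds .
\]

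\textbf{Step 2 (Gr\"onwall; the main obstacle).} The kernel $\|u^{NS}(s)\|_\infty s^{-1/2}$ is only logarithmically integrable, so a plain Gr\"onwall argument would give a power loss $(t/t')^{CC_{NS}}$. This loss is harmless only if the exponent is $\epsilon$. Here I would copy the mechanism of \cite{CDP}. On short dyadic-in-log windows where $\int s^{-1/2}\|u^{NS}\|_\infty$ is small, absorb the integral term by a fractional Gr\"onwall/bootstrapping argument. The singular factor $(t-s)^{-1/2}$ is handled by $\|u^{NS}(s)\|_\infty\le C_{NS}s^{-1/2+\alpha/2}$ near $s=t$. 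The number of windows needed to cover $[t',t]$ is $\sim 1+\log(t/t')$ divided by the window length. Each window costs a constant factor, so one gets $(t/t')^{\epsilon}$ with $\epsilon$ depending on $C_{NS},\alpha,\kappa$.

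I expect this step to be the delicate one. Unlike in Proposition~\ref{prop-semi}, the hypothesis gives constant $C_{NS}$ rather than $(\log A)^{-1}$ in front of the logarithm. If $C_{NS}$ is large, the per-window constants must be tracked carefully. Failing that, I would accept $\epsilon$ depending on $C_{NS}$, which is what the statement permits.

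\textbf{Step 3 (gradient bounds).} Differentiate the Duhamel formula and apply \eqref{heat} with $m=1$, $r=\kappa$. The integrand is bounded using the $L^\infty$ estimate from Step 2 together with $\|\nabla u^{NS}\|_{C^\kappa}\le C_{NS}s^{-(2-\alpha)/2}$, via the product rule in $C^\kappa$. Near $s=t$ a second splitting is needed: move one derivative onto the product and gain $(t-s)^{-(1+\kappa)/2}$, which is integrable. The result is $(t-t')^{1/2}(\|\nabla W\|_{C^\kappa}+\|\nabla G\|_{C^\kappa})\lesssim t^{-1/2}(t')^{-1+\alpha}(t/t')^{\epsilon}\|\Phi\|_{\mathcal Y}$, after enlarging constants and possibly $\epsilon$.
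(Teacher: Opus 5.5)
Your proposal follows the paper's proof: Duhamel with the heat bounds \eqref{heat}, the same integral inequality for $\mathcal H(t)=t^{1/2}(\|W(t)\|_{L^\infty}+\|G(t)\|_{L^\infty})$, a fractional Gr\"onwall step giving $(t/t')^{\epsilon}$ with $\epsilon$ allowed to depend on $C_{NS}$, and then differentiating the Duhamel formula for the gradient part. Your Step 2 worry is in fact unnecessary here, because the hypothesis $\|u^{NS}\|_X\le C_{NS}$ gives $s^{-1/2}\|u^{NS}(s)\|_{L^\infty}\le C_{NS}s^{-1+\alpha/2}$, which is integrable on $(0,\bar T]$; the fractional Gr\"onwall step therefore closes with a constant depending only on $C_{NS},\alpha,\bar T$, with no $(t/t')$-loss and no window counting.
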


\begin{proof}
For fixed $0<t'\leq t\leq \bar T$ and $\Phi=(\phi_u,\phi_B)\in\mathcal Y$, set
\[
\Phi_{t'}\coloneqq (\phi_u(t'),\phi_B(t')),
\]
so that
\[
(W,G)=\mathbb M_{u^{NS}}(t,t')\Phi_{t'}
\]
solves \eqref{eq:semi-2dmhd-sketch} with initial data
\[
(\mathbb P\div\phi_u(t'),\,\div\phi_B(t')).
\]
Define
\[
\mathcal H(t)\coloneqq t^{1/2}\big(\|W(t)\|_{L^\infty}+\|G(t)\|_{L^\infty}\big).
\]
Applying Duhamel to \eqref{eq:semi-2dmhd-sketch} and using the heat-kernel
bound exactly as in the proof of Proposition~\ref{prop-semi}, we obtain
\[
\begin{split}
\mathcal H(t)
&\lesssim (t')^{-1+\alpha}\|\Phi\|_{\mathcal Y}\\
&\quad+\int_{t'}^t\Big(s^{-1/2}+(t-s)^{-1/2}\Big)
\|u^{NS}(s)\|_{L^\infty}\mathcal H(s)\,ds.
\end{split}
\]
The $X$-bound implies the pointwise estimate
\[
\|u^{NS}(s)\|_{L^\infty}\lesssim C_{NS}s^{-\frac{1-\alpha}{2}},
\]
while the additional weighted $L^1_tL^\infty_x$ hypothesis provides exactly
the logarithmic control used in the proof of Proposition~\ref{prop-semi}.
Therefore the same fractional Gr\"onwall argument as in
Proposition~\ref{prop-semi} yields
\[
\|W(t)\|_{L^\infty}+\|G(t)\|_{L^\infty}
\lesssim_{C_{NS},\bar T}
t^{-1/2}(t')^{-1+\alpha}(t/t')^{\epsilon/2}\|\Phi\|_{\mathcal Y}
\]
for some $\epsilon=\epsilon(C_{NS})>0$.

To estimate derivatives, differentiate the Duhamel formula and use the heat
estimate
\[
\|e^{(t-s)\Delta}\nabla^m \mathbb P g\|_{C^r}
\lesssim (t-s)^{-\frac{m+r-\sigma}{2}}\|g\|_{C^\sigma}.
\]
Since the coefficients in \eqref{eq:semi-2dmhd-sketch} are linear combinations
of terms of the form $u^{NS}\otimes(\cdot)$ and $(\cdot)\otimes u^{NS}$, the
$X$-bound on $u^{NS}$ supplies the required Hölder control in the product
estimate, and the same argument gives
\[
(t-t')^{\frac12}\big(\|\nabla W(t)\|_{C^\kappa}+\|\nabla G(t)\|_{C^\kappa}\big)
\lesssim_{C_{NS},\bar T}
t^{-1/2}(t')^{-1+\alpha}(t/t')^\epsilon\|\Phi\|_{\mathcal Y},
\]
after enlarging the implicit constant and replacing $\epsilon/2$ by
$\epsilon$.
\end{proof}

For the magnetic initial data, define
\[
X_0\coloneqq \Big\{H_0\in \mathcal D'(\mathbb T^2;\mathbb R^2): \div H_0=0,\
\|H_0\|_{X_0}<\infty\Big\},
\]
where
\begin{equation}\label{def-X0-2dmhd}
\|H_0\|_{X_0}\coloneqq \sup_{0<t\leq \bar T}\Big(
t^{\frac{1-\alpha}2}\|e^{t\Delta}H_0\|_{L^\infty}
+t^{\frac{2-\alpha}2}\|\nabla e^{t\Delta}H_0\|_{C^\kappa}
\Big).
\end{equation}
\begin{remark}[A concrete model class for $X_0$]
The norm \eqref{def-X0-2dmhd} is a heat-semigroup realization of the negative
H\"older/Besov scale $B_{\infty,\infty}^{\alpha-1}(\mathbb T^2)$. In
particular, the standard heat-flow characterization gives
\[
\|H_0\|_{X_0}\lesssim \|H_0\|_{B_{\infty,\infty}^{\alpha-1}}
\]
for every divergence-free $H_0\in B_{\infty,\infty}^{\alpha-1}(\mathbb T^2;
\mathbb R^2)$. A particularly concrete subclass is obtained by taking
\[
H_0=\nabla^\perp \psi_0,\qquad \psi_0\in C^\alpha(\mathbb T^2),
\]
for which
\[
\|H_0\|_{X_0}\lesssim \|\psi_0\|_{C^\alpha}.
\]
Thus the smallness assumption in
Proposition~\ref{prop:2d-mhd-perturbative-sketch} may be read as a small
$C^\alpha$ condition on the initial magnetic stream function.
\end{remark}


\begin{proposition}[Linear induction estimate from the magnetic initial data]
\label{prop:Hlin-from-data-sketch}
Let $u^{NS}$ satisfy the hypotheses of Proposition~\ref{prop:semi-2dmhd-sketch},
and let $H_0\in X_0$. Then the system
\begin{equation}\label{eq:Hlin-linear}
\partial_t H^{lin}-\Delta H^{lin}
+\div(u^{NS}\otimes H^{lin}-H^{lin}\otimes u^{NS})=0,
\qquad H^{lin}|_{t=0}=H_0,
\end{equation}
has a unique solution $H^{lin}\in X$. Moreover,
\[
\|H^{lin}\|_X\leq C_0\|H_0\|_{X_0}
\]
for some constant $C_0=C_0(C_{NS},\alpha,\kappa,\bar T)>0$.
\end{proposition}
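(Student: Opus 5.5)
We will obtain $H^{lin}$ as the fixed point of a linear Duhamel map in $X$, on the time interval $[0,\bar T]$. We write \eqref{eq:Hlin-linear} in mild form:
\[
H^{lin}(t)=e^{t\Delta}H_0-\int_0^t e^{(t-s)\Delta}\div\big(u^{NS}\otimes H^{lin}-H^{lin}\otimes u^{NS}\big)(s)\,ds
\eqcolon e^{t\Delta}H_0+\mathcal L(H^{lin})(t).
\]
By definition \eqref{def-X0-2dmhd}, the free part satisfies $\|e^{t\Delta}H_0\|_X=\|H_0\|_{X_0}$. Since $\div H_0=0$ and the nonlinear term is $\div$ of an antisymmetric tensor, $\div H^{lin}=0$ is propagated.

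\emph{Step 1: the forcing tensor lies in $Y$.} Set $\phi_B\coloneqq u^{NS}\otimes H-H\otimes u^{NS}$. Lemma~\ref{product_X_Y_lemma} gives $\|\phi_B\|_Y\lesssim C_{NS}\|H\|_X$. This is linear in $H$ with a constant that is not small, so a naive contraction on the whole interval fails.

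\emph{Step 2: Duhamel with the variable-coefficient propagator.} Instead of contracting with $e^{t\Delta}$, we use Proposition~\ref{prop:semi-2dmhd-sketch}, whose $G$-component is exactly the propagator of \eqref{eq:Hlin-linear}. The difficulty is that the datum $H_0$ is not of the form $\div\phi_B(t')$ with $\phi_B\in Y$ at positive $t'$. We therefore split
\[
H^{lin}=e^{t\Delta}H_0+K.
\]
The remainder $K$ solves the $G$-equation of \eqref{eq:semi-2dmhd-sketch} with zero data and with forcing $-\div F$, where
\[
F\coloneqq u^{NS}\otimes e^{t\Delta}H_0-e^{t\Delta}H_0\otimes u^{NS}\in Y,\qquad \|F\|_Y\lesssim C_{NS}\|H_0\|_{X_0}.
\]
By Duhamel's principle for the linear equation,
\[
K(t)=-\int_0^t \mathbb M^{(2)}_{u^{NS}}(t,t')(0,F(t'))\,dt',
\]
where $\mathbb M^{(2)}$ denotes the second component. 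We then apply Proposition~\ref{prop:semi-2dmhd-sketch} and integrate in $t'$:
\[
\|K(t)\|_\infty\lesssim t^{-1/2}\int_0^t (t')^{-1+\alpha}(t/t')^{\epsilon}\,dt'\,\|F\|_Y\lesssim t^{-\frac{1-\alpha}{2}}\,t^{\alpha/2}\|F\|_Y,
\]
using $\epsilon<\alpha$. The gradient bound follows the same way, with the factor $(t-t')^{-1/2}$ integrable near $t'=t$. Since $t\le\bar T$, this gives $\|K\|_X\lesssim_{\bar T}\|F\|_Y$ and hence
\[
\|H^{lin}\|_X\le C_0\|H_0\|_{X_0}.
\]
This is the same time integration used in Proposition~\ref{w-exists-fixed-point-proposition}.

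\emph{Step 3: it is a solution, and it is unique.} The main obstacle is justifying the Duhamel representation through $\mathbb M$, which is only defined for $t'>0$ with singular coefficients. To handle it, we solve on $[\tau,\bar T]$ with smooth data $H^{lin}(\tau)$, which is classical since $u^{NS}$ is smooth for $t>0$. We let $\tau\to0$ using the uniform bounds of Step 2 and a compactness argument in $C^{1,\kappa'}_{loc}$. The solution attains $H_0$ at $t=0$ distributionally, because $K(t)\to0$ in $\mathcal D'$.

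For uniqueness, let $D$ be the difference of two solutions in $X$ with $D(0)=0$. Then $D=\mathcal L(D)$. On a short interval $[0,t_0]$, the map $\mathcal L$ has norm $\lesssim C_{NS}t_0^{\alpha/2}$, because of the weight $s^{-(1-\alpha)/2}$ on $u^{NS}$, which makes the integral bounds gain a factor $t^{\alpha/2}$. Hence $D=0$ on $[0,t_0]$. On $[t_0,\bar T]$ the coefficients are bounded, and standard uniqueness for the linear parabolic equation gives $D=0$ there as well.
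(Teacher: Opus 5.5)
Your argument is correct in substance, but it is organized differently from the paper's.

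\textbf{Comparison of routes.} The paper runs the fractional Gr\"onwall argument directly on the heat--Duhamel formula for $H^{lin}$. It sets $\mathcal K(t)=t^{\frac{1-\alpha}2}\|H^{lin}(t)\|_{L^\infty}+t^{\frac{2-\alpha}2}\|\nabla H^{lin}(t)\|_{C^\kappa}$ and derives $\mathcal K(t)\lesssim\|H_0\|_{X_0}+\int_0^t(s^{-1/2}+(t-s)^{-1/2})\|u^{NS}(s)\|_{L^\infty}\mathcal K(s)\,ds$. Existence comes from approximating $H_0$ by smooth data, and uniqueness from applying the same bound to a difference.

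You instead peel off $e^{t\Delta}H_0$, so that the variable-coefficient propagator only ever acts on $\div F$ with $F\in Y$. You then reuse Proposition~\ref{prop:semi-2dmhd-sketch} together with the $t'$-integration of Proposition~\ref{w-exists-fixed-point-proposition}. Your uniqueness comes from short-time smallness of the heat--Duhamel operator, plus classical uniqueness once the coefficients are bounded.

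Your route is modular: it needs no new Gr\"onwall argument, and it mirrors how $\mathbb M_{u^{NS}}$ is used in Proposition~\ref{prop:2d-mhd-perturbative-sketch}. The paper's route is self-contained and never has to integrate the propagator bound down to $t'=0$.

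\textbf{The step you have not justified.} The $t'$-integration converges only if $\epsilon<\alpha$. Proposition~\ref{prop:semi-2dmhd-sketch} only provides some $\epsilon=\epsilon(C_{NS},\alpha,\kappa)>0$. From the logarithmic hypothesis alone, the Gr\"onwall exponent is of order $C_{NS}$, which need not be below $\alpha$.

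You can get $\epsilon<\alpha$ from the other hypothesis instead. The bound $\|u^{NS}\|_X\le C_{NS}$ gives $s^{-1/2}\|u^{NS}(s)\|_{L^\infty}\le C_{NS}s^{-1+\alpha/2}$ and $(t-s)^{-1/2}\|u^{NS}(s)\|_{L^\infty}\le C_{NS}(t-s)^{-1/2}s^{-\frac{1-\alpha}{2}}$. Both kernels are weakly singular and integrable uniformly on $(0,\bar T]$. A Henry-type Gr\"onwall lemma in that proof therefore gives a bounded factor, so the propagator estimate holds with any $\epsilon\ge 0$, with constants depending on $(C_{NS},\alpha,\bar T)$.

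This same subcriticality is what drives your uniqueness step. It would equally give existence directly, by a contraction on $[0,t_0]$ followed by bounded coefficients on $[t_0,\bar T]$, so your Step 2 could be bypassed.

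\textbf{Two smaller repairs in Step 3.}
\begin{enumerate}
\item The data at time $\tau$ should be $e^{\tau\Delta}H_0$, or equivalently you should truncate the $t'$-integral at $\tau$. Prescribing $H^{lin}(\tau)$ is circular.
\item The claim $K(t)\to0$ in $\mathcal D'$ does not follow from $\|K(t)\|_{L^\infty}\lesssim t^{-1/2+\alpha}$, since that bound blows up for $\alpha<\frac12$. Pair the heat--Duhamel form with a test function instead: $|\langle K(t),\varphi\rangle|\le\|\nabla\varphi\|_{L^1}\int_0^t 2\|u^{NS}(s)\|_{L^\infty}\|H(s)\|_{L^\infty}\,ds\lesssim t^{\alpha}$. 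This holds uniformly for your truncated approximants.
\end{enumerate}
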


\begin{proof}
By Duhamel,
\[
H^{lin}(t)=e^{t\Delta}H_0-\int_0^t e^{(t-s)\Delta}\div\big(
u^{NS}\otimes H^{lin}-H^{lin}\otimes u^{NS}
\big)(s)\,ds.
\]
Set
\[
\mathcal K(t)\coloneqq
t^{\frac{1-\alpha}2}\|H^{lin}(t)\|_{L^\infty}
+t^{\frac{2-\alpha}2}\|\nabla H^{lin}(t)\|_{C^\kappa}.
\]
The free heat term is bounded by $\|H_0\|_{X_0}$ by definition. Applying the
heat estimate \eqref{heat} to the Duhamel term, splitting the time integral as
in the proof of Proposition~\ref{prop-semi}, and using the $X$-bound on
$u^{NS}$ for the Hölder-product estimate, we obtain
\[
\mathcal K(t)\lesssim \|H_0\|_{X_0}
+\int_0^t\Big(s^{-1/2}+(t-s)^{-1/2}\Big)
\|u^{NS}(s)\|_{L^\infty}\mathcal K(s)\,ds.
\]
The additional weighted $L^1_tL^\infty_x$ hypothesis on $u^{NS}$ therefore
gives the same logarithmic control as in Proposition~\ref{prop:semi-2dmhd-sketch},
and the same fractional Gr\"onwall argument yields
\[
\sup_{0<t\leq \bar T}\mathcal K(t)\lesssim_{C_{NS},\bar T}\|H_0\|_{X_0}.
\]
This proves the stated $X$-bound.

Existence follows by approximating $H_0$ with smooth divergence-free data,
solving \eqref{eq:Hlin-linear} classically for the approximants, and using the
uniform $X$-bound above to pass to the limit. Uniqueness follows by applying the
same estimate to the difference of two solutions with the same initial data.
\end{proof}

The next lemma isolates the elementary persistence mechanism for the
$L^\infty$ lower bound.

\begin{lemma} 
\label{lem:linfty-persistence-2dmhd}
Let $X$ be the space introduced in Section~\ref{sec:corrector}, and let
$V\in X$. Suppose $U$ is a vector field on $(0,\bar T]\times\mathbb T^2$ for
which there exist a constant $c_*>0$ and a sequence $t_n\to0+$ such that
\[
\|U(t_n)\|_{L^\infty}\ge c_*t_n^{-1/2}.
\]
Then
\[
\|U(t_n)+V(t_n)\|_{L^\infty}\ge \frac{c_*}{2}t_n^{-1/2}
\]
for all sufficiently large $n$.
\end{lemma}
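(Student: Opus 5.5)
The argument is a direct triangle inequality combined with the fact that membership in $X$ forces a strictly subcritical rate of blowup. By the definition \eqref{def-X} of the norm,
\[
\|V(t)\|_{L^\infty}\le \|V\|_X\, t^{-\frac{1-\alpha}{2}},\qquad t\in(0,\bar T].
\]
We then write $\|U(t_n)+V(t_n)\|_{L^\infty}\ge \|U(t_n)\|_{L^\infty}-\|V(t_n)\|_{L^\infty}$. Inserting the hypothesis and the bound above gives
\[
\|U(t_n)+V(t_n)\|_{L^\infty}\ge t_n^{-1/2}\Big(c_*-\|V\|_X\,t_n^{\alpha/2}\Big).
\]

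Since $\alpha>0$ and $t_n\to0^+$, we have $t_n^{\alpha/2}\to0$. We therefore choose $n_0$ so that $t_n\le\bar T$ and $\|V\|_X t_n^{\alpha/2}\le c_*/2$ for all $n\ge n_0$; explicitly this means $t_n\le \min\{\bar T,(c_*/(2\|V\|_X))^{2/\alpha}\}$, and the condition is vacuous if $V=0$. For every such $n$ the bracket is at least $c_*/2$, which is the claim.

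No step is a genuine obstacle. The only point to check is that the weight $t^{\frac{1-\alpha}2}$ in \eqref{def-X} is strictly weaker than the critical weight $t^{1/2}$. This gap of $t^{\alpha/2}$ is precisely what makes the correctors $w$, or the linear magnetic piece $H^{lin}$ from Proposition~\ref{prop:Hlin-from-data-sketch}, negligible against the critical blowup profile of $u^{NS}$.
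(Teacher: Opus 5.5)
Your proposal is correct and follows the paper's own proof: both use the $X$-norm bound $\|V(t_n)\|_{L^\infty}\le \|V\|_X\,t_n^{-(1-\alpha)/2}=o(t_n^{-1/2})$ together with the reverse triangle inequality. Your explicit threshold on $t_n$, including the requirement $t_n\le\bar T$, is a harmless refinement of the paper's ``for all sufficiently large $n$.''
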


\begin{proof}
Since $V\in X$, we have
\[
\|V(t_n)\|_{L^\infty}\le t_n^{-(1-\alpha)/2}\|V\|_X
= t_n^{-1/2}\big(t_n^{\alpha/2}\|V\|_X\big)
= o(t_n^{-1/2}).
\]
Hence, for all sufficiently large $n$,
\[
\|V(t_n)\|_{L^\infty}\le \frac{c_*}{2}t_n^{-1/2}.
\]
Using the triangle inequality, we obtain
\[
\|U(t_n)+V(t_n)\|_{L^\infty}\ge
\|U(t_n)\|_{L^\infty}-\|V(t_n)\|_{L^\infty}
\ge \frac{c_*}{2}t_n^{-1/2},
\]
as claimed.
\end{proof}

We then state the perturbative 2D MHD result directly in terms of the
initial magnetic stream function.

\begin{proposition} 
\label{prop:2d-mhd-perturbative-sketch}
Let $u^{NS}$ be a 2D Navier--Stokes solution on $[0,\bar T]$ satisfying the
hypotheses of Proposition~\ref{prop:semi-2dmhd-sketch}. Let $X$, $Y$,
$\mathcal X$, and $\mathcal Y$ denote the spaces introduced in
Section~\ref{sec:corrector}. Let $\psi_0\in C^\alpha(\mathbb T^2)$, set
\[
H_0\coloneqq \nabla^\perp\psi_0,
\]
and let $H^{lin}$ solve
\begin{equation}\label{eq:Hlin-sketch}
\partial_t H^{lin}-\Delta H^{lin}
+\div(u^{NS}\otimes H^{lin}-H^{lin}\otimes u^{NS})=0,
\qquad H^{lin}|_{t=0}=H_0.
\end{equation}
Then there exist constants $\delta_*,C_{0,\psi},C_*>0$, depending only on
$(C_{NS},\alpha,\kappa,\bar T)$, such that the following holds. If
\[
\|\psi_0\|_{C^\alpha}\leq \delta_*,
\]
then there exists a unique pair $(w,Z)\in \mathcal X$ with
\[
\|H^{lin}\|_X\leq C_{0,\psi}\|\psi_0\|_{C^\alpha},
\qquad
\|(w,Z)\|_{\mathcal X}\leq C_*\|\psi_0\|_{C^\alpha}^2
\]
satisfying
\[
\begin{split}
\partial_t w-\Delta w+\mathbb P\div\Big(
2u^{NS}\odot w+w\otimes w-H^{lin}\otimes H^{lin}
-2H^{lin}\odot Z-Z\otimes Z
\Big)&=0,\\
\partial_t Z-\Delta Z+\div(u^{NS}\otimes Z-Z\otimes u^{NS})\\
+\div\Big(
w\otimes H^{lin}-H^{lin}\otimes w+w\otimes Z-Z\otimes w
\Big)&=0,\\
(w,Z)|_{t=0}&=(0,0).
\end{split}
\]
Consequently,
\[
u=u^{NS}+w,\qquad B=H^{lin}+Z,
\]
solve the genuine 2D MHD system on $[0,\bar T]\times\mathbb T^2$. Moreover, the
magnetic field is controlled by
\[
\|H^{lin}\|_X+\|Z\|_X\lesssim \|\psi_0\|_{C^\alpha}.
\]
If, in addition, there exist a constant $c_*>0$ and a sequence $t_n\to0+$ such that
\[
\|u^{NS}(t_n)\|_{L^\infty}\ge c_*t_n^{-1/2},
\]
then
\[
\|u(t_n)\|_{L^\infty}\ge \frac{c_*}{2}t_n^{-1/2}
\]
for all sufficiently large $n$.
\end{proposition}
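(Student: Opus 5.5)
The proof proceeds by a contraction argument in $\mathcal X$, built on the semigroup $\mathbb M_{u^{NS}}$ of Proposition~\ref{prop:semi-2dmhd-sketch}, in parallel with Proposition~\ref{w-exists-fixed-point-proposition}.

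\textbf{Step 1: the linear magnetic field.} Proposition~\ref{prop:Hlin-from-data-sketch}, combined with the Remark bound $\|H_0\|_{X_0}\lesssim\|\psi_0\|_{C^\alpha}$, produces $H^{lin}\in X$ with $\|H^{lin}\|_X\le C_{0,\psi}\|\psi_0\|_{C^\alpha}$. Since $u^{NS}$ solves 2D Navier--Stokes, substituting $u=u^{NS}+w$ and $B=H^{lin}+Z$ into the MHD system and subtracting the NSE for $u^{NS}$ and \eqref{eq:Hlin-sketch} for $H^{lin}$ yields exactly the displayed system for $(w,Z)$. Here the MHD induction term $\div(u\otimes B-B\otimes u)$ is written in tensor form, which is equivalent to the stream-function form in \eqref{mhd-2d}, and divergence-freeness of $B$ is preserved.

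\textbf{Step 2: mild formulation and the contraction.} For $(W,G)\in\mathcal X$, set
\[
\Psi(W,G)=\Big(W\otimes W-H^{lin}\otimes H^{lin}-2H^{lin}\odot G-G\otimes G,\; W\otimes H^{lin}-H^{lin}\otimes W+W\otimes G-G\otimes W\Big)
\]
and $\mathcal F(W,G)(t)=-\int_0^t\mathbb M_{u^{NS}}(t,t')\Psi(W,G)(t')\,dt'$. Integrating the bound of Proposition~\ref{prop:semi-2dmhd-sketch} in $t'$, exactly as in Proposition~\ref{w-exists-fixed-point-proposition}, gives $\|\mathcal F(W,G)\|_{\mathcal X}\le C\|\Psi(W,G)\|_{\mathcal Y}$. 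This uses $\epsilon<\alpha$ so that $\int_0^t (t')^{-1+\alpha}(t/t')^\epsilon dt'\lesssim t^{\alpha}$, which matches the $X$ weights. Applying Lemma~\ref{product_X_Y_lemma} to each quadratic term gives
\[
\|\Psi(W,G)\|_{\mathcal Y}\le C\big(\|H^{lin}\|_X^2+\|H^{lin}\|_X\|(W,G)\|_{\mathcal X}+\|(W,G)\|_{\mathcal X}^2\big),
\]
and the corresponding difference estimate carries the factor $\|H^{lin}\|_X+\|(W,G)\|_{\mathcal X}$. Let $\eta=\|\psi_0\|_{C^\alpha}$ and work on the ball of radius $C_*\eta^2$ with $C_*=2CC_{0,\psi}^2$. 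For $\eta\le\delta_*$ small, the map sends this ball into itself and has Lipschitz constant at most $C(C_{0,\psi}\eta+2C_*\eta^2)\le\frac12$. This yields existence and uniqueness of $(w,Z)$ in the ball. A standard parabolic bootstrap then shows the fixed point is a genuine solution: it is smooth for $t>0$, because $u^{NS}$ is smooth for $t>0$, and it attains zero data in the weak sense because of the $X$ weights. The magnetic bound $\|H^{lin}\|_X+\|Z\|_X\lesssim\eta+\eta^2\lesssim\eta$ follows immediately.

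\textbf{Step 3: persistence of blowup.} The lower bound follows from Lemma~\ref{lem:linfty-persistence-2dmhd} with $U=u^{NS}$ and $V=w\in X$.

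\textbf{Main obstacle.} The key step is verifying that the forcing terms involving $H^{lin}$ actually fit the $\mathcal Y$ framework, which was built for $\mathbb M_{u^{NS}}$ acting on data of the form $(\mathbb P\div\phi_u,\div\phi_B)$. The Duhamel integral for the coupled system with right-hand side $\Psi$ must be represented through this semigroup, since the induction part of $\mathbb M$ contains $u^{NS}$ but not $H^{lin}$. Because all $H^{lin}$ terms are placed in $\Psi$ and $\mathbb M$ involves only $u^{NS}$, the representation is consistent. The remaining check is that the singularity $t^{-(1-\alpha)}$ of $H^{lin}\otimes H^{lin}$ near $t=0$ is integrable against the semigroup weight, and this holds precisely because $\alpha>\epsilon$.
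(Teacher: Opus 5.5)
Your proposal is correct and follows the paper's proof essentially step for step: the same remainder system, the same $\Psi=(\Phi_u,\Phi_B)$ and Duhamel map $\mathcal F$ built on $\mathbb M_{u^{NS}}$, the same product-lemma bounds giving a contraction on a ball of radius $\sim\|\psi_0\|_{C^\alpha}^2$, and Lemma~\ref{lem:linfty-persistence-2dmhd} for the lower bound. One point that both you and the paper leave implicit is the condition $\epsilon<\alpha$ needed in the time integration: the stated hypothesis on $u^{NS}$, with a general constant $C_{NS}$ in front of $\log(t/t')$, only gives a Gr\"onwall exponent of size $\sim C_{NS}$, so smallness of $\epsilon$ must come from the small $(\log A)^{-1}$ coefficient of the actual blowup profile, and your phrase ``this holds precisely because $\alpha>\epsilon$'' is an assumption rather than a consequence.
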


\begin{proof}
Substitute
\[
u=u^{NS}+w,\qquad B=H^{lin}+Z
\]
into the genuine 2D MHD system
\[
\partial_t u-\Delta u+\div(u\otimes u-B\otimes B)+\nabla p=0,
\]
\[
\partial_t B-\Delta B+\div(u\otimes B-B\otimes u)=0,
\qquad \div u=\div B=0,
\]
and use that $u^{NS}$ solves the Navier--Stokes equations and $H^{lin}$ solves
\eqref{eq:Hlin-sketch}. The remainder $(w,Z)$ then solves
\[
\partial_t w-\Delta w+\mathbb P\div\Big(
2u^{NS}\odot w+w\otimes w-H^{lin}\otimes H^{lin}
-2H^{lin}\odot Z-Z\otimes Z
\Big)=0,
\]
\[
\partial_t Z-\Delta Z+\div(u^{NS}\otimes Z-Z\otimes u^{NS})
+\div\Big(
w\otimes H^{lin}-H^{lin}\otimes w+w\otimes Z-Z\otimes w
\Big)=0.
\]

Define
\[
\Phi_u(W,Z)\coloneqq W\otimes W-H^{lin}\otimes H^{lin}
-2H^{lin}\odot Z-Z\otimes Z,
\]
\[
\Phi_B(W,Z)\coloneqq W\otimes H^{lin}-H^{lin}\otimes W+W\otimes Z-Z\otimes W,
\]
and set
\[
\Psi(W,Z)\coloneqq (\Phi_u(W,Z),\Phi_B(W,Z))\in\mathcal Y.
\]
We then define
\[
\mathcal F(W,Z)(t)\coloneqq -\int_0^t
\mathbb M_{u^{NS}}(t,t')\big(\Psi(W,Z)(t')\big)\,dt'.
\]

Proposition~\ref{prop:semi-2dmhd-sketch}, together with the same time
integration as in Section~\ref{sec:corrector}, yields
\[
\|\mathcal F(W,Z)\|_{\mathcal X}
\lesssim_{\bar T}
\|\Psi(W,Z)\|_{\mathcal Y}.
\]
The advantage of working with the magnetic field itself, rather than the
stream function $b$, is that the Lorentz-force term closes directly in the
same tensor norm $Y$. By Lemma~\ref{product_X_Y_lemma},
\[
\|\Phi_u(W,Z)\|_Y+\|\Phi_B(W,Z)\|_Y
\lesssim
\|(W,Z)\|_{\mathcal X}^2
+\|H^{lin}\|_X^2
+\|H^{lin}\|_X\|(W,Z)\|_{\mathcal X}.
\]
Hence, after enlarging the implicit constant if needed, there exists
$C_1=C_1(C_{NS},\alpha,\kappa,\bar T)$ such that
\[
\|\mathcal F(W,Z)\|_{\mathcal X}
\leq C_1\Big(
\|(W,Z)\|_{\mathcal X}^2
+\|H^{lin}\|_X^2
+\|H^{lin}\|_X\|(W,Z)\|_{\mathcal X}
\Big).
\]
The same bilinear estimate gives, for any
$(W_1,Z_1),(W_2,Z_2)\in \mathcal X$,
\[
\|\Phi_u(W_1,Z_1)-\Phi_u(W_2,Z_2)\|_Y
+\|\Phi_B(W_1,Z_1)-\Phi_B(W_2,Z_2)\|_Y
\]
\[
\lesssim
\Big(
\|H^{lin}\|_X+\|(W_1,Z_1)\|_{\mathcal X}+\|(W_2,Z_2)\|_{\mathcal X}
\Big)
\|(W_1-W_2,Z_1-Z_2)\|_{\mathcal X},
\]
and therefore
\[
\begin{split}
\|\mathcal F(W_1,Z_1)-\mathcal F(W_2,Z_2)\|_{\mathcal X}
&\leq C_1\Big(
\|H^{lin}\|_X+\|(W_1,Z_1)
\|_{\mathcal X}+\|(W_2,Z_2)\|_{\mathcal X}
\Big)\\
&\quad \cdot\|(W_1-W_2,Z_1-Z_2)\|_{\mathcal X}.
\end{split}
\]

Proposition~\ref{prop:Hlin-from-data-sketch} gives
\[
\|H^{lin}\|_X\leq C_0\|H_0\|_{X_0}.
\]
By the preceding remark, there exists a constant
$C_\psi=C_\psi(\alpha,\kappa,\bar T)>0$ such that
\[
\|H_0\|_{X_0}\leq C_\psi\|\psi_0\|_{C^\alpha}.
\]
Hence
\[
\|H^{lin}\|_X\leq C_0C_\psi\|\psi_0\|_{C^\alpha}.
\]
Set
\[
C_{0,\psi}\coloneqq C_0C_\psi,\qquad
\psi_*\coloneqq \|\psi_0\|_{C^\alpha},\qquad
h\coloneqq \|H^{lin}\|_X,
\]
and
\[
R\coloneqq 2C_1C_{0,\psi}^2\psi_*^2.
\]
If
\[
\psi_*\leq \delta_*\coloneqq \min\{1,(8C_1C_{0,\psi})^{-1}\},
\]
then $h\leq C_{0,\psi}\psi_*$ and, for every
$(W,Z)\in B_{\mathcal X}(0,R)$,
\[
\|\mathcal F(W,Z)\|_{\mathcal X}
\leq C_1\big(R^2+C_{0,\psi}^2\psi_*^2+C_{0,\psi}\psi_*R\big)\leq R,
\]
while for every
$(W_1,Z_1),(W_2,Z_2)\in B_{\mathcal X}(0,R)$,
\[
\begin{split}
&\quad\|\mathcal F(W_1,Z_1)-\mathcal F(W_2,Z_2)\|_{\mathcal X}
\leq C_1(C_{0,\psi}\psi_*+2R)\|(W_1-W_2,Z_1-Z_2)\|_{\mathcal X}\\
&\leq \tfrac12\|(W_1-W_2,Z_1-Z_2)\|_{\mathcal X}.
\end{split}
\]
Thus $\mathcal F$ is a contraction on $B_{\mathcal X}(0,R)$, and Banach's
fixed-point theorem yields a unique solution $(w,Z)\in\mathcal X$ with
\[
\|(w,Z)\|_{\mathcal X}\leq R=2C_1C_{0,\psi}^2\|\psi_0\|_{C^\alpha}^2.
\]
This proves the claim with $C_*\coloneqq 2C_1C_{0,\psi}^2$.

The reconstruction
\[
u=u^{NS}+w,\qquad B=H^{lin}+Z
\]
then solves the genuine 2D MHD system by construction. Moreover,
\[
\|H^{lin}\|_X+\|Z\|_X
\le (C_{0,\psi}+C_*)\|\psi_0\|_{C^\alpha}
\lesssim \|\psi_0\|_{C^\alpha}.
\]
Finally, if
\[
\|u^{NS}(t_n)\|_{L^\infty}\ge c_*t_n^{-1/2}
\]
along some sequence $t_n\to0+$, then the stated lower bound for $u=u^{NS}+w$
follows immediately from Lemma~\ref{lem:linfty-persistence-2dmhd}, since
$w\in \mathcal X\subset X$.
\end{proof}

Therefore, the statement of Theorem \ref{thm-2dmhd} follows from Proposition \ref{prop:2d-mhd-perturbative-sketch} and the 2D NSE blowup profile established in \cite{CDP}.

\bibliographystyle{amsrefs}
\bibliography{NSE}

@article{CDP,
      title={Instantaneous Type I blow-up and non-uniqueness of smooth solutions of the Navier-Stokes equations}, 
      author={Alexey Cheskidov and Mimi Dai and Stan Palasek},
      journal={arXiv preprint arXiv:2511.09556},
      year={2025},
}

@article{Dai2026,
      title={Instantaneous blowup and non-uniqueness of smooth solutions of MHD}, 
      author={Mimi Dai},
      journal={arXiv preprint arXiv:2604.08684},
      year={2026},
}

@article{FaracoLindbergSzekelyhidi2021,
  author    = {Faraco, Daniel and Lindberg, Sauli and Sz{\'e}kelyhidi, L{\'a}szl{\'o} Jr.},
  title     = {Bounded Solutions of Ideal {MHD} with Compact Support in Space-Time},
  journal   = {Arch. Ration. Mech. Anal.},
  volume    = {239},
  number    = {1},
  pages     = {51--93},
  year      = {2021},
  doi       = {10.1007/s00205-020-01570-y},
  url       = {https://doi.org/10.1007/s00205-020-01570-y},
  mrnumber  = {4198715}
}

@article{BLFNL,
  author    = {Bronzi, Anne C. and Lopes Filho, Milton C. and Nussenzveig Lopes, Helena J.},
  title     = {Wild solutions for 2D incompressible ideal flow with passive tracer},
  journal   = {Communications in Mathematical Sciences},
  volume    = {13},
  number    = {5},
  year      = {2015},
  pages     = {1333--1343},
  doi       = {10.4310/CMS.2015.v13.n5.a12},
  mrnumber  = {3344429}
}

@article{CoiculescuPalasek2025,
  title={Non-uniqueness of smooth solutions of the {N}avier--{S}tokes equations from critical data},
  author={Coiculescu, Matei P and Palasek, Stan},
  journal={Inventiones mathematicae},
  pages={1--55},
  year={2025},
  publisher={Springer}
}

@article{DeLellisSzekelyhidi2013,
  author    = {Camillo {D}e~Lellis and Sz{\'e}kelyhidi, L{\'a}szl{\'o}},
  title     = {Dissipative continuous Euler flows},
  journal   = {Inventiones Mathematicae},
  volume    = {193},
  year      = {2013},
  pages     = {377--407},
}

@article{KochTataru2001,
  author = {Koch, H. and Tataru, D.},
  title = {Well-posedness for the {N}avier–-{S}tokes equations},
  journal = {Adv. Math.},
  volume = {157},
  number = {1},
  pages = {22–35},
  year = {2001},
}

\end{document}